\newcommand{\tab}{T_{a,b}}
\newcommand{\R}{\mathbb{R}}
\newcommand{\Z}{\mathbb{Z}}
\newcommand{\ra}{\rightarrow}
\newtheorem{thm}{Theorem}[section]
\newtheorem{lemma}[thm]{Lemma}
\newtheorem{prop}[thm]{Proposition}
\newtheorem{cor}[thm]{Corollary}
\newtheorem{nameconj}{Conjecture}
\title{Absorbing sets of homogeneous subtractive algorithms}
\author{Tomasz Miernowski \and Arnaldo Nogueira}
\date{\today}
\begin{document}

\maketitle

\begin{abstract}
\noindent
We consider homogeneous  multidimensional continued fraction algorithms, in particular a family of maps which was introduced by F. Schweiger.  We prove his conjecture regarding the existence of an absorbing set for those maps. We also establish that their renormalisations are nonergodic which disproves another conjecture due to Schweiger. Other homogeneous algorithms are also analysed including ones which are ergodic. 
\end{abstract}

\section{Introduction} 

\noindent
Multidimensional continued fraction algorithms have become a classical topic in the Theory of Dynamical Systems. There is a wide literature on the metric theory of those maps and a large collection of examples is available in Schweiger \cite{schweiger}. Besides their number theoretical motivation they also appear in dynamical systems, in particular within the renormalisation theory where Poincar\'e first return maps are considered. A seminal example is the so-called Rauzy induction algorithm of interval exchange transformations \cite{rauzy}  which is a central tool to study the ergodic properties of interval exchanges, e.g.  Veech \cite{veech1,veech2}, Nogueira and Rudolph \cite{arnaldo2} and Avila and Forni \cite{avila}.  The Rauzy algorithm relates one interval exchange to a class of interval exchange maps through a suitable induction process. 

The present work concerns the so-called homogeneous subtractive algorithms.   One of the difficulties that one encounters in studying their dynamics  lays in the fact that they admit an infinite invariant measure. Most of the algorithms which have been studied bare ergodic properties, less attention has been paid to nonergodic algorithms having absorbing set. Homogeneous algorithms having absorbing set are quite common maps though. Here we consider a class of maps which are naturally defined and prove that  they have absorbing set.

Among the well known examples of homogeneous maps we enumerate the Euclidean algorithm, the Brun algorithm and the Selmer algorithm.  Quite often when a nontrivial absorbing set occurs a suitable renormalisation of the algorithm is considered forcing its ergodicity, but very little is known about the dynamical properties of the initial homogeneous algorithm. 

Here our purpose is to study the family of transformations defined as follows: Let $\Lambda^n=\{x=(x_1,x_2,\ldots,x_n) :   0\leq  x_1 \leq \ldots \leq x_n\}$ be the set of those points of $\R^n$ whose coordinates are positive and nondecreasing. Let $a,b\geq 1$ be integers. Given any point $x\in\Lambda^{a+b}$, we define 
\begin{equation} 
\label{tab} T_{a,b}(x)=\pi(x_1,\ldots,x_a,x_{a+1}-x_a,\ldots,x_{a+b}-x_a),
\end{equation}
where $\pi$ is a permutation of the coordinates (depending on the point $x$) which arranges $x_1,\ldots,x_a,x_{a+1}-x_a,\ldots,x_{a+b}-x_a$ in ascending order. The map $T_{a,b}:\Lambda^{a+b}\to\Lambda^{a+b}$ is continuous and piecewise linear.

The family $(1.1)$ is not new and contains several well studied maps given by special values of the parameters $a$ and $b$. The first example, corresponding to $a=b=1$, has the following property: If $x=(x_1,x_2)\in\Lambda^2\cap\Z^2$, then there exists $k\geq 1$ such that $T_{1,1}^k(x)=(0,d)$, where $d\geq 1$ is the greatest common divisor of $x_1$ and $x_2$. That explains why it is called the Euclidean algorithm. It is known that for almost every $x\in\Lambda^2$ the corresponding orbit converges to the origin and the map $T_{1,1}$  is ergodic with respect to Lebesgue measure \cite{veech2,arnaldo1}.

Another special case corresponds to $a\geq 2$  and $b=1$ which is called Brun algorithm. Later it will be shown that for almost every $x\in \Lambda^{a+1}$ the corresponding orbit also converges to the origin. In order to study the metric properties of this algorithm, one may project the dynamics onto the set $B=\{ x\in \Lambda^{a} \ :  \ 0\leq x_1\leq\ldots\leq x_{a}\leq 1\}$, which is of finite Lebesgue measure. The new map $S_{a}:B\to B$ is defined through the commutative diagram
$$
\begin{CD}
\mbox{$\Lambda^{a+1}$} @>\mbox{$T_{a,1}$}>> \mbox{$\Lambda^{a+1}$}\\
@VpVV @VVpV\\
\mbox{$B$} @>>\mbox{$S_{a,1}$}> \mbox{$B$}
\end{CD}
$$
where $p$ is the projection $(x_1,x_2,\ldots,x_{a+1})\mapsto (x_1/x_{a+1},x_2/x_{a+1},\ldots,x_{a}/x_{a+1})$.  A result by Schweiger (\cite{schweiger}, p.50) states that $S_{a,1}$ is ergodic with respect to Lebesgue measure. Nevertheless the question of ergodicity of the original map $T_{a,1}:\Lambda^{a+1}\to\Lambda^{a+1}$ remains open.

The third type of map corresponds to $a=1$ and $b\geq 2$ and was considered by Meester and Nowicki \cite{meester-nowicki} and later by Kraaikamp and Meester \cite{kraaikamp-meester}. The map appeared naturally in their study of a parametric percolation model on the lattice $\Z^{b+1}$. They proposed an algorithm for computation of the critical probability of the model. The efficiency of this algorithm may be expressed through the absorbing properties of the set 
$$ 
A=\{ x\in\Lambda^{b+1} \ : \ x_1+x_2+\ldots+x_{b+1}\leq bx_{b+1}\} 
$$ 
under the dynamics of $T=T_{1,b}$. We recall that $T$ is an {\it absorbing set} if almost surely
$$
\bigcup_{n\in \Z}T^n(A) = \Lambda^{b+1}.
$$
To be more precise, if $A$ is absorbing, then the algorithm gives the critical probability of the model in a finite number of steps, for almost every choice of model parameters. The case $b=2$ was first studied in \cite{meester-nowicki}. In \cite{kraaikamp-meester} the results were extended to the case $b\geq 2$.

The general case of the map $T_{a,b}:\Lambda^{a+b}\to\Lambda^{a+b}$ was first studied by Schweiger (see \cite{schweiger}, chapter 9). He noticed that for every $a,b\geq 1$, the corresponding set 
$$
A=\{x\in\Lambda^{a+b} \ : \ x_1+x_2+\ldots + x_{a+b} \leq b x_{a+b}\}
$$ 
was $T_{a,b}$- forward invariant and conjectured that this set should be absorbing under $T_{a,b}$. Moreover, in the case $b\geq 2$, he defined the smaller invariant set 
$$
D=\{x\in\Lambda^{a+b} \ : \ x_1+x_2+\ldots + x_{a+1} \leq  x_{a+2}\}
$$ 
and conjectured that it was absorbing as well.

\begin{nameconj}[Schweiger] Let $a\geq 1$ and $b\geq 2$. Then the corresponding set $D$ is absorbing for $T_{a,b}$. \end{nameconj}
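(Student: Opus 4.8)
The plan is to recast ``$D$ is absorbing'' as a statement about a Lebesgue-null exceptional set and then to kill that set by a quantitative, \emph{non-ergodic} argument. Since $D$ is forward invariant, $\bigcup_{n\in\Z}T_{a,b}^{\,n}(D)=\Lambda^{a+b}$ holds almost everywhere if and only if almost every orbit eventually meets $D$, i.e. if and only if the bad set
\[
N=\bigcap_{n\ge 0}T_{a,b}^{-n}\bigl(\Lambda^{a+b}\setminus D\bigr)
\]
has Lebesgue measure zero. Because $D\subseteq A$ and $A$ is forward invariant, proving $\mathrm{Leb}(N)=0$ simultaneously settles the weaker conjecture that $A$ is absorbing. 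Throughout I would exploit two structural facts: first, that each branch of $T_{a,b}$ is an integer shear followed by a permutation, hence unimodular, so that Lebesgue measure is invariant (this is the infinite invariant measure alluded to above); and second, that $T_{a,b}$ is homogeneous of degree one, so the analysis can be pushed to the compact ``shape'' simplex $\Sigma=\{x\in\Lambda^{a+b}:\ x_1+\dots+x_{a+b}=1\}$ via the projectivised map $\widehat T$.

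The engine of the argument is a summability estimate. Writing $S_n=\sum_i x_i(T_{a,b}^{\,n}x)$, the defining subtraction gives the exact identity $S_{n+1}=S_n-b\,x_a(T_{a,b}^{\,n}x)$, whence
\[
\sum_{n\ge 0}x_a\bigl(T_{a,b}^{\,n}x\bigr)\le \frac{S_0}{b}<\infty
\]
for \emph{every} starting point $x$. Now if $x\in N$ then at each step the inequality defining $\Lambda^{a+b}\setminus D$ reads $x_{a+2}-x_{a+1}<x_1+\dots+x_a\le a\,x_a$, so the gap between the $(a{+}1)$-st and $(a{+}2)$-nd coordinates is also summable along the orbit:
\[
\sum_{n\ge 0}\Bigl(x_{a+2}\bigl(T_{a,b}^{\,n}x\bigr)-x_{a+1}\bigl(T_{a,b}^{\,n}x\bigr)\Bigr)<\infty .
\]
In particular $x_a$ and the gap $x_{a+2}-x_{a+1}$ both tend to $0$ along every bad orbit, so the projectivised orbit $\widehat T^{\,n}(x)$ accumulates only on the invariant ``degenerate'' locus $\Delta_0=\{x_1=\dots=x_a=0,\ x_{a+1}=x_{a+2}\}\subseteq\Sigma$, which is of positive codimension and hence Lebesgue-null.

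It therefore remains to prove that the set of points whose forward shape-orbit is attracted to $\Delta_0$ is itself null; this is the heart of the matter and the step I expect to be hardest. The natural temptation --- to invoke ergodicity of $\widehat T$ to conclude that the invariant set $N$ is null --- is \emph{unavailable}, precisely because one of the other results of this paper is that the renormalised maps are nonergodic. Instead I would analyse $\widehat T$ as a piecewise-projective Markov system (the pieces indexed by the sorting permutation $\pi$), establish bounded distortion on cylinders, and show that the cylinders which shadow $\Delta_0$ have geometrically decaying measure; equivalently, one sets up a first-return map to a fixed neighbourhood of $\Delta_0$ and proves it is uniformly expanding transversally, so that the trapped set is null. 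The principal obstacles are the \emph{neutral} behaviour of $\widehat T$ along $\Delta_0$ (where the contraction degenerates, so hyperbolicity must be recovered from the summable-gap estimate rather than assumed) and the bookkeeping of the permutation branches, which must be controlled uniformly in the parameters $a$ and $b\ge 2$. A complementary route, which I would pursue in parallel, is to use conservativity of the invariant Lebesgue measure: Poincar\'e recurrence forces almost every orbit to return infinitely often to any fixed set on which the gap $x_{a+2}-x_{a+1}$ is bounded below, contradicting its summability on $N$ and thereby confining $N$ to the dissipative part, which homogeneity then shows to be null as well.
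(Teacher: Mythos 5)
Your reduction contains a genuine gap, and it sits exactly where the real difficulty of the conjecture lies. From the identities $S_{n+1}=S_n-b\,x_a(T_{a,b}^{\,n}x)$ and the definition of $\Lambda^{a+b}\setminus D$ you correctly get that $\sum_n x_a(T_{a,b}^{\,n}x)<\infty$ and $\sum_n\bigl(x_{a+2}(T_{a,b}^{\,n}x)-x_{a+1}(T_{a,b}^{\,n}x)\bigr)<\infty$ on the bad set $N$. But these are statements about the \emph{absolute} (cone) coordinates, and they imply that the \emph{projectivised} orbit approaches your degenerate locus $\Delta_0$ only when $S_n=\sigma(T_{a,b}^{\,n}x)$ stays bounded away from zero. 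If $S_n\to 0$, i.e.\ the orbit converges to the origin, then $x_a^{(n)}\to 0$ tells you nothing about $x_a^{(n)}/S_n$: one can have $S_n$ decaying geometrically while the projectivised coordinates stay in the bulk of the simplex, perfectly consistently with your summability bounds. The Brun case $b=1$ shows this is not a hypothetical pathology: there almost every orbit converges to the origin while the projectivised map is ergodic with a positive invariant density, so the shape-orbit equidistributes rather than degenerating. Consequently $N$ is \emph{not} (mod null sets) contained in the basin of $\Delta_0$, and the set you still have to kill is precisely the set of points outside $A$ whose orbit converges to the origin. That is the heart of the matter, and your plan never addresses it; the paper's entire machinery (a first-return map $P$ to the region $\Theta=\{x\in\Lambda^{a+b}\setminus A:\,2x_a\geq x_{a+b}\}$, the fact that its cylinders are full, a Jacobian estimate on the projected simplex based on comparing column sums of the nonnegative unimodular inverse matrices, yielding a uniform proportion $\alpha>0$ of every cylinder absorbed into $\Gamma=(\Lambda^{a+b}\setminus A)\cap T_{a,b}^{-1}(A)$ at every return, hence geometric decay of the trapped set) exists exactly to handle this case. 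The conjecture is then deduced from the limit theorem, whose proof needs a further Fubini/exceptional-set argument to rule out limits with $x_{a+2}^{\infty}=0<x_{a+b}^{\infty}$ when $b\geq 3$.

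Your ``complementary route'' is also unavailable. Lebesgue measure is not $T_{a,b}$-invariant: every one of the $\binom{a+b}{a}$ branches is full and unimodular, so the preimage of a finite-measure set has $\binom{a+b}{a}$ times its measure; and, more fatally, the dynamics is almost everywhere dissipative with respect to Lebesgue measure --- the paper proves that a.e.\ orbit converges to a nonzero limit point (and even that the renormalised map $S_{1,2}$ is totally dissipative), so there is no conservativity and no Poincar\'e recurrence to invoke. Your final step, ``confining $N$ to the dissipative part, which homogeneity then shows to be null,'' asserts the opposite of the truth: the dissipative part is, almost surely, everything.
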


In order to prove this conjecture, we first establish an extension of the Kraaikamp-Meester-Nowicki theorem.

\begin{thm} \label{A} Let $a\geq 1$ and $b\geq 2$. The set $A$ is absorbing for $T_{a,b}$.  \end{thm}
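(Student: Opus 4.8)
The plan is to reduce membership in $A$ to the sign of the single functional
$g(x)=x_1+\cdots+x_{a+b}-b\,x_{a+b}$, so that $A=\{g\le 0\}$, and to show that $g$ is a monovariant for $\tab$. A direct computation distinguishes two regimes. If $x_{a+b}\ge 2x_a$ the new maximum is $x_{a+b}-x_a$, the map merely shifts the top block, and $g(\tab x)=g(x)$; if $x_{a+b}<2x_a$ the new maximum is $x_a$ and $g(\tab x)-g(x)=b(x_{a+b}-2x_a)<0$. Hence $g(\tab^{\,n}x)$ is nonincreasing along every orbit. Since the total mass transforms by $\sum_i(\tab x)_i=\sum_i x_i-b\,x_a$, the sum strictly decreases and therefore converges, forcing $x_a^{(n)}\to 0$ and so $x_1^{(n)},\dots,x_a^{(n)}\to 0$: every orbit accumulates on the sub-cone $\Pi=\{x_1=\cdots=x_a=0\}$, which is fixed pointwise by $\tab$.

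These two facts pin down the orbits that never reach $A$. On $\Pi$ one has $g=\sum_{i=a+1}^{a+b-1}x_i-(b-1)x_{a+b}\le 0$, with equality exactly on the diagonal ray $R=\{x_1=\cdots=x_a=0,\ x_{a+1}=\cdots=x_{a+b}\}$. If an orbit stays in $A^c$ then $g(\tab^{\,n}x)$ decreases to a limit $L\ge 0$; but every accumulation point lies on $\Pi$ and $g$ is continuous, so $L\le 0$ and thus $L=0$, and every accumulation point lies on $R$. Consequently a bad orbit must converge projectively to the neutral ray $R$ with $g(\tab^{\,n}x)\downarrow 0^{+}$. Theorem \ref{A} is therefore equivalent to the assertion that the set of points whose forward orbit is attracted to $R$ while remaining in $A^c$ is Lebesgue-null.

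This last, measure-theoretic, step is the heart of the matter, and it is where the indifferent character of the algorithm makes things delicate. Near $R$ the map is in the first regime: it fixes the bottom block and translates the top block downwards by $x_a$, so an orbit drifts along $R$ for about $1/x_a$ steps with $g$ frozen before it ``resets.'' To finish I would induce $\tab$ on a region bounded away from $R$ and estimate the transverse behaviour at the resets; projectively, renormalising by the new maximum expands both the bottom coordinates and the gaps within the top block, so $R$ repels and the set of parameters producing infinitely many resets without $g$ ever crossing $0$ should be a Cantor set of measure zero. The induction would be anchored at $a=1$, the Kraaikamp--Meester--Nowicki theorem, using that on the face $\{x_1=0\}$ the map $\tab$ coincides with $T_{a-1,b}$. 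The main obstacle is exactly this distortion estimate at $R$: since the multiplier there equals $1$ the repulsion is only parabolic, and quantifying it sharply enough to conclude that the attracted set is \emph{null}, rather than merely the support of the infinite invariant measure, is the crux of the proof.
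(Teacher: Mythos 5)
Your first half is correct, and in fact it reproduces the paper's own starting point: the two-regime computation for $g(x)=\sigma(x)-bx_{a+b}$ (frozen when $x_{a+b}\geq 2x_a$, strictly decreasing otherwise) is precisely the paper's lemma that the complement of $A$ is invariant off the set $\Theta=\{x\notin A : 2x_a\geq x_{a+b}\}$, your "resets" are exactly the visits to $\Theta$, and the conclusion that an orbit trapped outside $A$ must converge to a point where $g$ vanishes is the content of Section 4 (Lemma \ref{sum} and Proposition \ref{origin}). But from there on you do not prove the theorem: the assertion that the trapped set is Lebesgue-null \emph{is} Theorem \ref{A}, and your final paragraph explicitly defers it ("should be a Cantor set of measure zero", "the main obstacle is exactly this distortion estimate", "the crux of the proof"). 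No inducing scheme is constructed, no distortion bound is proved, and no measure inequality is established, so what you have is a correct reduction followed by a plan. There is also a genuine error in the last step of the reduction: almost every trapped orbit converges to the \emph{origin} (Proposition \ref{origin}, together with the fact that $x_{a+1}^{\infty}=0$ almost surely), and the origin is the apex of $R$; convergence to the origin places no constraint whatsoever on the radial (projective) behaviour of the orbit. Hence the trapped set is not "the set of points attracted projectively to $R$", and an analysis of parabolic repulsion transverse to $R$ would be aimed at the wrong set.

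For comparison, the paper's route to the missing estimate sidesteps the neutral behaviour near $R$ entirely. One shows that almost every orbit outside $A$ visits $\Theta$, so the return map $P:\Theta\to\Theta$ conditioned on $\Gamma=\{x\notin A : \tab(x)\in A\}$ is a.e.\ defined, and its cylinders are \emph{full} (Proposition \ref{part-full}) because the cylinders of $\tab$ itself are full. Projecting to the simplex $\Delta=\{\sigma(x)=1\}$, every $x\in\tilde{\Theta}$ satisfies $x_{a+b}\geq 1/(a+b)$ and hence $x_a\geq 1/(2(a+b))$, so $\tilde{\Theta}$ is uniformly bounded away from the faces where the inverse-branch Jacobians $J=1/\bigl(K(c_1x_1+\cdots+c_{a+b}x_{a+b})^{a+b}\bigr)$ degenerate; together with the column-sum inequality (\ref{claim}) this gives a distortion bound depending only on $a+b$, uniform over all cylinders of all levels, hence a uniform proportion $\alpha>0$ of every cylinder lands in $\tilde{\Gamma}$ (Proposition \ref{ratio}), and $Leb(\tilde{\Theta}\setminus\tilde{P}^{-n}(\tilde{\Gamma}))$ decays geometrically. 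The arbitrarily long excursions with $g$ frozen --- your "drift along $R$" --- are collapsed into single steps of $P$, which is why no parabolic estimate is ever needed; and the argument is uniform in $a$, so no induction anchored at the Kraaikamp--Meester--Nowicki case $a=1$ is required. Your instinct that one must induce on a region and control distortion there is sound, but the correct inducing region is $\Theta$ (where the last subtraction was large), not a neighbourhood of $R$.
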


Analysing the limiting behavior of the orbits, we  prove a stronger claim.

\begin{thm} \label{limit} 
Let $a\geq 1$ and $b\geq 2$. Then, for almost every $x\in\Lambda^{a+b}$,  
$$\lim_{k\to\infty} T^k_{a,b}(x)=(0,\ldots,0,x_{a+2}^{\infty},\ldots,x_{a+b}^{\infty}),$$
where $0<x_{a+2}^{\infty} \leq \ldots\leq x_{a+b}^{\infty}$. 
\end{thm}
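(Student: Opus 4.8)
The plan is to combine the fact that $A$ is absorbing (Theorem~\ref{A}) with a careful analysis of which coordinates survive in the limit, organised as an induction on $b$. Throughout write $x^{(k)}=\tab^k(x)$ and $\Sigma^{(k)}=x_1^{(k)}+\cdots+x_{a+b}^{(k)}$. First I would record two elementary structural facts. On each linearity domain $\tab$ is an integer matrix of determinant $\pm1$ (a shear subtracting $x_a$ from the top $b$ coordinates, followed by a permutation), so $\tab$ preserves Lebesgue measure; consequently any coordinate hyperplane such as $\{x_a=0\}$, together with all its $\tab$-preimages, is null, a fact I will use repeatedly to discard boundary behaviour. Secondly, $\Sigma^{(k+1)}=\Sigma^{(k)}-b\,x_a^{(k)}$, so $\Sigma^{(k)}$ decreases to a limit and $\sum_k x_a^{(k)}<\infty$; in particular $x_a^{(k)}\to0$ and hence $x_1^{(k)},\dots,x_a^{(k)}\to0$. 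By Theorem~\ref{A} I may assume $x\in A$, where $\Sigma\le b\,x_{a+b}$.

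Next I would pin down the largest coordinate. Once $x_a^{(k)}<\tfrac12 x_{a+b}^{(k)}$ (which holds eventually) the largest new value is $x_{a+b}^{(k)}-x_a^{(k)}$, so $x_{a+b}^{(k+1)}=x_{a+b}^{(k)}-x_a^{(k)}$, and $\psi^{(k)}:=b\,x_{a+b}^{(k)}-\Sigma^{(k)}$ is then constant along the orbit. Since $\psi\ge0$ on $A$ and the null set $\{\psi=0\}$ has null $\tab$-preimages, for a.e.\ $x$ one has $\psi\equiv c>0$, whence $x_{a+b}^{(k)}\to(\Sigma^\infty+c)/b>0$: the orbit does \emph{not} converge to the origin and the largest coordinate survives. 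A complementary ``freezing'' argument shows that at least $a+1$ coordinates vanish. Indeed, if $\liminf x_{a+1}^{(k)}>0$ then for large $k$ every subtracted value $x_{a+j}^{(k)}-x_a^{(k)}$ exceeds each of $x_1^{(k)},\dots,x_a^{(k)}$, so the $a$ smallest coordinates are never touched and $x_a^{(k)}$ is eventually constant; combined with $x_a^{(k)}\to0$ this forces $x_a^{(k)}=0$ for some $k$, a null event. Hence a.e.\ $x_{a+1}^{(k)}\to0$.

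It remains to prove the sharp statement that \emph{exactly} $a+1$ coordinates vanish, i.e.\ $x_{a+2}^\infty>0$; this is the heart of the matter and is equivalent to the orbit eventually entering $D$, which is Schweiger's conjecture. I would argue by induction on $b$. The base case $b=2$ is immediate because there $A=\{x_1+\cdots+x_{a+1}\le x_{a+2}\}=D$, so Theorem~\ref{A} already places the orbit in $D$; in $D$ one checks $x_{a+2}\ge x_a+x_{a+1}\ge 2x_a$, so the bottom $a+1$ coordinates evolve exactly under the Brun map $T_{a,1}$ (hence tend to $0$ by the Brun convergence recalled in the introduction) while $x_{a+2}^{(k)}$ decreases by $x_a^{(k)}$ and converges to the block-invariant value $x_{a+2}-(x_1+\cdots+x_{a+1})\ge0$, which is $>0$ off the null boundary. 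For the inductive step, suppose $s$ of the top $b$ coordinates vanish with $2\le s\le b-1$ (the case $s=b$ being excluded by survival of the maximum). Once the $b-s$ surviving coordinates are bounded away from the vanishing block, the map restricted to the smallest $a+s$ coordinates is precisely $T_{a,s}$, and the present hypothesis says this whole block tends to $0$. But $s<b$, so by the induction hypothesis a $T_{a,s}$-orbit has exactly $a+1$ vanishing coordinates for a.e.\ point of $\Lambda^{a+s}$; the event that all $a+s$ of them vanish is therefore $T_{a,s}$-null.

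The \textbf{main obstacle} is to convert this last ``null for the sub-system'' into ``null for $\tab$''. The difficulty is that the reduction to $T_{a,s}$ only takes effect after a point-dependent time $K(x)$ at which the surviving coordinates separate from the block, so one cannot appeal to a single change of variables. I would handle this by writing the exceptional set as a countable union over $K$ of sets of the form $\tab^{-K}(\mathcal{B}_K)$, where $\mathcal{B}_K$ consists of points whose smallest $a+s$ coordinates lie in the $T_{a,s}$-exceptional set; a Fubini argument over the $b-s$ surviving coordinates shows each $\mathcal{B}_K$ is null, and measure preservation of $\tab$ keeps the preimages null. This leaves only $s=1$, that is $x_{a+1}^{(k)}\to0$ and $x_{a+2}^\infty>0$; the block structure then holds from some time on, the surviving coordinates $x_{a+2}^{(k)},\dots,x_{a+b}^{(k)}$ are monotone with summable decrements and hence converge to positive limits, and the remaining coordinates are squeezed to $0$, giving the stated limit.
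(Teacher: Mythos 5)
Your proposal is correct and follows essentially the same route as the paper's own proof: reduce to $A$ via Theorem \ref{A}, show the top coordinate survives almost surely (your quantity $\psi^{(k)}=b\,x_{a+b}^{(k)}-\sigma(x^{(k)})$ is exactly the content of the paper's Lemma \ref{sigma+}), and kill each exceptional event $\{x_{a+s}^{\infty}=0<x_{a+s+1}^{\infty}\}$ by observing that after a point-dependent time the bottom $a+s$ coordinates follow a $T_{a,s}$-orbit converging to the origin --- a sub-system null event pulled back through a countable union over the starting time, Fubini, and piecewise measure preservation, which is precisely the paper's treatment of its sets $E_{b-i}$. One small repair: your parenthetical claim that $x_a^{(k)}<\frac12 x_{a+b}^{(k)}$ ``holds eventually'' is circular as written (it can fail only along orbits tending to the origin, the very event you are excluding); it is fixed by noting that $x_{a+b}^{(k+1)}\geq x_{a+b}^{(k)}-x_a^{(k)}$ makes $\psi^{(k)}$ nondecreasing along the \emph{entire} orbit, so an orbit in $A$ tending to the origin forces $\psi^{(0)}=0$, a null condition --- which is in effect the paper's proof of Lemma \ref{sigma+}.
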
 

As an immediate corollary, we get that Conjecture 1 holds.
\\

Schweiger also attempted to study the ergodic properties of the map $T_{a,b}$. To this end, he used the same projection as in the case of the Brun algorithm. The new transformation $S_{a,b}:B\to B$ is defined through the commutative diagram
$$
\begin{CD}
\mbox{$\Lambda^{a+b}$} @>\mbox{$\tab$}>> \mbox{$\Lambda^{a+b}$}\\
@VpVV @VVpV\\
\mbox{$B$} @>>\mbox{$S_{a,b}$}> \mbox{$B$}
\end{CD}
$$
where $B=\{ x\in \Lambda^{a+b-1} \ :  \ 0\leq x_1\leq\ldots\leq x_{a+b-1}\leq 1\}$ and $p$ is the projection $(x_1,x_2,\ldots,x_{a+b})\mapsto (x_1/x_{a+b},x_2/x_{a+b},\ldots,x_{a+b-1}/x_{a+b})$. Let $D$ stand also for the projection of the set $D\subset \Lambda^{a+b}$ onto $B$. This projected set is $S_{a,b}$-forward invariant, just as the original $D$ is for $T_{a,b}$. The transformation $S_{a,b}$ restricted to the set $D$ admits an invariant measure which is absolutely continuous with respect to Lebesgue measure and whose density was explicitely calculated by Schweiger (\cite{schweiger}, p.80). Having this in mind, it is natural to ask about the ergodic properties of $S_{a,b}$ with respect to Lebesgue measure on $D$. The following conjecture was formulated.

\begin{nameconj}[Schweiger] Let $a\geq 1$ and $b\geq 2$. Then the restriction of the map $S_{a,b}$ to the set $D$ is ergodic with respect to Lebesgue measure on $D$.  \end{nameconj}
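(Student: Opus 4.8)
The plan is to treat $S_{a,b}|_D$ as a fibred system in Schweiger's sense and to prove ergodicity by the classical route of bounded distortion combined with an irreducibility (covering) argument for the transfer operator. First I would partition $D$ into the finitely many cylinders on which $S_{a,b}$ is smooth: each branch is the subtraction map of $T_{a,b}$ followed by the reordering permutation $\pi$, projectivised onto $B$, so every branch is a fractional-linear homeomorphism from its cylinder onto a subregion of $D$. I would write the inverse branches explicitly and compute their Jacobians, which come out as the familiar continued-fraction expressions of the form $1/(\text{affine functional})^{a+b}$, the affine functionals being the denominators produced by the projectivisation.

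The second step is to verify the R\'enyi bounded-distortion condition: there is a constant $C$ with the property that, on every cylinder of every order, the Jacobian of the associated inverse branch varies by at most a factor $C$. For compositions of M\"obius branches this reduces to controlling the logarithmic derivatives, and the standard mechanism is that the affine denominators stay uniformly bounded away from zero on the compact set $D$. Together with the fact that the cylinders shrink to points, so that the cylinder partition is generating, bounded distortion yields a Lasota--Yorke / Rychlik inequality for the Perron--Frobenius operator on functions of bounded variation. This forces the space of absolutely continuous invariant densities to be finite dimensional, and in particular shows that Schweiger's explicitly computed density spans a finite-dimensional invariant subspace.

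The decisive step is then to show that this space is one dimensional, equivalently that $S_{a,b}|_D$ has a single ergodic component. Since the branches need not be full for $b \geq 2$, I cannot invoke the folklore full-branch theorem directly; instead I would aim to establish a Markov structure for the cylinder partition and then an irreducibility (aperiodicity) property, namely that for every cylinder $C$ some iterate satisfies $S_{a,b}^{n}(C) \supseteq D$ up to a set of measure zero. Such a covering property forces the support of any invariant density to fill all of $D$ and rules out a nontrivial decomposition of $D$ into forward-invariant pieces, which would complete the proof of ergodicity.

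The hard part will be precisely this last step. The reordering permutation $\pi$ destroys the clean Markov geometry that makes covering automatic for the one-dimensional Euclidean and Brun maps, and concretely I would have to track how the defining inequality of $D$, which in projective coordinates reads $y_1 + \dots + y_{a+1} \leq y_{a+2}$, propagates under iteration of the branches, trying to show that any cylinder eventually spreads across every proper sub-constraint and hence over all of $D$. I expect the obstruction to surface here: should there exist a proper subregion of $D$ that is genuinely forward invariant and is still charged by the invariant density, the covering property fails, the finite-dimensional invariant space is strictly larger than one dimensional, and the argument cannot be closed — an eventuality that would in fact refute the conjecture rather than confirm it.
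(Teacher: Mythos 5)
There is a genuine gap, and it is not merely a missing step: the statement you set out to prove is \emph{false} for $b\geq 3$, and the paper proves exactly that (Theorem \ref{noterg}). The eventuality you flag in your last paragraph is the actual state of affairs, but the obstruction is of a different and more drastic kind than the one you anticipate. You look for a proper forward-invariant subregion of $D$ charged by the invariant density, which would split $D$ into finitely many ergodic pieces and break your covering argument. What actually happens is that $S_{a,b}|_D$ is \emph{totally dissipative}: by Theorem \ref{limit}, for almost every $x\in D$ the orbit converges to a limit point $(0,\ldots,0,x_{a+2}^{\infty},\ldots,x_{a+b-1}^{\infty})$, and for $b\geq 3$ the function $f(x)=x_{a+b-1}^{\infty}$ is a measurable $S_{a,b}$-invariant function which is genuinely nonconstant; on $D$ one computes explicitly
$$
x_{a+b-1}^{\infty}=\frac{x_{a+b-1}-(x_1+\cdots+x_{a+1})}{1-(x_1+\cdots+x_{a+1})}.
$$
So the ergodic decomposition is indexed by a continuum (the level sets of $f$), not by finitely many invariant pieces, and no Markov irreducibility argument can close the gap.

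This also undermines the machinery of your first two steps, independently of the final covering step. The Lasota--Yorke/Rychlik route presupposes, in effect, a conservative setting with a finite-dimensional space of absolutely continuous invariant \emph{probability} densities; here the map is dissipative, Schweiger's explicit invariant density gives an infinite measure, and there is no spectral-gap framework to invoke. Likewise your claim that ``the cylinders shrink to points, so that the cylinder partition is generating'' fails: since almost every orbit converges to a limit with $x_{a+b-1}^{\infty}>0$ varying continuously with $x$, the dynamics is not expanding in all directions and the continuum-valued invariant observable $f$ survives on cylinders of every order. Note finally that the conjecture \emph{is} true in the single case $a=1$, $b=2$ (Theorem \ref{erg}), but the paper's proof there is nothing like your proposal: it exhibits an explicit invertible map $\phi(x_1,x_2)=\bigl(x_1/(1+x_1+x_2),\,x_2/(1+x_1+x_2)\bigr)$ conjugating $S_{1,2}$ on $D$ to the homogeneous Euclidean algorithm $T_{1,1}$ on $\Lambda^2$, and then imports the known ergodicity of $T_{1,1}$; even in this case the map is totally dissipative, so the transfer-operator approach would not have applied.
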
 

This conjecture is more subtle. First, we prove the following theorem that confirms the conjecture in the case $a=1$ and $b=2$. 

\begin{thm} \label{erg} The map $S_{1,2}:B\to B$ is totally dissipative and ergodic with respect to Lebesgue measure. \end{thm}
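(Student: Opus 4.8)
The plan is to treat the map $S_{1,2}$ through its explicit piecewise-projective structure, to read off total dissipativity from Theorem~\ref{limit}, and to establish ergodicity by a bounded-distortion argument after removing the parabolic behaviour at the corner.

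First I would write $S_{1,2}$ down explicitly. Normalising the largest coordinate to $1$, a point of $B$ is a pair $(u,v)$ with $0\le u\le v\le 1$, representing $(u,v,1)\in\Lambda^3$. Since $a=1$, the map keeps $u$ and subtracts it from the two larger entries, so one must reorder the triple $u,\ v-u,\ 1-u$; because $v-u\le 1-u$ always holds there are exactly three admissible orderings, hence three branches, each the projectivisation of an integer matrix of $GL_3(\Z)$ (a shear followed by a permutation). On the region $\{v\ge 2u\}$ the underlying matrix is unipotent, so the corner $(0,0)$ is a \emph{parabolic} fixed point of $S_{1,2}$; it is the only indifferent fixed point, and away from it the map is uniformly expanding. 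In this way $S_{1,2}$ is a piecewise-projective Markov map whose inverse branches are projectivisations of nonnegative matrices, and I would record at the outset that these inverse branches enjoy a finite-range covering property and contract the Hilbert metric.

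For total dissipativity I would invoke Theorem~\ref{limit} with $a=1$, $b=2$: for almost every $x$ one has $T_{1,2}^{k}(x)\to (0,0,x_3^{\infty})$ with $x_3^{\infty}>0$, so the projected orbit $S_{1,2}^{k}(p(x))$ converges to the corner $(0,0)$ of $B$. Consequently, for every compact set $K\subset B\setminus\{(0,0)\}$ and almost every point, the orbit meets $K$ only finitely often. Inserting this into the Hopf decomposition, via the Halmos recurrence criterion for nonsingular maps, no set of positive Lebesgue measure that is bounded away from the corner can be recurrent; hence the conservative part is null and $S_{1,2}$ is totally dissipative.

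The ergodicity is the heart of the matter, and the step I expect to be the main obstacle. Given an invariant set $E$ of positive measure, I would take a Lebesgue density point $x_0$ of $E$ and transport density along full-branch cylinders $[\sigma]$, that is, sets on which a suitable iterate $S_{1,2}^{n}$ restricts to a projective homeomorphism onto $B$: bounded distortion of projective maps makes the density of $E$ in a cylinder shrinking to $x_0$ comparable to the density of $E$ in $B$, which forces $E$ to be co-null. The difficulty, and the reason ergodicity is special to this smallest case rather than a soft general fact, is the parabolic corner: the unipotent branch produces cylinders that shrink only polynomially, on which distortion is not a priori controlled, and dissipativity means orbits are absorbed by the corner so that the customary inducing-and-Kac scheme is unavailable. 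I would get around this by passing to the jump transformation that collapses each maximal run of the parabolic branch into one step. Since under that branch the first coordinate strictly increases, every such run is finite and almost every itinerary contains infinitely many non-parabolic symbols; the jump transformation is therefore defined almost everywhere, is uniformly expanding with bounded distortion, and retains the full covering property. A standard Rényi/Schweiger fibred-system estimate then yields ergodicity of the jump transformation, and the density-point argument above, now carried out with its cylinders, which do shrink to almost every $x_0$, transfers ergodicity back to $S_{1,2}$.
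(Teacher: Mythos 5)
Your treatment of total dissipativity is correct and is essentially the paper's own argument (Theorem~\ref{limit} with $a=1$, $b=2$ forces almost every projected orbit to converge to the corner, and the corner is a null set, so the conservative part vanishes). The gap is in the ergodicity half, and it sits exactly at the step you describe as standard: the claim that the jump transformation has bounded distortion and that its cylinders ``do shrink to almost every $x_0$.'' That claim is false. Work on the absorbing set $D=\{(u,v)\in B : u+v\le 1\}$, where all the asymptotics take place. There $S_{1,2}(u,v)=T_{1,1}(u,v)/(1-u)$, and the change of variables $\phi(x_1,x_2)=\bigl(x_1/(1+x_1+x_2),\,x_2/(1+x_1+x_2)\bigr)$ conjugates $S_{1,2}|_D$ to the homogeneous Euclidean algorithm $T_{1,1}$ on the cone $\Lambda^2$, whose two branches are \emph{linear} maps of determinant $\pm 1$. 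Consequently every cylinder of every rank of $S_{1,2}|_D$ is the $\phi$-image of a polyhedral sector of $\Lambda^2$, i.e.\ a union of full radial fibres running from the corner $(0,0)$ to the hypotenuse $\{u+v=1\}$; and every cylinder of your jump transformation is again one of these cylinders (a run of parabolic symbols followed by the other symbol). Such sets shrink only in the angular direction, never to points, and the $\sigma$-algebra they generate consists, mod null sets, exclusively of radially saturated sets. A Lebesgue-density argument run over these cylinders can therefore show at best that an invariant set is a union of radial fibres --- that is, it proves ergodicity of the angular factor (the Farey map, or after your acceleration the Gauss map), which is the easy part. It has no access whatsoever to the radial (scaling) direction, and excluding invariant sets that are nontrivial radially is precisely the content of the theorem.

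There is also a structural reason your scheme cannot close: the R\'enyi/Schweiger fibred-system machinery you invoke delivers conservativity together with ergodicity, so it cannot apply, even after inducing, to a map you have just shown to be totally dissipative. What ergodicity of $S_{1,2}$ really amounts to is the ergodicity, with respect to two-dimensional Lebesgue measure, of the infinite-measure-preserving map $T_{1,1}$ --- equivalently, ergodicity of a Maharam-type skew product of the Gauss map by the logarithmic scaling cocycle. This is a genuinely deep statement of infinite ergodic theory; if your soft argument worked, it would reprove it for free. The paper's proof is instead: exhibit the conjugacy $\phi$ above, quote Nogueira's result (Theorem~\ref{euclid}, ergodicity of $T_{1,1}$, proved by cocycle methods, not by bounded distortion), and note that ergodicity on the absorbing set $D$ implies ergodicity on $B$. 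To repair your approach you would have to add an argument controlling the radial direction --- e.g.\ showing the scaling cocycle over the Gauss map has the appropriate essential values --- and that, not the distortion estimate, is the heart of the matter.
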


On the other hand, from Theorem \ref{limit} we deduce that for almost every $x\in D$ we have $\lim_{k\to\infty}S_{a,b}^k(x)=(0,\ldots,0,x_{a+2}^{\infty},\ldots, x_{a+b-1}^{\infty})$. If $b\geq 3$, we may show that $f(x)=x_{a+b-1}^{\infty}$ is a nonconstant invariant function for $S_{a,b}$. This disproves the second Schweiger conjecture in this case.  

\begin{thm} \label{noterg} For $b\geq 3$, the map $S_{a,b}$ is not ergodic with respect to Lebesgue measure. \end{thm}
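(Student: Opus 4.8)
The plan is to derive Theorem~\ref{noterg} directly from Theorem~\ref{limit} by exhibiting a nonconstant $S_{a,b}$-invariant measurable function, which immediately precludes ergodicity. The excerpt essentially hands us the strategy: Theorem~\ref{limit} asserts that for almost every $x\in\Lambda^{a+b}$ the orbit $T_{a,b}^k(x)$ converges to a limit of the form $(0,\ldots,0,x_{a+2}^\infty,\ldots,x_{a+b}^\infty)$ with $0<x_{a+2}^\infty\le\cdots\le x_{a+b}^\infty$. When $b\ge 3$ there are at least two surviving coordinates, so the limit carries genuine projective information that the projection $p$ does not collapse. My task is to transport this limit through the commutative diagram defining $S_{a,b}$ and show the resulting invariant is nonconstant.

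Let me carry this out concretely. First I would fix the projection to the top coordinate: since $S_{a,b}=p\circ T_{a,b}\circ p^{-1}$ (in the appropriate sense made precise by the commutative diagram), and $p$ divides by the largest coordinate $x_{a+b}$, I define on $B$ the function
\[
f(x)=\lim_{k\to\infty} \bigl(S_{a,b}^k(x)\bigr)_{a+b-1},
\]
i.e. the limiting value of the top coordinate of the projected orbit. By Theorem~\ref{limit}, for a.e.\ $x\in\Lambda^{a+b}$ the unprojected orbit converges to $(0,\ldots,0,x_{a+2}^\infty,\ldots,x_{a+b}^\infty)$; projecting by $p$ (normalising by the largest coordinate $x_{a+b}^\infty$) shows that $f(x)=x_{a+b-1}^\infty/x_{a+b}^\infty$ exists and lies in $(0,1]$ for almost every $x\in B$. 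Because $S_{a,b}\circ S_{a,b}^k=S_{a,b}^{k+1}$, the limit is manifestly unchanged under one application of $S_{a,b}$, so $f$ is $S_{a,b}$-invariant almost everywhere. It remains only to verify that $f$ is measurable (as an a.e.\ pointwise limit of continuous maps it is) and, crucially, that $f$ is nonconstant.

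The main obstacle, and the one place where $b\ge 3$ enters essentially, is establishing that $f$ is \emph{nonconstant} on a positive-measure set. Here I would argue that distinct regions of initial conditions in $B$ produce genuinely different limiting ratios $x_{a+b-1}^\infty/x_{a+b}^\infty$. The cleanest route is to observe that for points already lying in the forward-invariant set $D$ (where $S_{a,b}$ acts with the explicit absolutely continuous invariant density computed by Schweiger), the two largest surviving coordinates $x_{a+2}^\infty,\ldots,x_{a+b}^\infty$ are determined by an honest two-or-more-dimensional family of limits, so $f$ takes a continuum of values; by contrast, when $b=2$ there is a single surviving coordinate and $f\equiv 1$ identically, which is exactly why the argument fails there and Theorem~\ref{erg} can hold. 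I would make nonconstancy rigorous either by a symmetry/scaling argument — perturbing the initial ratio $x_{a+b-1}/x_{a+b}$ and tracking that the subtractive dynamics $T_{a,b}$ preserves enough of this ratio in the limit — or, more robustly, by showing the pushforward of Lebesgue measure under $f$ is nonatomic. Once $f$ is a nonconstant invariant measurable function, ergodicity of $S_{a,b}$ with respect to Lebesgue measure fails by definition, completing the proof.
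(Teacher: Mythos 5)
Your overall strategy coincides with the paper's: define $f(x)=\lim_{k\to\infty}\bigl(S_{a,b}^k(x)\bigr)_{a+b-1}$, observe that $f$ is measurable and $S_{a,b}$-invariant, and conclude non-ergodicity once $f$ is shown to be nonconstant. The problem is that you never actually prove nonconstancy. You correctly single it out as ``the main obstacle, and the one place where $b\ge 3$ enters essentially,'' but then offer only two unexecuted sketches: a ``symmetry/scaling argument'' about perturbing the ratio $x_{a+b-1}/x_{a+b}$, or showing that the pushforward of Lebesgue measure under $f$ is nonatomic. Neither is carried out, and neither is obviously within reach as stated: the subtractive dynamics does \emph{not} in any evident way ``preserve enough of this ratio,'' since both coordinates keep having $x_a^{(k)}$ subtracted from them, and nonatomicity of the pushforward is strictly harder than nonconstancy, not easier. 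As written, the crux of the theorem is asserted rather than proved.

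The missing step is a short, concrete computation that the paper performs on the invariant set $D$, where the dynamics decouples. For $x\in D$ one has, by (\ref{2}),
$$T_{a,b}^k(x)=\Bigl(T_{a,1}^k(x_1,\ldots,x_{a+1}),\,x_{a+2}-\sum_{i=0}^{k-1}x_a^{(i)},\ldots,\,x_{a+b}-\sum_{i=0}^{k-1}x_a^{(i)}\Bigr),$$
and by Corollary \ref{brun} the subtracted sums converge almost everywhere to $x_1+\cdots+x_{a+1}$. Working in $B$ (so that $x_{a+b}=1$) this gives the explicit value
$$f(x)=\frac{x_{a+b-1}-(x_1+\cdots+x_{a+1})}{1-(x_1+\cdots+x_{a+1})}$$
for almost every $x\in D$, which is visibly nonconstant precisely when $b\ge 3$, i.e.\ when the index $a+b-1\ge a+2$ is one of the surviving coordinates. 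Since $D$ has positive measure in $B$, this finishes the proof; this is exactly the ingredient your proposal lacks. A minor additional slip: for $b=2$ your function is almost everywhere equal to $0$, not $1$ (the projection $p$ discards the last coordinate, and $x_{a+1}^{\infty}=0$); this does not affect the argument, but it signals that the index bookkeeping needs care when writing the formula above.
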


The same argument shows that for any $a\geq 1$ and $b\geq 2$ the nonprojected algorithm $T_{a,b}:\Lambda^{a+b}\to\Lambda^{a+b}$ is not ergodic with respect to Lebesgue measure. The question of the ergodicity of the projected transformation $S_{a,2}$ for $a\geq 2$ seems more difficult and is not answered here. 

The paper is organized as follows. The next section regards a transformation of $\R^3_+$ which is closely related to the map $T_{1,2}$ of our family. We prove the first Schweiger conjecture in this special case. The Section 3 is devoted to some general remarks concerning the family of transformations $\tab$. Some notations used throughout the text are also introduced. In Section 4, which is highly inspired by the work of \cite{kraaikamp-meester}, we characterize the invariant sets $A$ and $D$ through the limiting behaviour of the orbits. In order to show that the set $A$ is absorbing, we have to study the dynamics of $\tab$ on its complement which is done in Section 5. In particular, we define some smaller set $\Theta$ contained in the complement of $A$ and define a kind of first return map induced by $\tab$ on this set. This is done despite the infinite measure of the sets involved.  A family of partitions of $\Theta$, connected to the iterations of the first return map, is defined. In Section 6, we project the dynamics onto a finite Lebesgue measure set of lower dimension which enables us to study  metric properties of the partitions mentioned before. Those properties are used in Section 7 to prove Theorems \ref{A} - \ref{noterg}. In the last section we define other families of subtractive algorithms and discuss the possibility of extending our results to those new transformations.  
\\

After we concluded the present paper we have been informed that an independent article \cite{fkn} with similar results (Theorem \ref{limit} and \ref{noterg}) is currently accepted for publication by Israel Journal of Mathematics. However, we would like to mention that our paper presents a different approach to the subject and places it in a larger perspective. In particular it allows us to prove Theorem \ref{erg}.


\section{An example}

The present section deals with a particular version of the map $T_{1,2}$ and is independent of the remaining of the paper. It illustrates our geometric approach to the proof of Conjecture 1, which will further be extended to the general case of the map $T_{a,b}$.   

Let  $T:\R^3_+\to\R^3_+$ be given by
$$T(x_1,x_2,x_3)=\left\{\begin{array}{ll} (x_1,x_2-x_1,x_3-x_1) & \text{if} \ \min\{x_1,x_2,x_3\}=x_1  \\ (x_1-x_2,x_2,x_3-x_2) & \text{if} \ \min\{x_1,x_2,x_3\}=x_2 \\ (x_1-x_3,x_2-x_3,x_3) & \text{if} \ \min\{x_1,x_2,x_3\}=x_3 \end{array} \right.$$ 
which is well defined for almost every $x\in\R^3_+$. We will show that the set 
$$A=\{(x_1,x_2,x_3)\in\R^3_+ : x_1+x_2+x_3\leq 2\max\{x_1,x_2,x_3\} \}$$
is absorbing for $T$ which will clearly imply the Conjecture 1 for the map $T_{1,2}$. An argument similar to the one presented below, due to J.-C. Yoccoz, can be found in \cite{levitt}.

Let us introduce the following notation. Let $\mathcal{F}=(f_1,f_2,f_3)$ be a basis of $\R^3$. We define the corresponding positive simplicial cone
$$
\R^3_+(\mathcal{F}) = \{x\in \R^3_+ : x=x_1f_1+x_2f_2+x_3f_3, \ x_1,x_2,x_3\geq 0\} $$
and the analogue of the set $A$ relative to this new basis 
$$A(\mathcal{F}) = \{ x=x_1f_1+x_2f_2+x_3f_3 : (x_1,x_2,x_3)\in A\}.
$$
The set $A(\mathcal{F})$ is equal to the union of three cones $\R^3_+(f_1,f_1+f_2,f_1+f_3)$, $\R^3_+(f_2,f_1+f_2,f_2+f_3)$ and $\R^3_+(f_3,f_1+f_3,f_2+f_3)$. 

A simplicial cone $\R^3_+(f_1,f_2,f_3)\subset \R^3_+$ may be visualized as its intersection with the $2$-dimensional simplex $\Delta=\{x\in\R^3_+ : x_1+x_2+x_3=1\}$. The cone is then represented by the triangle whose vertices are the radial projections of vectors $f_1$, $f_2$ and $f_3$ onto $\Delta$. For convenience we choose to label these vertices with the original coordinates of the vectors $f_i$ rather then their projections.    

In particular, the original set $A$ may be seen as $A(e_1,e_2,e_3)$, where $(e_1,e_2,e_3)$  stands for the canonical basis of $\R^3$. We get that $A$ is a union of three simplicial cones 
$$ A = A(e_1,e_2,e_3) = \R^3_+(e_1,e_1+e_2,e_1+e_3) \cup \R^3_+(e_2,e_1+e_2,e_2+e_3) \cup \R^3_+(e_3,e_1+e_3,e_2+e_3).$$
Its image in the simplex $\Delta$ is the union of three shaded triangles in Figure 1. 

 Let now $x=(x_1,x_2,x_3)$ represent the vector $x$ in the canonical basis of $\R^3$. Consider the cone $\R^3_+(e_1+e_2+e_3,e_2,e_3)$ corresponding to the set of vectors such that $x_1\leq x_2,x_3$. We remark that
$$ x=x_1e_1+x_2e_2+x_3e_3=x_1(e_1+e_2+e_3)+(x_2-x_1)e_2+(x_3-x_1)e_3.$$
This relation means that the map $T$ may be seen as a convenient basis change in $\R^3$ and implies
\begin{equation} \label{relation} T^{-1}(A)\cap \R^3_+(e_1+e_2+e_3,e_2,e_3)=A(e_1+e_2+e_3,e_2,e_3).\end{equation}
The projected version of this relation is depicted in Figure 2. The analogous relations hold in $\R^3_+(e_1+e_2+e_3,e_1,e_3)$ and $\R^3_+(e_1+e_2+e_3,e_1,e_2)$ where $x_2\leq x_1,x_3$ and $x_3\leq x_1,x_2$ respectively. Putting them together yields
$$T^{-1}(A)=A\cup\R^3_+(2e_1+e_2+e_3, e_1+2e_2+e_3,e_1+e_2+2e_3).$$ 

\begin{figure}[h]
\begin{picture}(300,150)(0,5)

\put(20 ,30){\mbox{\includegraphics[scale=0.6]{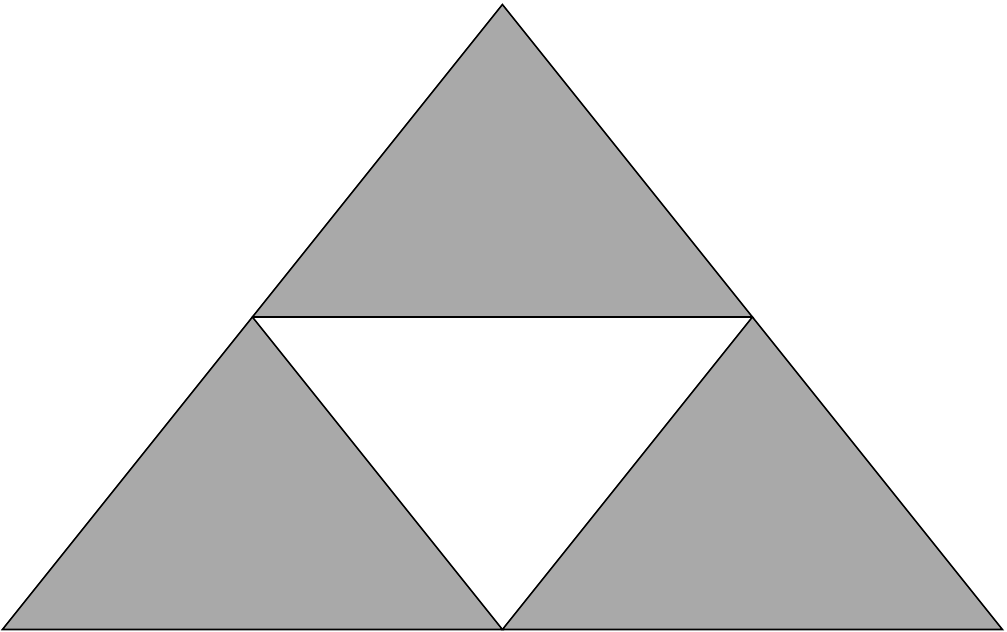}}}

\put(240,30){\mbox{\includegraphics[scale=0.6]{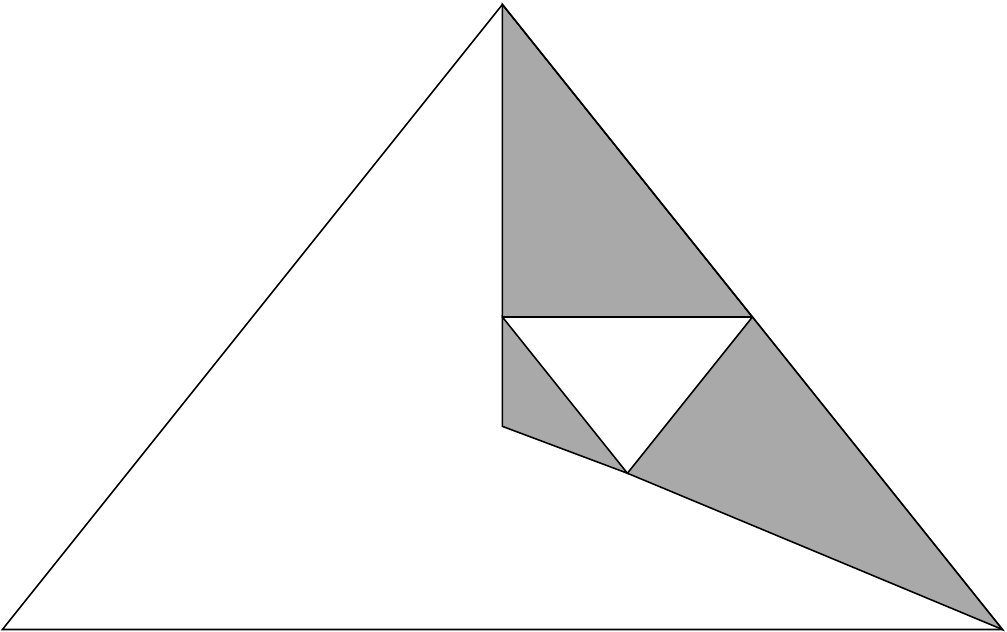}}}

\put(5,20){\mbox{$(1,0,0)$}}
\put(180,20){\mbox{$(0,1,0)$}}
\put(80,0){\mbox{Figure 1}}
\put(310,0){\mbox{Figure 2}}
\put(90,145){\mbox{$(0,0,1)$}}
\put(90,20){\mbox{$(\frac{1}{2},\frac{1}{2},0)$}}
\put(20,85){\mbox{$(\frac{1}{2},0,\frac{1}{2})$}}
\put(155,85){\mbox{$(0,\frac{1}{2},\frac{1}{2})$}}

\put(310,145){\mbox{$(0,0,1)$}}
\put(390,20){\mbox{$(0,1,0)$}}
\put(288,63){\mbox{$(\frac{1}{3},\frac{1}{3},\frac{1}{3})$}}
\put(375,85){\mbox{$(0,\frac{1}{2},\frac{1}{2})$}}
\put(288,83){\mbox{$(\frac{1}{4},\frac{1}{4},\frac{1}{2})$}}
\put(320,45){\mbox{$(\frac{1}{4},\frac{1}{2},\frac{1}{4})$}}
\end{picture}

\end{figure}

\bigskip

In order to study the second inverse image of $T$, we take the cone $\R^3_+(e_1+e_2+e_3,e_2,e_3)$ and divide it into three sub-cones $\R^3_+(e_1+2e_2+2e_3,e_2,e_3)$, $\R^3_+(e_1+2e_2+2e_3,e_1+e_2+e_3,e_3)$ and $\R^3_+(e_1+2e_2+2e_3,e_1+e_2+e_3, e_2)$, which correspond to the choice of the smallest coordinate with respect to the basis $(e_1+e_2+e_3,e_2,e_3)$. We get the following counterpart of the relation (\ref{relation}) 

$$ T^{-2}(A) \cap  \R^3_+(e_1+2e_2+2e_3,e_2,e_3)=A(e_1+2e_2+2e_3,e_2,e_3)$$
and the other two corresponding to the remaining two sub-cones. 

Arguing by induction, one may show that in the $k$th step we obtain the decomposition of $\R^3_+$ into $3^k$ simplicial cones, with disjoint non empty interiors. If $\R^3_+(f_1,f_2,f_3)$ is a cone of the $k$th decomposition, then    
$$T^{-k}(A)\cap \R^3_+(f_1,f_2,f_3)=A(f_1,f_2,f_3).$$
 In the next step, we decompose $\R^3_+(f_1,f_2,f_3)$ into three sub-cones corresponding to three basis $(f_1+f_2+f_3,f_1,f_2)$, $(f_1+f_2+f_3,f_2,f_3)$ and $(f_1+f_2+f_3,f_1,f_3)$.
 
 We may also describe this construction by putting the stress on the complement of the inverse images of $A$. For every $k\geq 0$, the complement in $\R^3_+$ of the set $T^{-k}(A)$ is composed of $3^k$ simplicial cones of disjoint interiors. If $\R^3_+(f_1,f_2,f_3)$ is one of them, then 
 \begin
 {equation} \label{middle} T^{-(k+1)}(A)\cap\R^3_+(f_1,f_2,f_3)=\R^3_+(f_1+f_2,f_2+f_3,f_1+f_3).
 \end{equation}

 \begin{figure}[h]
\begin{picture}(300,150)(0,10)

\put(5,20){\mbox{\includegraphics[scale=0.4]{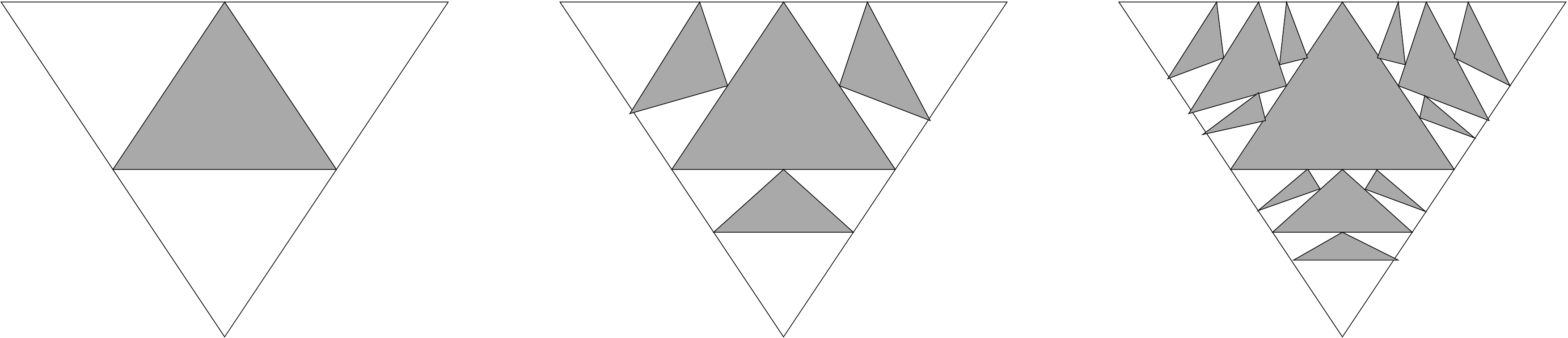}}}



\put(45,10){ \mbox{\footnotesize $(\frac{1}{2},\frac{1}{2},0)$}}
\put(180,0){ \mbox{Figure 3}}
\put(-10,115){ \mbox{\footnotesize $(\frac{1}{2},0,\frac{1}{2})$}}
\put(100,115){ \mbox{\footnotesize $(0,\frac{1}{2},\frac{1}{2})$}}

\put(45,115){ \mbox{\footnotesize $(\frac{1}{4},\frac{1}{4},\frac{1}{2})$}}
\put(-5,60){ \mbox{\footnotesize $(\frac{1}{2},\frac{1}{4},\frac{1}{4})$}}
\put(90,60){ \mbox{\footnotesize $(\frac{1}{4},\frac{1}{2},\frac{1}{4})$}}

\put(160,115){ \mbox{\footnotesize $(\frac{1}{3},\frac{1}{6},\frac{1}{2})$}}
\put(210,115){ \mbox{\footnotesize $(\frac{1}{6},\frac{1}{3},\frac{1}{2})$}}

\put(153,40){ \mbox{\footnotesize $(\frac{1}{2},\frac{1}{3},\frac{1}{6})$}}
\put(225,40){ \mbox{\footnotesize $(\frac{1}{3},\frac{1}{2},\frac{1}{6})$}}

\put(130,75){ \mbox{\footnotesize $(\frac{1}{2},\frac{1}{6},\frac{1}{3})$}}
\put(246,75){ \mbox{\footnotesize $(\frac{1}{6},\frac{1}{2},\frac{1}{3})$}}

\end{picture}\\\

\end{figure}

Figure 3 depicts the first three steps of this construction projected onto $\Delta$ and restricted to the "middle" triangle of vertices $(\frac{1}{2},\frac{1}{2},0)$, $(0,\frac{1}{2},\frac{1}{2})$ and  $(\frac{1}{2},0,\frac{1}{2})$. 
 
 In order to show that the set $A$ is absorbing, we have to show that, for every step $k\geq 0$, the cone $\R^3_+(f_1+f_2,f_2+f_3,f_1+f_3)$ corresponds to a large part of the volume of the cone $\R^3_+(f_1,f_2,f_3)$ in (\ref{middle}). Since both cones are of infinite measure, we compare the areas of their images in $\Delta$. Let $\Delta_{f_1 f_2 f_3}$ be the triangle in $\Delta$ corresponding to the cone $\R^3_+(f_1,f_2,f_3)$. It may be shown (see \cite{arnaldo5}, Lemma 3.2) that  
$$Area(\Delta_{f_1 ,f_2 ,f_3})=\frac{\sqrt{3}}{2}\frac{\vert \det( M_{f_1, f_2 ,f_3})\vert }{\Vert f_1\Vert_1 \Vert f_2\Vert_1 \Vert f_3\Vert_1},$$
where $M_{f_1 ,f_2, f_3}$ is the non negative matrix whose columns are given by the coordinates of the vectors  $f_1$, $f_2$ and $f_3$.

Let $k\geq 0$ and $\Delta_{f_1, f_2, f_3}$ be one of the triangles in the complement of $T^{-k}(A)$. By induction we show that any of the mutual ratios of the numbers $ \Vert f_1\Vert_1, \Vert f_2\Vert_1, \Vert f_3\Vert_1$ does not exceed $k+1$. It is clear in the case of the unique triangle in the complement of $A$ ($k=0$). Assume the assertion is true for the triangle $\Delta_{f_1, f_2, f_3}$ of the complement of $T^{-k}(A)$. This implies the assertion for the three triangles $\Delta_{f_1,f_1+f_2,f_1+f_3}$, $\Delta_{f_2,f_1+f_2,f_2+f_3}$, $\Delta_{f_3,f_1+f_3,f_2+f_3}$  of the complement of $T^{-(k+1)}(A)$. Indeed, for $\Delta_{f_1,f_1+f_2,f_1+f_3}$ we get

$$\frac{\Vert f_1+f_2\Vert_1}{\Vert f_1 + f_3\Vert_1} \leq \frac{\Vert f_1+f_2\Vert_1}{\Vert f_1\Vert_1}=1+\frac{\Vert f_2\Vert_1}{\Vert f_1\Vert_1}\leq k+2.$$

Applying this estimation we get
$$
\frac{Area(\Delta_{f_1+f_2,f_2+f_3,f_1+f_3})}{Area(\Delta_{f_1,f_2,f_3})} =  \frac{2\Vert f_1\Vert_1 \Vert f_2\Vert_1 \Vert f_3\Vert_1}{ \Vert f_1+f_2\Vert_1 \Vert f_2+f_3\Vert_1 \Vert f_1+f_3\Vert_1} \geq \frac{1}{2(k+2)}.
$$
This together with (\ref{middle}) implies that 
$${Area}(\Delta \setminus  T^{-(k+1)}(A) ) \leq  \left(1-\frac{1}{2(k+2)}\right) {Area} (\Delta \setminus  T^{-k}(A) )$$
for every $k\geq 0$. As the series $\sum_{k\geq 0}\frac{1}{2(k+2)}$ diverges, the set $A$ is absorbing for the map $T$.


\section{Some notations and remarks}      

Throughout the remaining part of this paper the following notations are used. All elements of $\R^n$ are given in row notation. The canonical basis of $\R^n$ is denoted by $(e_1,e_2,\ldots,e_n)$ and all vectors of $\R^n$ are always expressed with respect to this basis. Accordingly, the expression $x=(x_1,x_2,\ldots,x_n)$ represents the vector $x$  in the canonical basis. Where matrix multiplications are used, the symbol $x^T$ stands for the transpose of $x$, i.e. the column vector corresponding to $x$. We also define $\sigma(x)$ to be the sum of the coordinates of a vector $x$, that is if $x\in\R^n$ then $\sigma(x)=x_1+\ldots+x_n$. The actual value of $n$ will be clear from the context. 

While studying a transformation $T:\R^n\to\R^n$ we will often write $x^{(k)}$ instead of $T^k(x)$, for $x\in\R^n$ and $k\geq 0$. Accordingly, for any $i=1,\ldots,n$, the symbol $x^{(k)}_i$ will stand for the $i$th coordinate of the point $x^{(k)}$.   

When speaking about a measure, we always mean Lebesgue measure in the corresponding $\R^n$ space. Thus all "almost surely" and "almost every" statements refer to the corresponding Lebesgue measure. 
\\

Let $a,b\geq 1$ be integers. Let $x\in\Lambda^{a+b}$ and define $y\in\R_+^{a+b}$ by
$$y=(x_1,\ldots,x_a,x_{a+1}-x_a,\ldots,x_{a+b}-x_a).$$
For almost every $x$ there exists a unique permutation $\pi_x$ of the set $\{1,2,\ldots,a+b\}$ such that $(y_{\pi_x(1)},y_{\pi_x(2)},\ldots,y_{\pi_x(a+b)})\in\Lambda^{a+b}$. The only ambiguity appears when two or more coordinates of $y$ are equal. In such a case, we define $\pi_x$ to be the only permutation which satisfies $\pi(i)<\pi(j)$ anytime $y_i=y_j$ and $i<j$.  

We may thus redefine the map (\ref{tab}) as the transformation $\tab:\Lambda^{a+b}\to\Lambda^{a+b}$ defined by 
\begin{equation} \label{T}  \tab(x)=(y_{\pi_x(1)},y_{\pi_x(2)},\ldots,y_{\pi_x(a+b)}). \end{equation}
The map $\tab$ is continuous and piecewise linear, where the number of pieces corresponds to the number of different permutations $\pi_x$ involved. Since for every $x\in\Lambda^{a+b}$ we have  $$x_1\leq  x_2\leq \ldots\leq x_a \ \ \ \text{and} \ \ \ \ x_{a+1}-x_a\leq x_{a+2}-x_a\leq \ldots\leq x_{a+b}-x_a,$$ every permutation $\pi_x$ satisfies $\pi_x(1)<\ldots<\pi_x(a)$ and $\pi_x(a+1)<\ldots<\pi_x(a+b)$. The number of permutations is thus equal to the number of different shuffles of a deck of $a$ cards with a deck of $b$ cards, namely $\binom{a+b}{a}$. 

Let $\Pi_{a,b}$ be the set of all these permutations. For every $\pi\in\Pi_{a,b}$ we define the cylinder $\Lambda_{\pi}=\{x\in\Lambda^{a+b}:\pi_x=\pi\}$. The cylinders $\Lambda_{\pi}$ define a partition of the set $\Lambda^{a+b}$ into $\binom{a+b}{a}$ regions, with nonempty and disjoint interiors, which intersect  along null Lebesgue measure sets. We also define a family $\{L_{\pi}, \pi\in\Pi_{a,b}\}$ of matrices in $GL(a+b,\Z)$ such that $T_{a,b}$ restricted to $\Lambda_{\pi}$ is given by $x\mapsto L_{\pi} x^T$. From (\ref{T}) we easily deduce that the line vectors of the matrix $L_{\pi}$ are 
\begin{equation} \label{lines}  e_1,  e_2, \ldots, e_{a-1}, e_a,  e_{a+1}-e_a,\ldots, e_{a+b}-e_a,  \end{equation} 
rearranged in the order given by the corresponding permutation $\pi$. To be more precise, if $L_j$ stands for the $j$th line of the matrix $L_{\pi}$, we have $L_{\pi(i)}={e_i}$ for $i=1,\ldots,a$ and $L_{\pi(i)}=e_i-e_a$ for $i=a+1,\ldots,a+b$.

The map $T_{a,b}$ is not globally injective. The next lemma comes from \cite{schweiger}. 

\begin{lemma}[\cite{schweiger}, p.69]  Every cylinder $\Lambda _{\pi}$ is full, that is $T_{a,b}(\Lambda_{\pi})=\Lambda^{a+b}$. \end{lemma}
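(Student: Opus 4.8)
The plan is to use the structural facts just recalled: on each cylinder $\Lambda_\pi$ the map $\tab$ is the restriction of the linear map $x\mapsto L_\pi x^T$ of $\R^{a+b}$, and $L_\pi$, being a row-permutation of the rows $e_1,\dots,e_a,e_{a+1}-e_a,\dots,e_{a+b}-e_a$, is unimodular, hence a linear bijection of $\R^{a+b}$. A linear bijection is automatically injective, so the only thing to prove is that the image $\tab(\Lambda_\pi)=L_\pi(\Lambda_\pi)$ covers all of $\Lambda^{a+b}$. I would prove this surjectivity by writing down a preimage explicitly for each target and checking that it lies in $\Lambda_\pi$.

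First I would describe the inverse at the level of the vector $y$. Recall that for $x\in\Lambda_\pi$ one forms $y=(x_1,\dots,x_a,x_{a+1}-x_a,\dots,x_{a+b}-x_a)$, whose first $a$ entries are nondecreasing and whose last $b$ entries are nondecreasing, and that $\tab(x)$ is $y$ rearranged into nondecreasing order according to $\pi$. Conversely, given a target $z=(z_1,\dots,z_{a+b})\in\Lambda^{a+b}$, the permutation $\pi$ prescribes into which coordinate of $y$ the value $z_j$ (the $j$th smallest entry of $y$) must be placed, and this reconstructs a unique $y$. I then recover a candidate $x$ by undoing the shear: set $x_i=y_i$ for $1\le i\le a$ and $x_i=y_i+y_a$ for $a<i\le a+b$ (using $x_a=y_a$). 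By construction $L_\pi x^T=z^T$, so $\tab$ sends $x$ to $z$ provided $x$ genuinely lies in $\Lambda_\pi$.

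The core of the argument is to verify that this $x$ belongs to $\Lambda_\pi$, i.e. that $0\le x_1\le\dots\le x_{a+b}$ and that the sorting pattern of the $y$ recomputed from $x$ is again $\pi$. All the needed inequalities come from two inputs: that $z$ is nonnegative and nondecreasing, and that every admissible $\pi$ interleaves two already-sorted blocks, so it assigns the $a$ block-one coordinates and the $b$ block-two coordinates of $y$ to sorted positions in increasing index order. Nonnegativity of $z$ gives $y\ge 0$ and hence $x\ge 0$; the block-order property yields $y_1\le\dots\le y_a$ and $y_{a+1}\le\dots\le y_{a+b}$, which gives monotonicity of $x$ within each block since $x_{i+1}-x_i=y_{i+1}-y_i$ there; and the single crossing inequality holds because $x_{a+1}=y_{a+1}+y_a\ge y_a=x_a$, as $y_{a+1}\ge 0$. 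Finally, since the recomputed $y$ has both blocks sorted and is carried by $\pi$ to the nondecreasing vector $z$, its sorting permutation is $\pi$, so $x\in\Lambda_\pi$ and $\tab(x)=z$.

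Applying this to every $z$ with $0<z_1<\dots<z_{a+b}$ produces preimages lying in the interior of $\Lambda_\pi$, so $\tab(\Lambda_\pi)$ contains the interior of $\Lambda^{a+b}$; as the interior is dense and $\tab$ maps into $\Lambda^{a+b}$, this yields $\tab(\Lambda_\pi)=\Lambda^{a+b}$ in the sense relevant here (up to the null boundary, where one invokes the tie-breaking convention, or equivalently passes to closures using that $L_\pi(\overline{\Lambda_\pi})$ is closed). The step I expect to be the main obstacle is not any single estimate but the bookkeeping: keeping the permutation $\pi$ and its two-block interleaving structure straight while running the chain of inequalities, and handling the coincidences of coordinates on the boundary cleanly; the substantive content is the monotonicity verification in the third paragraph, and the density argument is what upgrades it to the full statement.
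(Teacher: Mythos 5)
Your proof is correct, but there is nothing in the paper to compare it against: the paper does not prove this lemma, it quotes it from Schweiger's book (p.\ 69). What you have written is, in substance, exactly the verification that the paper implicitly relies on in the paragraph following the lemma. The map you build by ``undoing the shear'' --- $x_i=y_i$ for $i\le a$, $x_i=y_i+y_a$ for $i>a$, with $y_i=z_{\pi(i)}$ --- is precisely the inverse branch $M_\pi=L_\pi^{-1}$ whose columns the paper lists in (\ref{columns}), and what you prove is that $M_\pi$ maps $\Lambda^{a+b}$ into $\Lambda_\pi$, which is the content of the paper's unproved assertion that the $M_\pi$ are ``inverse branches $M_{\pi}:\Lambda^{a+b}\to \Lambda_{\pi}$''. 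Your three verifications (nonnegativity; monotonicity within each block, using $\pi(1)<\dots<\pi(a)$ and $\pi(a+1)<\dots<\pi(a+b)$ together with the monotonicity of $z$; and the crossing inequality $x_{a+1}=y_{a+1}+y_a\ge x_a$) are the whole mathematical content, and they are right. One caution about your final paragraph: under the paper's tie-breaking definition of $\pi_x$, exact equality $T_{a,b}(\Lambda_\pi)=\Lambda^{a+b}$ genuinely fails on a null set, so ``invoking the tie-breaking convention'' does not repair the boundary. For instance, with $a=b=1$ and $\pi$ the transposition, a diagonal point $(c,c)$ has the unique candidate preimage $(c,2c)$ under $L_\pi^{-1}$, and the tie-breaking rule assigns that point to the identity cylinder, so $(c,c)$ is not in the image of the strict cylinder $\Lambda_\pi$. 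It is your alternative formulation --- passing to closures, where $L_\pi(\overline{\Lambda_\pi})$ is closed, contains the dense set of targets with distinct positive coordinates, and is contained in $\Lambda^{a+b}$ by continuity --- that gives exact equality; this is also the reading consistent with the paper's own usage, since it treats the cylinders as closed regions which ``intersect along null Lebesgue measure sets''. The distinction is measure-theoretically irrelevant for every use of the lemma in the paper.
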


We may also consider the family of inverse matrices $\{M_{\pi}, \pi\in\Pi_{a,b}\}$, where $M_{\pi}=L_{\pi}^{-1}$, corresponding to all inverse branches $M_{\pi}:\Lambda^{a+b}\to \Lambda_{\pi}$ of the transformation $\tab$. For every permutation $\pi$ the corresponding matrix $M_{\pi}$ is a non-negative element of  $GL(a+b,\Z)$. From (\ref{lines}) we get that the column vectors of $M_{\pi}$ are 
\begin{equation} \label{columns} e_1 ^T,\ldots, e_{a-1} ^T, (e_a+e_{a+1}+\ldots+ e_{a+b})^T, e_{a+1} ^T,\ldots,e_{a+b} ^T, \end{equation}
arranged once again in the order given by $\pi$. If $C_j$ stands for the $j$-th column of $M_{\pi}$, we have $C_{\pi(i)}=e_i ^T$ for $i\neq a$ and $C_{\pi(a)}=(e_a+e_{a+1}+\ldots+e_{a+b})^T$. 

We are interested in the absorbing properties of the following sets
$$ A= \{x  \in \Lambda^{a+b} \ : \   \sigma(x)\leq bx_{a+b} \} \  \text{and} \ D=\{x\in\Lambda^{a+b} \ : \ x_1+x_2 +\ldots+x_{a+1}\leq x_{a+2}\},$$ 
the former being defined for all $a,b\geq 1$ and the latter for $b\geq 2$. In the case $b=1$, the set $A$ is a null measure set. For $b\geq 2$, both sets $A$ and $D$ are infinite measure sets. Moreover $D\subset A$ with $D=A$ if and only if $b=2$. We recall the following result due to Schweiger: 

\begin{lemma}[\cite{schweiger}, p.72 and 78]  The sets $A$ and $D$ are forward invariant for $T_{a,b}$, i.e. $T_{a,b}(A)=A$ and $T_{a,b}(D)=D$. \end{lemma}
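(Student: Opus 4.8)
The plan is to prove each set equality by two inclusions; in fact, for both $A$ and $D$ the defining inequality is \emph{exactly} preserved by $\tab$, which yields forward invariance, while the reverse inclusions follow by exhibiting one explicit inverse branch. I first record two identities for $z=\tab(x)$, valid for every $x\in\Lambda^{a+b}$: writing $y=(x_1,\ldots,x_a,x_{a+1}-x_a,\ldots,x_{a+b}-x_a)$, so that $z$ is $y$ rearranged in ascending order, one has
$$\sigma(z)=\sigma(y)=\sigma(x)-bx_a,\qquad z_{a+b}=\max\{x_a,\ x_{a+b}-x_a\},$$
the first because sorting preserves the sum and the second because $x_a$ is the top entry of the first block and $x_{a+b}-x_a$ of the second. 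For $\tab(A)\subseteq A$, take $x$ with $\sigma(x)\le bx_{a+b}$ and split on the maximum: if $x_{a+b}-x_a\ge x_a$ then $z_{a+b}=x_{a+b}-x_a$ and $\sigma(z)\le bz_{a+b}$ is equivalent to $\sigma(x)\le bx_{a+b}$; if $x_{a+b}-x_a<x_a$ then $z_{a+b}=x_a$ and $\sigma(x)\le bx_{a+b}<2bx_a$ gives $\sigma(z)=\sigma(x)-bx_a<bx_a=bz_{a+b}$. In both cases $z\in A$.

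For $\tab(D)\subseteq D$ I would first pin down the sorted order of $y$ when $x\in D$. The hypothesis $x_1+\cdots+x_{a+1}\le x_{a+2}$ forces $x_{a+2}\ge x_a+x_{a+1}\ge 2x_a$, hence $x_{a+2}-x_a\ge x_a$, so each of the $b-1$ entries $x_{a+2}-x_a\le\cdots\le x_{a+b}-x_a$ of the second block dominates every first-block entry and also $x_{a+1}-x_a$. Therefore the $a+1$ smallest entries of $y$ are exactly $x_1,\ldots,x_a$ together with $x_{a+1}-x_a$, while $z_{a+2}=x_{a+2}-x_a$. Consequently $z_1+\cdots+z_{a+1}=x_1+\cdots+x_a+(x_{a+1}-x_a)$, and after cancelling $x_a$ the inequality $z_1+\cdots+z_{a+1}\le z_{a+2}$ is equivalent to $x_1+\cdots+x_{a+1}\le x_{a+2}$, i.e. to $x\in D$; so $z\in D$.

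For the reverse inclusions I would invoke the fullness of each cylinder (the preceding lemma): the inverse branch $M_\pi$ supplies, for any $w\in\Lambda^{a+b}$, a preimage with $\tab(M_\pi w^T)=w$, and it suffices to find one branch that keeps the set in question. Taking $\pi=\mathrm{id}$, formula (\ref{columns}) gives $x=M_{\mathrm{id}}w^T=(w_1,\ldots,w_a,\ w_a+w_{a+1},\ldots,w_a+w_{a+b})$, and indeed $\tab(x)=w$ since here $y=w$ is already sorted. A direct computation yields $\sigma(x)=\sigma(w)+bw_a$ and $x_{a+b}=w_{a+b}+w_a$, whence $x\in A\Leftrightarrow\sigma(w)\le bw_{a+b}\Leftrightarrow w\in A$; likewise $x_1+\cdots+x_{a+1}=(w_1+\cdots+w_{a+1})+w_a$ and $x_{a+2}=w_{a+2}+w_a$, whence $x\in D\Leftrightarrow w\in D$. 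Thus every point of $A$ (resp. $D$) has a preimage in $A$ (resp. $D$), giving $A\subseteq\tab(A)$ and $D\subseteq\tab(D)$, and with the forward inclusions the equalities $\tab(A)=A$ and $\tab(D)=D$ follow.

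I expect the one genuinely delicate point to be the determination of the sorted order of $y$ in the $D$-case: everything hinges on checking that the constraint defining $D$ is strong enough to fix which $b-1$ coordinates rise to the top, and the inequality $x_{a+2}\ge 2x_a$ extracted from the hypothesis is exactly what drives this. The rest is routine once the two identities for $\sigma(z)$ and $z_{a+b}$, the sorted-order description, and the identity branch are in hand.
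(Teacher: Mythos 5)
Your proof is correct, and it is worth noting that the paper itself offers no argument for this lemma at all: it is quoted directly from Schweiger's monograph (pp.~72 and 78), so your write-up supplies a self-contained verification of what the paper outsources to a citation. Your two key identities $\sigma(z)=\sigma(x)-bx_a$ and $z_{a+b}=\max\{x_a,x_{a+b}-x_a\}$ are right, and the case split on the maximum cleanly handles $\tab(A)\subseteq A$; for $\tab(D)\subseteq D$ you correctly isolate the one delicate point, namely that $x\in D$ forces $x_{a+2}-x_a\geq x_a$, so the top $b-1$ entries of $y$ are exactly $x_{a+2}-x_a,\ldots,x_{a+b}-x_a$ and the sum $z_1+\cdots+z_{a+1}$ telescopes to $x_1+\cdots+x_{a+1}-x_a$. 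For the reverse inclusions, your explicit preimage $x=(w_1,\ldots,w_a,\,w_a+w_{a+1},\ldots,w_a+w_{a+b})$ is precisely the identity inverse branch $M_{\mathrm{id}}$ described by the paper's formula (\ref{columns}), and since $y=w$ is already sorted you do not even need to invoke the fullness lemma (Lemma 3.1) — the verification $\tab(x)=w$ is direct, and the equivalences $x\in A\Leftrightarrow w\in A$, $x\in D\Leftrightarrow w\in D$ follow by the computation you give. Two trivial points you leave implicit but which hold immediately: the constructed preimage does lie in $\Lambda^{a+b}$ (its coordinates are nondecreasing because $w$'s are nonnegative and nondecreasing), and possible ties in the sorting are harmless since the sums and extremal entries involved depend only on the multiset of values of $y$, not on the choice of sorting permutation.
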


Our goal is to show that they are actually absorbing for $T_{a,b}$. 


\section{Limiting behavior of orbits}
 
It is a trivial remark that the orbit of any point $x\in\Lambda^{a+b}$ under the action of $T_{a,b}$ is convergent in $\Lambda^{a+b}$. Indeed, for any $i=1,\ldots,a+b$ fixed, the corresponding coordinate sequence $(x^{(k)}_i)_{k\geq 0}$, is positive and nonincreasing. We will show that the set $A$ attracts almost every point $x\in\Lambda^{a+b}$ for which $x^{(k)}$ do not converge to the origin as $k\to\infty$. Equivalently, almost every point whose entire orbit stays outside of $A$ is characterized by the condition  $x^{(k)}\to (0,\ldots,0)$. In the case $a=1$ and $b\geq 2$ this was shown in \cite{kraaikamp-meester}.

\begin{thm}[\cite{kraaikamp-meester}, Theorem 1 and 2] Let $b\geq 2$. For almost every $x\in\Lambda^{b+1}$ the sequence $T_{1,b}^k(x)$, $k\geq 1$, does not converge to the origin and the set $A$ is absorbing under $T_{1,b}$. \end{thm}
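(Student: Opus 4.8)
The plan is to derive both assertions from a single volume-decay estimate, carried out by the geometric method of Section 2 but now in dimension $b+1$. First I would record the soft facts. For every $x$ each coordinate sequence $(x_i^{(k)})_k$ is nonnegative and nonincreasing, so the orbit converges to a point $x^{(\infty)}$. Since $T_{1,b}$ subtracts $x_1$ from the $b$ largest coordinates, summing yields $\sigma(x^{(k+1)})=\sigma(x^{(k)})-b\,x_1^{(k)}$, whence $\sum_{k\ge 0}x_1^{(k)}=(\sigma(x)-\sigma(x^{(\infty)}))/b\le\sigma(x)/b<\infty$ and $x_1^{(\infty)}=0$. Moreover the largest coordinate drops by at most $x_1^{(k)}$ per step: if $x_{b+1}^{(k)}\ge 2x_1^{(k)}$ the new maximum is $x_{b+1}^{(k)}-x_1^{(k)}$, and otherwise it is $x_1^{(k)}$ with an even smaller drop. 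Summing these gives the crucial lower bound
$$x_{b+1}^{(\infty)}\ \ge\ x_{b+1}^{(k)}-\sigma(x^{(k)})/b\qquad\text{for every }k.$$

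Next I would reduce the first assertion to the second. Since $A$ is forward invariant, "$A$ is absorbing" is equivalent to the set $N:=\Lambda^{b+1}\setminus\bigcup_{k\ge 0}T_{1,b}^{-k}(A)=\{x: x^{(k)}\notin A\ \forall k\}$ being Lebesgue-null. Granting that, almost every $x$ has $x^{(K)}\in A$ for some $K$; as $\partial A$ is null and the inverse branches are integer-affine, $T_{1,b}^{-K}(\partial A)$ is null, so almost every such $x$ actually lands in the interior of $A$, where $\sigma(x^{(K)})<b\,x_{b+1}^{(K)}$. The displayed bound then gives $x_{b+1}^{(\infty)}\ge x_{b+1}^{(K)}-\sigma(x^{(K)})/b>0$, so the orbit does not converge to the origin. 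Thus the first assertion follows from the second, and everything reduces to showing $N$ is null.

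For that, since $T_{1,b}$ is positively homogeneous I would pass to the $b$-dimensional simplex $\Delta$ with its finite area. Exactly as in Section 2, pulling $A$ back by the inverse branches $M_\pi$ of (\ref{columns}) produces at each stage $k$ a decomposition of $\Lambda^{b+1}\setminus T_{1,b}^{-k}(A)$ into finitely many simplicial cones $\R^{b+1}_+(f_1,\dots,f_{b+1})$ with disjoint interiors, and on each such cone the next pullback removes a definite inner sub-cone, the analogue of (\ref{middle}). Using the area formula $\mathrm{Area}(\Delta_{f_1,\dots,f_{b+1}})=c_b\,|\det M_{f_1,\dots,f_{b+1}}|/\prod_i\Vert f_i\Vert_1$, the fraction of area removed from $\Delta_{f_1,\dots,f_{b+1}}$ is bounded below by a product of ratios $\Vert f_i\Vert_1/\Vert f_j\Vert_1$. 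The heart of the matter is then an inductive bookkeeping showing that along the $k$-th decomposition these mutual norm-ratios grow at most polynomially in $k$ (for $b=2$ they are bounded by $k+1$). This forces the area removed at step $k$ to be at least some $c_k$ with $\sum_k c_k=\infty$, so $\prod_k(1-c_k)=0$ and $\mathrm{Area}(\Delta\setminus T_{1,b}^{-k}(A))\to 0$; hence $\bigcup_k T_{1,b}^{-k}(A)$ is of full measure and $N$ is null.

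The genuine difficulty is this last quantitative step for general $b$. In dimension $b+1$ the set $A(\cF)$ is a union of $b+1$ simplicial cones, indexed by the choice of the largest coordinate together with a $(b-1)$-subset of the remaining ones, so the combinatorics of the successive decompositions and the identification of the removed sub-cone are considerably heavier than in the symmetric three-branch picture of Section 2. More seriously, the area-loss series is only borderline divergent: the slowest decay occurs near the diagonal direction $(0,c,\dots,c)$ — exactly the boundary fixed points of $A^c$ toward which the orbits that stay outside $A$ must accumulate (passing $\sigma^{(k)}>b\,x_{b+1}^{(k)}$ to the limit, and using $x_i^{(\infty)}\le x_{b+1}^{(\infty)}$ with $x_1^{(\infty)}=0$, forces all nonzero limit coordinates to coincide) — where for $b=2$ the rate is the harmonic $1/(2(k+2))$. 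Establishing the polynomial norm-ratio bound uniformly in $b$, so that $\sum_k c_k$ still diverges, is therefore the crux; I would expect to obtain it by an induction tracking, for each generator $f_i$, the integer-vector structure inherited from the matrices $M_\pi$.
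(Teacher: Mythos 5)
Your preliminary steps are sound: the monotonicity of the coordinates, the identity $\sigma(x^{(k+1)})=\sigma(x^{(k)})-b\,x_1^{(k)}$, the bound $x_{b+1}^{(\infty)}\ge x_{b+1}^{(k)}-\sigma(x^{(k)})/b$, and the resulting reduction of the non-convergence assertion to the absorption assertion are all correct (this reduction is essentially the argument of Lemma \ref{sigma+}). But the absorption assertion \emph{is} the theorem, and your proposal does not prove it. You reduce it to a volume-decay estimate on the simplex --- at stage $k$ a proportion $c_k$ of the remaining area is absorbed, with $\sum_k c_k=\infty$ --- and then you say explicitly that the decisive step (a polynomial bound on the mutual norm ratios $\Vert f_i\Vert_1/\Vert f_j\Vert_1$ along the successive decompositions, uniformly in $b$) is something you ``would expect to obtain.'' Nothing in the proposal establishes it, so what you have is a plan whose crux is missing, not a proof.

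Moreover, the combinatorial scaffolding the plan rests on already breaks for $b\ge 3$. You assert that $A(\cF)$ is a union of $b+1$ simplicial cones; this holds only for $b=2$. Already for $b=3$, the piece of $A$ on which $x_4$ is the largest coordinate is, in the slice $x_4=1$, the cube $[0,1]^3$ cut by $x_1+x_2+x_3\le 2$: a polytope with seven vertices, not a simplex. Consequently the self-reproducing picture of Section 2 --- each complement cone splits into corner cones of the same type, with the ``middle'' cone of (\ref{middle}) newly absorbed --- has no direct analogue: the cone types proliferate under subdivision, and the inductive norm-ratio bookkeeping you invoke has no clear formulation. This is exactly the difficulty the paper sidesteps: it does not extend Section 2 literally, but imports this statement from \cite{kraaikamp-meester}, and proves its own generalization (Theorems \ref{A} and \ref{limit}, valid for all $a\ge 1$) by a different mechanism --- the first-return map $P$ on $\Theta\subset cA$, the fullness of its cylinders (Proposition \ref{part-full}), and the Jacobian estimate of Proposition \ref{ratio} on the projected simplex --- which yields a stage-independent absorption ratio $\alpha\in(0,1)$ and hence geometric decay, bypassing altogether the borderline harmonic-series estimate on which your approach depends.
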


We begin with results that illustrate the dynamical behavior of homogeneous algorithms.

\begin{lemma} Let $a\geq 1$ and $b\geq 1$. For almost every $x\in\Lambda^{a+b}$ we have  
$$\lim_{k\to+\infty}\tab^k(x)=(x_1^{\infty},x_{2}^{\infty},\ldots,x_{a+b}^{\infty}),$$
with $x_{1}^{\infty}=x_2^{\infty}= \ldots = x_{a+1}^{\infty} = 0$.
 \end{lemma}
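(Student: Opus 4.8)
The plan is to separate the statement into two parts of quite different character: the vanishing of the first $a$ limit coordinates, which I will prove for \emph{every} $x$, and the vanishing of the $(a+1)$-st, which holds only almost everywhere and carries the real content. As a preliminary I record, as already observed in the text, that for each fixed $i$ the sequence $(x_i^{(k)})_{k\geq 0}$ is positive and nonincreasing, so that $\tab^k(x)$ converges to a limit $x^\infty=(x_1^\infty,\ldots,x_{a+b}^\infty)$ with $0\leq x_1^\infty\leq\cdots\leq x_{a+b}^\infty$. The key elementary identity is that one step of $\tab$ subtracts $x_a^{(k)}$ from exactly $b$ of the coordinates, and sorting preserves sums, so $\sigma(x^{(k+1)})=\sigma(x^{(k)})-b\,x_a^{(k)}$. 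Since $\sigma(x^{(k)})$ is nonincreasing and bounded below it converges, and telescoping gives $\sum_{k\geq 0}x_a^{(k)}<\infty$; in particular $x_a^{(k)}\to 0$, whence $x_1^\infty=\cdots=x_a^\infty=0$. This needs no genericity and settles $a$ of the required $a+1$ coordinates.

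The harder point is $x_{a+1}^\infty=0$ almost everywhere, and here I would argue first at the level of a single orbit. Suppose $x_{a+1}^\infty=c>0$. Because $x_a^{(k)}\to 0$, for all large $k$ we have $2x_a^{(k)}<c\leq x_{a+1}^{(k)}$, so the smallest of the $b$ large entries, namely $x_{a+1}^{(k)}-x_a^{(k)}$, still exceeds the largest small entry $x_a^{(k)}$. Thus the sorting permutation $\pi_{x^{(k)}}$ is the identity: the first $a$ coordinates are left untouched and only the last $b$ are decreased by $x_a^{(k)}$. Consequently $x_a^{(k)}$ is \emph{constant} for all large $k$, and combined with $x_a^{(k)}\to 0$ this forces $x_a^{(K)}=0$ for some finite $K$. (Were the constant positive, $x_{a+1}^{(k)}$ would decrease by a fixed positive amount at every step and eventually become negative, which is impossible.) In other words $\tab^K(x)$ lies in $Z=\{x\in\Lambda^{a+b}:x_a=0\}$, at which point the orbit is frozen, since every point with $x_1=\cdots=x_a=0$ is fixed by $\tab$; in particular $x_{a+1}^\infty=x_{a+1}^{(K)}$.

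It follows that the bad set $\{x:x_{a+1}^\infty>0\}$ is contained in $\bigcup_{K\geq 0}\tab^{-K}(Z)$. Since $a\geq 1$, the set $Z$ is contained in a proper linear subspace of $\R^{a+b}$ and is therefore Lebesgue-null. As $\tab$ is piecewise linear, acting on each cylinder $\Lambda_\pi$ by an invertible integer matrix $L_\pi\in GL(a+b,\Z)$, the preimage of any null set is again null: each inverse branch $M_\pi=L_\pi^{-1}$ is an invertible linear map and hence carries null sets to null sets, and there are only finitely many branches. By induction $\tab^{-K}(Z)$ is null for every $K$, so the countable union is null. This gives $x_{a+1}^\infty=0$ almost everywhere and completes the proof.

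The step I expect to be the main obstacle is the middle one: converting the purely asymptotic hypothesis $x_{a+1}^\infty>0$ into the much sharper conclusion that the orbit enters the null set $Z$ in \emph{finite} time. The device that makes this work is the observation that once $x_a^{(k)}$ is small relative to $x_{a+1}^{(k)}$ the map acts through the identity branch and freezes the small coordinates; care is needed to verify that this identity regime is genuinely entered (the strict inequality $x_{a+1}^{(k)}>2x_a^{(k)}$ holds for large $k$ off a null set of ambiguities) and that it is incompatible with $x_a^{(k)}\to 0$ unless $x_a$ has already reached $0$. Once this is in hand, the concluding measure argument is routine.
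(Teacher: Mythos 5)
Your proof is correct, and it takes a genuinely different route from the paper's. The paper discards the null set of points with rationally dependent coordinates, so that every iterate has strictly positive entries; it then runs a Euclidean-type argument (subtract $x_a^{(k)}$ repeatedly from $x_{a+1}^{(k)}$ until the remainder drops below it) to force $x_a^{\infty}=x_{a+1}^{\infty}$, deduces $x_1^{\infty}=0$ from $x_1^{(k+1)}=\min\{x_1^{(k)},\,x_{a+1}^{(k)}-x_a^{(k)}\}$, and finally bootstraps by contradiction to kill $x_j^{\infty}$ for all $j\leq a+1$. You instead obtain the vanishing of the first $a$ coordinates for \emph{every} point, with no exceptional set, from the telescoping identity $\sigma(x^{(k+1)})=\sigma(x^{(k)})-b\,x_a^{(k)}$ --- an identity the paper records only afterwards, in the lemma asserting $\sigma(x)=b\sum_{k\geq 0}x_a^{(k)}$ --- and you treat the $(a+1)$-st coordinate by a freezing argument: if $x_{a+1}^{\infty}>0$, the orbit eventually lies in the identity branch, so $x_a^{(k)}$ becomes constant, hence zero, hence the orbit enters the hyperplane $\{x_a=0\}$ in finite time; the exceptional set is then $\bigcup_{K\geq 0}\tab^{-K}(\{x_a=0\})$, which is null because $\tab$ has finitely many branches, each given by an invertible linear map. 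What each approach buys: yours isolates an explicit, geometrically concrete exceptional set and avoids rational independence altogether, while the paper's stays entirely at the level of coordinate inequalities and needs no measure-theoretic step beyond discarding rational dependencies; the preimage-of-a-null-set device you use is, incidentally, the same one the paper deploys later when it shows the sets $E_{b-i}$ are null. (The freezing mechanism you exploit is implicitly present in the paper's own Euclidean step, where the identity branch is what permits $x_a^{(k)}$ to be subtracted $m$ times in a row.) One minor remark: your hedge about ``a null set of ambiguities'' when entering the identity regime is unnecessary --- since $x_{a+1}^{(k)}\geq x_{a+1}^{\infty}=c$ for all $k$ and $x_a^{(k)}<c/2$ for all large $k$, the inequality $x_{a+1}^{(k)}-x_a^{(k)}>x_a^{(k)}$ is automatically strict there, so no exceptional set is needed at that step.
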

 
\begin{proof} Let $x\in\Lambda^{a+b}$. For every $i=1,\ldots,a+b$, the sequence $(x_i^{(k)})_{k\geq 0}$, is convergent to some $x_i^{\infty}$. We obviously have $0\leq x_1^{\infty}\leq x_2^{\infty}\leq \ldots\leq x_{a+b}^{\infty}$ since at every iteration the coordinates are reordered to be nondecreasing.  

Suppose that the coordinates of the point $x$ are rationally independent. This implies that $x_i^{(k)} > 0$ for all $i=1,\ldots,a+b$ and $k\geq 0$. For every $k\geq 1$ there exists $m\geq 1$ such that $x_{a+1}^{(k)}-mx_a^{(k)}\leq x_a^{(k)}$. If $m$ is the smallest one for which this inequality is satisfied, we get $x_{a+1}^{(k+m)} = x_a^{(k)}$ and the two limits $x_{a}^{\infty}$ and $x_{a+1}^{\infty}$ must be equal. 

Since $x_a^{(k)}$ and $x_{a+1}^{(k)}$ converge to the same limit, for every $\varepsilon >0$ there exists  $k\geq 1$ such that $0\leq x_{a+1}^{(k)}-x_a^{(k)} \leq \varepsilon$. This implies that $x_1^{(k+1)}=\min\{ x_1^{(k)}, x_{a+1}^{(k)}-x_a^{(k)}\}\leq \varepsilon$. We get $x_1^{\infty}=0$. 

Suppose that $x_i^{\infty}>0$ for some $2\leq i\leq a+1$ and let $j=\min\{ 2\leq i\leq a+1 : x_i^{\infty}>0\}$. Since $x_{j-1}^{\infty}=0$, there exists some $k\geq 1$ such that $x_{j-1}^{(k)}\leq x_j^{\infty}/2$ and $x_{a+1}^{(k)}-x_a^{(k)}\leq x_j^{\infty}/2$. This implies $x_j^{(k+1)}=\max\{x_{j-1}^{(k)},x_{a+1}^{(k)}-x_a^{(k)}\}\leq x_j^{\infty}/2$, which is in contradiction with the fact that the sequence $x_j^{(k)}$ is nonincreasing. We get $x_i^{\infty}=0$ for every $i=1,\ldots,a+1$. \end{proof}

Another useful result states the following.

\begin{lemma} \label{sum} Let $a\geq 1$ and $b\geq 1$. Let $x\in\Lambda^{a+b}$ satisfy $x^{(k)}\to 0$ as $k\to +\infty$. Then $\sigma(x)=b \sum_{k\geq 0} x_a^{(k)}$ and thus the sum involved is finite for almost every $x$.  \end{lemma}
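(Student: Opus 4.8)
The plan is to track how the coordinate sum $\sigma$ evolves along a single step of the algorithm and then telescope.

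First I would compute the effect of one iteration on $\sigma$. By definition, $T_{a,b}(x)$ is obtained by keeping the first $a$ coordinates $x_1,\ldots,x_a$, replacing the last $b$ coordinates $x_{a+1},\ldots,x_{a+b}$ by $x_{a+1}-x_a,\ldots,x_{a+b}-x_a$, and then reordering. Since a permutation of the coordinates leaves their sum unchanged, the reordering permutation $\pi_x$ does not affect $\sigma$, and exactly $b$ coordinates are decremented by $x_a$, so
$$\sigma(T_{a,b}(x)) = \sum_{i=1}^a x_i + \sum_{i=a+1}^{a+b}(x_i - x_a) = \sigma(x) - b\,x_a.$$
Applying this identity at the point $x^{(k)}$ gives the recursion $\sigma(x^{(k+1)}) = \sigma(x^{(k)}) - b\,x_a^{(k)}$ for every $k\geq 0$.

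Next I would telescope this recursion. Summing over $k$ from $0$ to $N-1$ yields
$$\sigma(x^{(N)}) = \sigma(x) - b\sum_{k=0}^{N-1} x_a^{(k)}.$$
All terms $x_a^{(k)}$ are nonnegative, since the orbit remains in $\Lambda^{a+b}\subset \R_+^{a+b}$, so the partial sums are monotone and the series $\sum_{k\geq 0} x_a^{(k)}$ converges in $[0,+\infty]$. Under the hypothesis $x^{(k)}\to 0$ we have $\sigma(x^{(N)})\to 0$ as $N\to\infty$, because $\sigma$ is merely the finite sum of the coordinates; letting $N\to\infty$ in the displayed identity then gives $\sigma(x)=b\sum_{k\geq 0} x_a^{(k)}$.

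Finally, the finiteness statement is immediate: $\sigma(x)$ is a finite number for every $x\in\Lambda^{a+b}$, so the identity forces $\sum_{k\geq 0} x_a^{(k)}=\sigma(x)/b<\infty$ whenever the hypothesis holds, in particular for almost every such $x$ (where $T_{a,b}$ and all its iterates are unambiguously defined). I do not expect any genuine obstacle here; the only point that requires a little care is the bookkeeping in the first step, namely verifying that $\pi_x$ preserves $\sigma$ and that precisely $b$ coordinates are reduced by $x_a$, which is what produces the factor $b$ in the conclusion.
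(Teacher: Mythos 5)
Your proof is correct and follows essentially the same route as the paper: the one-step identity $\sigma(T_{a,b}(x))=\sigma(x)-b\,x_a$, telescoping, and passing to the limit using $\sigma(x^{(m)})\to 0$. You simply spell out the bookkeeping (why the permutation preserves $\sigma$ and why exactly $b$ coordinates lose $x_a$) that the paper treats as immediate.
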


\begin{proof} For any $m\geq 1$ we clearly have $\sigma(x^{(m)})=\sigma(x^{(m-1)})-bx_a^{(m-1)}$ which gives $$\sigma(x^{(m)})=\sigma(x)-b\sum_{k=0}^{m-1}x_a^{(k)}.$$ Since $\sigma(x^{(m)})$ goes to zero as $m$ goes to infinity, we get the claim. \end{proof}

The last two lemmas applied to the Brun algorithm $T_{a,1}:\Lambda^{a+1}\to\Lambda^{a+1}$ give 

\begin{cor} \label{brun} Let $a\geq 2$, $b=1$ and write $T_{a,1}^k(x)=x^{(k)}$. Then for almost every $x\in\Lambda^{a+1}$ we get
$$\lim_{k\to +\infty} x^{(k)}=0 \ \ \  \text{and}  \ \ \ \sigma(x)=\sum_{k=1}^{\infty}x_a^{(k)}.$$ \end{cor}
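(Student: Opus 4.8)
The plan is to obtain both assertions by specializing the two preceding lemmas of this section to the Brun case $b=1$, where $a+b=a+1$. The crucial observation is that in this case the list of coordinates forced to vanish in the limit, namely $x_1^{\infty},\ldots,x_{a+1}^{\infty}$, already exhausts all $a+1$ coordinates; so the dichotomy that makes the general $b\geq 2$ theory interesting collapses, and the orbit simply converges to the origin.

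Concretely, I would first invoke the first lemma of this section: for almost every $x\in\Lambda^{a+1}$ the orbit $T_{a,1}^k(x)$ converges to a limit $(x_1^{\infty},\ldots,x_{a+1}^{\infty})$ with $x_1^{\infty}=\cdots=x_{a+1}^{\infty}=0$. Since there is no coordinate of index exceeding $a+1$, this reads $\lim_{k\to\infty}x^{(k)}=0$, which is exactly the first claim; no further argument is needed at this step.

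For the second assertion I would apply Lemma \ref{sum} with $b=1$. Its hypothesis, that $x^{(k)}\to 0$, is met precisely on the full-measure set produced by the previous step, so for almost every $x$ the lemma yields
$$\sigma(x)=\sum_{k\geq 0}x_a^{(k)},$$
which is the stated identity (up to the bookkeeping of the initial index) together with the finiteness of the sum for almost every $x$, a conclusion already contained in Lemma \ref{sum}. The only routine verification is that the two almost-everywhere sets coming from the two lemmas intersect in a set of full Lebesgue measure, on which both conclusions hold simultaneously.

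I do not expect any genuine obstacle in proving the corollary itself, as it is a direct specialization. The delicate content lives upstream, in the proof of the first lemma, where rational independence of the coordinates (a full-measure condition) is used both to keep every coordinate strictly positive and to force the coincidence of the relevant limits; once that lemma is in hand, the Brun case falls out immediately because the index range $\{1,\ldots,a+1\}$ leaves no surviving coordinate.
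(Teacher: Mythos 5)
Your proposal is correct and matches the paper exactly: the paper gives no separate proof of Corollary \ref{brun}, simply stating that it follows from ``the last two lemmas,'' i.e.\ the convergence lemma (all of $x_1^{\infty},\ldots,x_{a+1}^{\infty}$ vanish, which for $b=1$ is the whole vector) together with Lemma \ref{sum} specialized to $b=1$. Your remark about the initial index is also apt --- the corollary's sum starting at $k=1$ versus the lemma's $k\geq 0$ is a typographical inconsistency in the paper itself, not a gap in your argument.
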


We will now describe the generic limiting behavior in the set $A$.

\begin{lemma} \label{sigma+} Let $a\geq 1$ and $b\geq 2$. Then  $\lim_{k\to\infty} \tab^k(x)\neq (0,\ldots,0)$ for almost every $x\in A$. 
\end{lemma}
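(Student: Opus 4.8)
The plan is to exhibit a real-valued functional that never increases along $T_{a,b}$-orbits and is strictly negative precisely on the interior of $A$; the point is that convergence of an orbit to the origin would force this functional to tend to $0$, contradicting its monotonicity. Concretely I would work with
$$g(x) = \sigma(x) - b\,x_{a+b},$$
so that $A = \{g \le 0\}$, its interior is $\{g < 0\}$, and the leftover set $\{g = 0\}$ is the intersection of $\Lambda^{a+b}$ with a single hyperplane (the ordering being fixed on $\Lambda^{a+b}$), hence Lebesgue-null. Thus it suffices to show that no point of $\{g<0\}$ has an orbit converging to $(0,\ldots,0)$.

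The first substantive step is to prove that $g$ is nonincreasing along every orbit, i.e. $g(T_{a,b}x)\le g(x)$ for all $x\in\Lambda^{a+b}$. Writing $y=(x_1,\ldots,x_a,x_{a+1}-x_a,\ldots,x_{a+b}-x_a)$, both ingredients of $g\circ T_{a,b}$ are permutation-invariant, so the reordering (and any tie-breaking) is irrelevant: one has $\sigma(T_{a,b}x)=\sigma(y)=\sigma(x)-b\,x_a$, and the new largest coordinate is $\max(y)=\max\{x_a,\,x_{a+b}-x_a\}$. I would then split into two cases. If $x_{a+b}\ge 2x_a$, the new maximum is $x_{a+b}-x_a$ and a one-line computation gives $g(T_{a,b}x)=g(x)$. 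If $x_{a+b}<2x_a$, the new maximum is $x_a$ and $g(T_{a,b}x)-g(x)=b(x_{a+b}-2x_a)<0$. Either way $g$ does not increase, so $g(x^{(k)})\le g(x)$ for every $k\ge0$.

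The argument then closes by contradiction. Since each coordinate sequence $(x_i^{(k)})_k$ converges, so do $\sigma(x^{(k)})\to\sigma^{\infty}$ and $x_{a+b}^{(k)}\to x_{a+b}^{\infty}$, whence $g(x^{(k)})\to g^{\infty}:=\sigma^{\infty}-b\,x_{a+b}^{\infty}$ with $g^{\infty}\le g(x)$ by monotonicity. If $x^{(k)}\to(0,\ldots,0)$, then $\sigma^{\infty}=0$ and $x_{a+b}^{\infty}=0$, so $g^{\infty}=0$; but for $x$ in the interior of $A$ we have $g(x)<0$, forcing $g^{\infty}<0$, a contradiction. Hence the orbit of every $x\in\{g<0\}$ stays away from the origin, and since $A\setminus\{g<0\}$ is null this yields the lemma.

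The main obstacle is simply guessing the correct functional and verifying its monotonicity, and the only genuinely delicate point is the case $x_{a+b}<2x_a$, which can occur once $b\ge3$. For $b=2$ one checks that $x_{a+b}\ge 2x_a$ holds throughout $A$, so $g$ is in fact invariant and one even recovers the explicit value $x_{a+2}^{\infty}=x_{a+2}-(x_1+\cdots+x_{a+1})$; it is precisely this possible ``collapse'' of the top coordinate for larger $b$ that makes monotonicity, rather than exact invariance, the right statement to prove.
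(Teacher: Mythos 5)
Your proof is correct, and while it rests on the same two one-step computations as the paper's proof, it organizes them differently enough to be worth comparing. The paper has no Lyapunov functional: it first establishes (Lemma \ref{sum}) that $x^{(k)}\to 0$ implies $\sigma(x)=b\sum_{k\geq 0}x_a^{(k)}$, then telescopes the one-step estimate $x_{a+b}^{(k+1)}=\max\{x_a^{(k)},x_{a+b}^{(k)}-x_a^{(k)}\}\geq x_{a+b}^{(k)}-x_a^{(k)}$ into $x_{a+b}^{(k)}\geq x_{a+b}-\sum_{i=0}^{k-1}x_a^{(i)}$, and finally plays these off against the defining inequality $x_{a+b}\geq \sigma(x)/b$ of $A$ to force $x_{a+b}=\sigma(x)/b$, which is exactly your null hyperplane $\{g=0\}$. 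Your two facts ($\sigma$ drops by exactly $bx_a$; the top coordinate drops by at most $x_a$) are precisely the ingredients of those estimates, but packaging them as unconditional, pointwise monotonicity of $g=\sigma-bx_{a+b}$ buys real simplifications: the argument is self-contained (Lemma \ref{sum}, whose hypothesis already presupposes convergence to the origin, is never needed); Schweiger's forward invariance of $A$ falls out for free from $g(T_{a,b}x)\leq g(x)\leq 0$; and your dichotomy $x_{a+b}\geq 2x_a$ versus $x_{a+b}<2x_a$ is the same one the paper only exploits later, in Section 5 --- its Lemma 5.1 (orbits outside $A$ with $x_{a+b}\geq 2x_a$ stay outside $A$) is exactly your Case 1 computation $g(T_{a,b}x)=g(x)$, which is what forces absorbed orbits to pass through $\Theta$. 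What the paper's bookkeeping yields instead is the quantitative identity $x_{a+b}=\sum_{k\geq 0}x_a^{(k)}$ on the exceptional set, a form it reuses elsewhere (e.g. in Lemma 5.2 and in deriving the explicit limit values); for the present lemma your inequality-only version suffices. Your closing remarks are also accurate: on $A$ with $b=2$ one has $2x_a\leq x_a+x_{a+1}\leq x_{a+2}$, so $g$ is genuinely invariant there, while strict decrease inside $A$ can occur only when $b\geq 3$.
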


\begin{proof} First, remark that $$x_{a+b}^{(k+1)}=\max\{x_a^{(k)},x_{a+b}^{(k)}-x_a^{(k)}\}\geq x_{a+b}^{(k)}-x_a^{(k)}$$ for every $k\geq 0$. This implies that $x_{a+b}^{(k)}\geq x_{a+b}-\sum_{i=0}^{k-1} x_a^{(i)}$. Now suppose that $x^{(k)}\ra 0$ as $k\ra \infty$. From Lemma \ref{sum} we get $\sigma(x)=b\sum_{k\geq 0} x^{(k)}_a$. If $x\in A$, we also have
$x_{a+b}\geq \sigma(x)/b=\sum_{k\geq 0} x^{(k)}_a$. Since $x^{(k)}\ra 0$, we must have $$0 < x_{a+b} - \sum_{i=0}^{k-1} x^{(i)}_a\leq x_{a+b}^{(k)}\to 0.$$ This implies $x_{a+b}= \sigma(x)/b$ which is a null measure condition. \end{proof}

The proof of the next proposition is a straightforward generalization of an argument of  \cite{kraaikamp-meester}.

\begin{prop} \label{origin} Let $a\geq 1$ and $b\geq 2$. Assume that the coordinates of $x\in\Lambda^{a+b}$ are rationally independent and suppose that $x^{(k)}\not\to (0,\ldots,0)$ as $k\to \infty$. Then there exists $k\geq 1$ such that $x^{(k)}\in A$. 
\end{prop}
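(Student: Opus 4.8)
The plan is to show that the hypotheses force the limit point $x^{\infty}=\lim_{k}\tab^k(x)$ to satisfy the defining inequality of $A$ \emph{strictly}, and then to transfer this to a finite iterate using the coordinatewise convergence of the orbit. No induction or delicate combinatorics is needed once the shape of the limit is identified.

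First I would invoke the first lemma of this section (the convergence lemma), whose proof applies precisely to points with rationally independent coordinates. It yields that the orbit converges to a point of the form
$$x^{\infty}=(0,\ldots,0,x_{a+2}^{\infty},\ldots,x_{a+b}^{\infty}),\qquad 0=x_1^{\infty}=\cdots=x_{a+1}^{\infty}\le x_{a+2}^{\infty}\le\cdots\le x_{a+b}^{\infty}.$$
The assumption $x^{(k)}\not\to(0,\ldots,0)$ says exactly that $x^{\infty}\ne 0$, and since every coordinate is bounded above by the last one, this is equivalent to $x_{a+b}^{\infty}>0$.

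The crux is then a counting estimate on the limit. Because $x^{\infty}$ has at most $b-1$ nonzero coordinates, namely $x_{a+2}^{\infty},\ldots,x_{a+b}^{\infty}$, each bounded above by $x_{a+b}^{\infty}$, we obtain
$$\sigma(x^{\infty})=\sum_{i=a+2}^{a+b}x_i^{\infty}\le (b-1)\,x_{a+b}^{\infty}<b\,x_{a+b}^{\infty},$$
where the last inequality is strict precisely because $x_{a+b}^{\infty}>0$ and $b\ge 2$. Thus $x^{\infty}$ lies strictly inside $A$. To finish, write $g(x)=b\,x_{a+b}-\sigma(x)$, so that $A=\{g\ge 0\}$. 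Each coordinate sequence $(x_i^{(k)})_k$ converges to $x_i^{\infty}$, hence $g(x^{(k)})\to g(x^{\infty})=b\,x_{a+b}^{\infty}-\sigma(x^{\infty})\ge x_{a+b}^{\infty}>0$. Therefore $g(x^{(k)})>0$ for all sufficiently large $k$, i.e. $x^{(k)}\in A$, as required.

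The argument is essentially immediate once the form of $x^{\infty}$ is known, so the only genuine input is the convergence lemma, which already carries the weight. The step I would watch most carefully is the \emph{strictness} of $\sigma(x^{\infty})<b\,x_{a+b}^{\infty}$: it rests on having at most $b-1$ surviving coordinates (from $x_1^{\infty}=\cdots=x_{a+1}^{\infty}=0$) together with $x_{a+b}^{\infty}>0$, and it is exactly this strict gap at the limit that allows membership in $A$ to descend to a finite iterate.
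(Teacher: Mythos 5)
Your proposal is correct and follows essentially the same route as the paper: both proofs rest on the convergence lemma identifying the limit as $(0,\ldots,0,x_{a+2}^{\infty},\ldots,x_{a+b}^{\infty})$ with at most $b-1$ nonzero coordinates, and then transfer membership in $A$ to a finite iterate. The only difference is in packaging: you pass to the limit and use the strict gap $\sigma(x^{\infty})\leq (b-1)x_{a+b}^{\infty}<bx_{a+b}^{\infty}$ together with coordinatewise convergence, whereas the paper obtains the inequality $\sigma(x^{(k)})\leq b\,x_{a+b}^{(k)}$ directly at a finite $k$ via a chain of averages starting from the first nonvanishing limit coordinate $x_{a+m}^{\infty}$ --- the same counting, carried out before taking the limit.
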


\begin{proof} Let $x$ be such that $\lim_{k\to\infty} x^{(k)} = (0,\ldots,0,x_{a+2}^{\infty},\ldots,x_{a+b}^{\infty})\neq (0,\ldots,0)$ and let $m=\min \{i : x_{a+i}^{\infty}\neq 0\}$. Obviously $2\leq m\leq b$. Since all the coordinates to the left of $x_{a+m}$ go to zero as $k\to\infty$, there exists $k\geq 1$ such that 
$$\frac{x_1^{(k)}+x_2^{(k)}+\ldots +x_{a+m-1}^{(k)}}{m-1}\leq x_{a+m}^{(k)}.$$
Therefore we may write
$$\frac{x_1^{(k)}+x_2^{(k)}+\ldots +x_{a+m-1}^{(k)} + x_{a+m}^{(k)}}{m} = \frac{x_1^{(k)}+x_2^{(k)}+\ldots +x_{a+m-1}^{(k)}}{m-1}\cdot \frac{m-1}{m} + \frac{x_{a+m}^{(k)}}{m}\leq$$
$$ \leq x_{a+m}^{(k)} \left(\frac{m-1}{m}+\frac{1}{m}\right) = x_{a+m}^{(k)}\leq x_{a+m+1}^{(k)}.$$ 
Finally we get
$$\frac{x_1^{(k)}+x_2^{(k)}+\ldots +x_{a+b}^{(k)}}{b}\leq x_{a+b}^{(k)},$$
which means $x^{(k)}\in A$. \end{proof}

As far as the set $D$ is concerned, we may strengthen the conclusion of Lemma \ref{sigma+}.

\begin{lemma} Let $a\geq 1$ and $b\geq 2$. For almost every $x\in D$ we have $$\lim_{k\to\infty} \tab^k(x) = (0,\ldots,0,x_{a+2}^{\infty},\ldots,x_{a+b}^{\infty}),$$ where $x_{j}^{\infty} >0$, $j=a+2,\ldots,a+b$.
\end{lemma}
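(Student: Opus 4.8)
The plan is to first dispose of the coordinates that are already controlled and then run an induction on $b$ that peels off the largest coordinate. By the first lemma of Section~4, for almost every $x$ the limit has the form $x^{\infty}=(0,\ldots,0,x_{a+2}^{\infty},\ldots,x_{a+b}^{\infty})$, and since $D\subset A$, Lemma~\ref{sigma+} guarantees $x^{\infty}\neq(0,\ldots,0)$, hence $x_{a+b}^{\infty}>0$, for almost every $x\in D$. Because the surviving limits are nondecreasing, it suffices to prove that $x_{a+2}^{\infty}>0$ for almost every $x\in D$, and this is what I would establish by induction on $b$. The base case $b=2$ is immediate: then $D=A$, the only possibly nonzero limit is $x_{a+2}^{\infty}$, and Lemma~\ref{sigma+} already says $x_{a+2}^{\infty}>0$ almost surely.

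For the inductive step ($b\geq 3$) the key observation is that the largest coordinate eventually freezes in last position. Indeed $x_a^{(k)}\to 0$ while $x_{a+b}^{(k)}\to x_{a+b}^{\infty}>0$, so for all large $k$ one has $x_{a+b}^{(k)}>2x_a^{(k)}$, which forces $x_{a+b}^{(k+1)}=x_{a+b}^{(k)}-x_a^{(k)}$ and keeps $x_{a+b}$ the strict maximum. From some time $k_0$ onwards the map $\tab$ therefore subtracts $x_a^{(k)}$ only from $x_{a+1},\ldots,x_{a+b-1}$, the value $x_{a+b}^{(k)}-x_a^{(k)}$ staying in position $a+b$, so the sorted first $a+b-1$ entries are exactly $T_{a,b-1}(x_1^{(k)},\ldots,x_{a+b-1}^{(k)})$; that is, the truncated orbit $p(x^{(k)})=(x_1^{(k)},\ldots,x_{a+b-1}^{(k)})$ is a genuine $T_{a,b-1}$-orbit. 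Since $D$ is forward invariant we have $x_1^{(k)}+\ldots+x_{a+1}^{(k)}\leq x_{a+2}^{(k)}$ for every $k$, which is precisely the membership $p(x^{(k)})\in D_{(a,b-1)}$, and the limit of this truncated orbit is $(0,\ldots,0,x_{a+2}^{\infty},\ldots,x_{a+b-1}^{\infty})$ with the \emph{same} value $x_{a+2}^{\infty}$. Thus the event $\{x_{a+2}^{\infty}=0\}$ for $\tab$ is carried, from time $k_0$ on, into the corresponding event for $T_{a,b-1}$.

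To convert this into a measure statement I would use nonsingularity together with Fubini. Let $N'=\{y\in D_{(a,b-1)}:x_{a+2}^{\infty}(y)=0\}$, where $x_{a+2}^{\infty}(y)$ denotes the $(a+2)$-th coordinate of $\lim_k T_{a,b-1}^k(y)$. This set is $T_{a,b-1}$-invariant, being a tail property of the orbit, and by the inductive hypothesis it is Lebesgue-null in $\Lambda^{a+b-1}$; its cylinder $p^{-1}(N')$ is then null in $\Lambda^{a+b}$. For a point $x\in D$ with $x^{\infty}\neq 0$ and $x_{a+2}^{\infty}=0$ whose splitting occurs from time $k_0$, the previous paragraph gives $p(x^{(k_0)})\in N'$, hence $x\in \tab^{-k_0}(p^{-1}(N'))$. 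Since each branch of $\tab$ is given by a matrix $L_{\pi}\in GL(a+b,\Z)$ with $|\det L_{\pi}|=1$, the map $\tab$ is nonsingular, so $\tab^{-k_0}(p^{-1}(N'))$ is null; taking the countable union over $k_0\geq 0$ and adjoining the null set $\{x\in D:x^{\infty}=0\}$ (null by Lemma~\ref{sigma+}) shows that $\{x\in D:x_{a+2}^{\infty}=0\}$ is null, which closes the induction.

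I expect the main obstacle to be the rigorous justification of the \emph{eventual splitting}: one must argue cleanly that, almost surely, the largest coordinate becomes and remains the strict maximum from some time on, and that the induced action on the first $a+b-1$ coordinates then coincides exactly with $T_{a,b-1}$, reordering included, with a stabilization time $k_0$ that is measurable but not uniform in $x$. The bookkeeping that turns this non-uniform $k_0$ into a genuine null set — through the nonsingularity of $\tab$ and a countable union — is routine once the splitting is in hand, but it is the place where care is needed. The base case and the forward invariance of $D$ are what make the induction close up, the latter being exactly the ingredient that places the truncated orbit back inside $D_{(a,b-1)}$.
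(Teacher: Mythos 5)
Your proof is correct, but it follows a genuinely different route from the paper's. The paper's own proof exploits the defining inequality of $D$ immediately rather than eventually: for $x\in D$ the value $x_{a+2}-x_a$ already dominates the sum $x_1+\cdots+x_{a+1}-x_a$ of the first $a+1$ updated coordinates, so the last $b-1$ coordinates never interleave with the first $a+1$, and from time zero one has $T_{a,b}^k(x)=(T_{a,1}^k(x_1,\ldots,x_{a+1}),\,x_{a+2}-\sum_{i=0}^{k-1}x_a^{(i)},\ldots,x_{a+b}-\sum_{i=0}^{k-1}x_a^{(i)})$, i.e.\ the Brun algorithm on the first block and a pure translation on the second. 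Corollary \ref{brun} (almost every Brun orbit tends to the origin with $\sigma(x_1,\ldots,x_{a+1})=\sum_{k\geq 0}x_a^{(k)}$) then yields the explicit limit $x_j^{\infty}=x_j-(x_1+\cdots+x_{a+1})>0$ for $j=a+2,\ldots,a+b$, almost everywhere on $D$, with no induction and no exceptional-set bookkeeping. Your argument peels off only the largest coordinate, and only after a non-uniform time $k_0$, which is why you need the induction on $b$ and the Fubini/nonsingularity step; these ingredients (eventual domination of the last coordinate, reduction to $T_{a,b-1}$, countable unions of preimages of null sets) are precisely the ones the paper deploys later, in Section 7, to prove Theorem \ref{limit} on all of $\Lambda^{a+b}$, so your proof is sound and in fact bypasses Corollary \ref{brun} entirely, using only the first lemma of Section 4, Lemma \ref{sigma+} and the forward invariance of $D$. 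The trade-off is that you lose the closed-form limit: the explicit formula produced by the paper's computation is exactly what gets recycled in the proof of Theorem \ref{noterg} to exhibit a nonconstant invariant function, so the paper's version of the argument carries additional information that your qualitative one does not.
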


\begin{proof} The property $x_{a+2} \geq  x_1+\ldots+x_{a+1}$  implies that for any $x\in D$ we have
$$T_{a,b}(x)=(T_{a,1}(x_1,\ldots,x_{a+1}),x_{a+2}-x_a,\ldots,x_{a+b}-x_a).$$ Since $T(D)\subset D$, iterating the previous formula gives 
\begin{equation} \label{2} T_{a,b}^k(x)=(T_{a,1}^k(x_1,\ldots,x_{a+1}),x_{a+2}-\sum_{i=0}^{k-1} x_a^{(i)},\ldots,x_{a+b}-\sum_{i=0}^{k-1} x_a^{(i)}),\end{equation}
for any $x\in D$ and $k\geq 1$. Let now $x\in D$ satisfy $T_{a+1}^k(x_1,\ldots,x_{a+1})\to 0$ and $x_{a+2}>x_1+\ldots+x_{a+1}$. Corollary \ref{brun} implies that this is true for almost every $x\in D$ and also that $x_{j}^{\infty}=x_j-\sigma(x_1,\ldots,x_{a+1})>0$, $j=a+2,\ldots,a+b$. \end{proof}


\section{Dynamics outside $A$}

In order to show that $A$ is an absorbing set we have to understand the dynamics of $T_{a,b}$ on its complement. Define
$$ cA= \Lambda^{a+b}\setminus A=\{x\in\Lambda^{a+b} : \sigma(x) > bx_{a+b} \}.$$ 
We begin with the following remark.

\begin{lemma} Let $x\in {cA}$ and suppose that $x_{a+b}\geq 2x_a$. Then $T_{a,b}(x)\in {c A}$.
\end{lemma}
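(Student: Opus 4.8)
The plan is to reduce the claim to a pair of elementary computations: the coordinate sum of the image $\tab(x)$ and its largest coordinate. The defining inequality $\sigma > b\,(\cdot)_{a+b}$ for $cA$ then turns the conclusion into an inequality that is precisely the hypothesis $x \in cA$, so the hypothesis $x_{a+b}\geq 2x_a$ is needed only to pin down which coordinate of the image is largest.

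First I would record that reordering coordinates does not change their sum, so, exactly as in the proof of Lemma~\ref{sum},
$$\sigma(\tab(x)) = \sum_{i=1}^{a} x_i + \sum_{j=a+1}^{a+b}(x_j - x_a) = \sigma(x) - b\,x_a.$$
Next comes the crucial step, identifying the last coordinate $(\tab(x))_{a+b}$, which is the maximum of the entries of $y=(x_1,\ldots,x_a,x_{a+1}-x_a,\ldots,x_{a+b}-x_a)$. These entries split into two monotone blocks: the first $a$ entries $x_1\leq\cdots\leq x_a$, with maximum $x_a$, and the last $b$ entries $x_{a+1}-x_a\leq\cdots\leq x_{a+b}-x_a$, with maximum $x_{a+b}-x_a$. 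The assumption $x_{a+b}\geq 2x_a$ gives $x_{a+b}-x_a\geq x_a$, so the overall maximum is $x_{a+b}-x_a$, i.e. $(\tab(x))_{a+b}=x_{a+b}-x_a$.

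Finally I would substitute these two facts into the membership condition for $cA$. We have $\tab(x)\in cA$ if and only if $\sigma(\tab(x)) > b\,(\tab(x))_{a+b}$, that is,
$$\sigma(x) - b\,x_a > b\,(x_{a+b}-x_a) = b\,x_{a+b} - b\,x_a,$$
which simplifies to $\sigma(x) > b\,x_{a+b}$. This is exactly the condition $x\in cA$, so the conclusion follows.

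The only genuinely load-bearing point is the identification of the largest coordinate of $\tab(x)$, and this is where the hypothesis $x_{a+b}\geq 2x_a$ enters; without it the maximum could be $x_a$ instead, and the argument would not close. Everything else is a single line of arithmetic, so I do not anticipate any real obstacle.
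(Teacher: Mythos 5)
Your proof is correct and follows essentially the same route as the paper's: both identify $(\tab(x))_{a+b}=\max\{x_a,\,x_{a+b}-x_a\}=x_{a+b}-x_a$ using the hypothesis $x_{a+b}\geq 2x_a$, note that $\sigma(\tab(x))=\sigma(x)-bx_a$, and observe that the membership condition for $cA$ then reduces exactly to $\sigma(x)>bx_{a+b}$. No gaps; nothing further to add.
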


\begin{proof} Let $x\in\Lambda^{a+b}$ and write $y=T_{a,b}(x)$. We have $y_1=\min\{x_1,x_{a+1}-x_a\}$ and $y_{a+b}=\max\{x_a,x_{a+b}-x_a\}$. If we suppose that $x\in {c A}$ and $x_{a+b}\geq 2x_a$, then $y_{a+b}=x_{a+b}-x_a$. We get
$$ by_{a+b}=bx_{a+b}-bx_a < \sigma(x)-bx_a=\sigma(y),$$ 
which means $y\in {c A}$. \end{proof}

Having this in mind, we define the following subset of ${c A}$ 
$$ \Theta = \{x\in {c A}: 2x_a\geq x_{a+b} \}.$$ 
The previous lemma implies that any orbit starting in ${c A}$ and eventually attracted by $A$ has to visit $\Theta$. First, we will show that almost every orbit starting in ${cA}$ visits $\Theta$ (which does not mean it is attracted by $A$ though). 

\begin{lemma} For almost every $x\in {cA}$ there exists $k\geq 0$ such that $T^k_{a,b}(x)\in\Theta$. 
\end{lemma}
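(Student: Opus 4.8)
The plan is to argue by contradiction on a positive measure subset, using the three ingredients already at hand: the forward invariance provided by the previous lemma, the dichotomy of Proposition~\ref{origin}, and the summation identity of Lemma~\ref{sum}. Since almost every $x\in cA$ has rationally independent coordinates, it suffices to show that no such point can generate an orbit that avoids $\Theta$ for all time. So I would fix $x\in cA$ with rationally independent coordinates and assume, for contradiction, that $x^{(k)}\notin\Theta$ for every $k\geq 0$.

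The first step is to check that such an orbit stays in $cA$ forever. If $x^{(k)}\in cA$ and $x^{(k)}\notin\Theta$, then by the definition of $\Theta$ we have $x_{a+b}^{(k)}>2x_a^{(k)}$, in particular $x_{a+b}^{(k)}\geq 2x_a^{(k)}$, so the previous lemma yields $x^{(k+1)}\in cA$. By induction the whole orbit lies in $cA$, hence never meets $A$. Proposition~\ref{origin} then forces $x^{(k)}\to(0,\ldots,0)$, since otherwise the orbit would be obliged to enter $A$ at some finite time.

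The decisive step is to track how $x_{a+b}$ evolves along this orbit. Recall that $x_{a+b}^{(k+1)}=\max\{x_a^{(k)},x_{a+b}^{(k)}-x_a^{(k)}\}$; the strict inequality $x_{a+b}^{(k)}>2x_a^{(k)}$, which holds at every step precisely because the orbit never enters $\Theta$, means the maximum is attained by the second term, so $x_{a+b}^{(k+1)}=x_{a+b}^{(k)}-x_a^{(k)}$. Telescoping gives $x_{a+b}^{(K)}=x_{a+b}-\sum_{i=0}^{K-1}x_a^{(i)}$ for all $K$, and letting $K\to\infty$ together with $x_{a+b}^{(K)}\to 0$ produces $\sum_{i\geq 0}x_a^{(i)}=x_{a+b}$. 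On the other hand, since $x^{(k)}\to 0$, Lemma~\ref{sum} applies and gives $\sigma(x)=b\sum_{i\geq 0}x_a^{(i)}=bx_{a+b}$. This contradicts $x\in cA$, where $\sigma(x)>bx_{a+b}$ holds strictly. Consequently no rationally independent point of $cA$ can avoid $\Theta$ forever, which settles the claim for almost every $x\in cA$.

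I expect the main obstacle to be the bookkeeping in the telescoping step: one has to be certain that the strict inequality $x_{a+b}^{(k)}>2x_a^{(k)}$ persists at every single iteration, which is exactly the standing hypothesis that the orbit avoids $\Theta$. This is what guarantees that $x_{a+b}$ decreases by precisely $x_a$ at each step, upgrading the merely forward-invariant picture into the sharp equality $\sum_{i\geq 0}x_a^{(i)}=x_{a+b}$ that collides with the strict defining inequality of $cA$. The remaining care is purely measure-theoretic, namely that rational dependence of the coordinates is confined to a countable union of hyperplanes and hence to a null set, so that the contradiction obtained for rationally independent points indeed covers almost every $x\in cA$.
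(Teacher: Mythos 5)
Your proof is correct and follows essentially the same route as the paper: assume the orbit avoids $\Theta$, use the previous lemma and Proposition~\ref{origin} to force convergence to the origin, then combine Lemma~\ref{sum} with the telescoping identity $x_{a+b}^{(k+1)}=x_{a+b}^{(k)}-x_a^{(k)}$ to derive $\sigma(x)=bx_{a+b}$, contradicting the definition of $cA$. The only (welcome) difference is that you make explicit the induction showing the orbit stays in $cA$, which the paper leaves implicit.
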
 

\begin{proof} Let $x\in {cA}$ be a point whose orbit never leaves ${cA}$. We know (Proposition \ref{origin}) that in this case $x^{(k)}$ converges to the origin almost surely, which also means (Lemma \ref{sum}) that $\sigma(x)=b\sum_{k\geq 0} x_a^{(k)}$. If the orbit of $x$ never visits $\Theta$, for every $k\geq 0$ we have $x_{a+b}^{(k+1)}=x_{a+b}^{(k)}-x_a^{(k)}$, which implies $x_{a+b}= \sum_{k\geq 0} x_a^{(k)}=\sigma(x)/b$. This contradicts the definition of ${cA}$. \end{proof}

For convenience we set
$$\Gamma=cA\cap T^{-1}(A) \subset \Theta.$$
To show that $A$ is absorbing for $\tab$, it is enough to show that the set $\Gamma$ absorbs almost every orbit starting in ${cA}$. To this end, we consider a kind of a first return map $P:\Theta\to\Theta$, conditioned on the set $\Gamma$:
$$P(x) = \left\{ \begin{array}{ll}  x & \text{if} \ x\in \Gamma \\ T_{a,b}(x) & \text{if} \ x\not\in \Gamma \ \text{and} \ T_{a,b}(x)\in\Theta \\ T_{a,b}^k(x) & \text{if} \ x \not\in \Gamma, \  T_{a,b}^k(x)\in\Theta \ \text{and} \ T_{a,b}^{i}(x)\not\in\Theta, i=1,\ldots,k-1. \end{array}\right. $$ 
This map is well defined for almost every point in $\Theta$. 

The map $P$ is piecewise linear just as $T_{a,b}$ is, but now the number of cylinders is infinite, since the first return time to $\Theta$ is not bounded. Let us define them explicitly. For every $k\geq 1$ and every set of permutations $\pi_1,\ldots,\pi_k\in\Pi_{a,b}$, we put
$$ \Theta_{\pi_1,\ldots,\pi_k} = \{x\in \Theta\setminus \Gamma : P(x)=\tab^k(x) \ \text{and} \ \tab^i(x)= L_{\pi_i}\cdots L_{\pi_1}x^T \ \text{for} \ i=1,\ldots,k \}. $$
Some of these cylinders are empty. If a given cylinder is nonempty, then it corresponds to the set of points of $\Theta$, whose orbits under $\tab$ visit the sequence of cylinders $\Lambda_{\pi_i}$, $i=1,\ldots,k$, before returning to the set $\Theta$.  On such a  cylinder, $P$ is given by the matrix $L_{\pi_1,\ldots,\pi_k} = L_{\pi_k}\cdots L_{\pi_1}$ which is a product of a finite number of matrices $L_{\pi_i}$, corresponding to the cylinders of $T_{a,b}$ visited by the orbit. We may thus decompose $\Theta$ as follows
\begin{equation} \label{partition} \Theta=\Gamma \cup \bigcup_{k=1}^{\infty} \bigcup_{(\pi_1,\ldots,\pi_k)\in\Pi_{a,b}^k} \Theta_{\pi_1,\ldots,\pi_k}.\end{equation}

The cylinders have the following property. 

\begin{prop} \label{part-full} For every choice $\pi_1,\ldots,\pi_k\in\Pi_{a,b}$ the corresponding cylinder $\Theta_{\pi_1,\ldots\pi_k}$ is either empty or full for the map $P$, i.e. $P(\Theta_{\pi_1,\ldots,\pi_k})=\Theta$ whenever $\Theta_{\pi_1,\ldots,\pi_k}$ is nonempty. \end{prop}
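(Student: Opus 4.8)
The plan is to invert $P$ on the cylinder using the inverse branches of $T_{a,b}$ and to show that this inverse branch carries all of $\Theta$ back into $\Theta_{\pi_1,\ldots,\pi_k}$. By Lemma~3.3 each $\Lambda_\pi$ is full, so by composition $\tab^k$ restricts to a linear bijection of the itinerary cylinder $\Lambda_{\pi_1}\cap \tab^{-1}\Lambda_{\pi_2}\cap\cdots\cap \tab^{-(k-1)}\Lambda_{\pi_k}$ onto $\Lambda^{a+b}$, with inverse branch $M=M_{\pi_1}\cdots M_{\pi_k}$. Since $\Theta_{\pi_1,\ldots,\pi_k}$ is contained in this itinerary cylinder and $P=\tab^k$ there, it suffices to prove $M(\Theta)=\Theta_{\pi_1,\ldots,\pi_k}$. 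The inclusion $\Theta_{\pi_1,\ldots,\pi_k}\subseteq M(\Theta)$ is immediate, since any $x$ in the cylinder satisfies $\tab^k(x)\in\Theta$ and $x=M(\tab^k(x))$; the content of the proposition is the reverse inclusion, and this is where nonemptiness will enter.

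First I would record the conditions cutting out $\Theta_{\pi_1,\ldots,\pi_k}$ and push them through $M$. Writing $z=\tab^k(x)$ and $z^{(i)}=\tab^i(x)=M_{\pi_{i+1}}\cdots M_{\pi_k}z$ for $0\le i\le k$, the membership $x\in\Theta_{\pi_1,\ldots,\pi_k}$ amounts to $z^{(0)}\in\Theta$, $z^{(1)}\in cA$ (this is $x\notin\Gamma$), $z^{(i)}\notin\Theta$ for $1\le i\le k-1$, and $z^{(k)}=z\in\Theta$. I would dispose of all the $cA$-parts at once using that $A$ is forward invariant (Lemma~3.4): if a preimage $z^{(i)}$ lay in $A$, its forward image $z=\tab^{k-i}(z^{(i)})$ would lie in $A$ too, contradicting $z\in\Theta\subseteq cA$. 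Hence for every $z\in\Theta$ all the iterates $z^{(0)},\ldots,z^{(k)}$ automatically lie in $cA$, the condition $x\notin\Gamma$ is free, and each remaining requirement reduces to deciding whether $2(z^{(i)})_a\ge (z^{(i)})_{a+b}$ or not, i.e. membership of $z^{(i)}$ in $\Theta$ versus $cA\setminus\Theta$.

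The crucial observation is that this last inequality is constant on each cylinder $\Lambda_\pi$. Indeed, for $w\in\Lambda_\pi$ the reordered vector $y=(w_1,\ldots,w_a,w_{a+1}-w_a,\ldots,w_{a+b}-w_a)$ has $y_a=w_a$ and $y_{a+b}=w_{a+b}-w_a$ sitting in positions $\pi(a)$ and $\pi(a+b)$ of its ascending rearrangement, so $2w_a\ge w_{a+b}\iff y_a\ge y_{a+b}\iff \pi(a)\ge\pi(a+b)$. Thus, once $cA$-membership is known, belonging to $\Theta$ is governed purely by the combinatorial condition $\pi(a)\ge\pi(a+b)$ on the cylinder. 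Applying this along the itinerary, where $z^{(i)}\in\Lambda_{\pi_{i+1}}$ for $0\le i\le k-1$, the requirement $z^{(0)}\in\Theta$ becomes $\pi_1(a)\ge\pi_1(a+b)$, and the requirements $z^{(i)}\notin\Theta$ for $1\le i\le k-1$ become $\pi_{i+1}(a)<\pi_{i+1}(a+b)$. All of these are conditions on the itinerary $(\pi_1,\ldots,\pi_k)$ alone, with no dependence on $z$.

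The dichotomy then follows. If the itinerary satisfies these combinatorial conditions, they hold for every $z\in\Theta$, so $M(\Theta)\subseteq\Theta_{\pi_1,\ldots,\pi_k}$ and therefore $P(\Theta_{\pi_1,\ldots,\pi_k})=\Theta$; if any of them fails, then no point of the itinerary cylinder can meet the corresponding $\Theta$ or $cA\setminus\Theta$ demand, and $\Theta_{\pi_1,\ldots,\pi_k}=\emptyset$. I expect the main obstacle to be precisely the structural identity of the previous paragraph: recognizing that $\Theta$-membership is determined cylinder-wise by $\pi(a)\ge\pi(a+b)$, and that $cA$-membership propagates backward along every branch by forward invariance of $A$. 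Together these two facts decouple the defining conditions from the target point $z$, which is exactly what converts ``nonempty'' into ``full''. A minor point to address is the usual null set of ties where $\pi_x$ is not uniquely determined, which affects only a Lebesgue-null set and is harmless for the measure-theoretic statement.
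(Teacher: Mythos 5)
Your proof is correct and is essentially the paper's own argument: both rest on the fullness of the cylinders $\Lambda_\pi$ (Lemma 3.3), on the forward invariance of $A$ (and the fact that $\tab(\Gamma)\subset A$), which makes all the $cA$- and non-$\Gamma$-requirements automatic, and on the observation that membership in $\Theta$, within $cA$, is decided cylinder-by-cylinder on each $\Lambda_\pi$. The only differences are presentational: you run the argument backwards through the inverse branch $M=M_{\pi_1}\cdots M_{\pi_k}$ and make the combinatorial criterion $\pi(a)\geq\pi(a+b)$ explicit, whereas the paper argues with forward images via the identity $\tab(\Lambda_\pi\cap(cA\setminus\Gamma))=cA$; and you explicitly flag the null set of ties, which the paper silently ignores.
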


\begin{proof} The set $\Theta$ is defined by the inequality $x_a\geq x_{a+b}-x_a$, where both terms are the coordinates of $\tab(x)$. Since every cylinder $\Lambda_{\pi}$ corresponds to some order of coordinates of $\tab(x)$, for every $\pi\in\Pi_{a,b}$ we have either $\Lambda_{\pi}\cap {cA}\subset \Theta$ or $\Lambda_{\pi}\cap {cA}\subset  {cA}\setminus \Theta$. We have the following:
\begin{equation} \label{image}  P(\Theta_{\pi_1,\ldots,\pi_k})=\tab^k\{x\in\Delta_{\pi_1}\setminus \Gamma : x^{(1)}\in\Delta_{\pi_2}, \ldots,x^{(k-1)}\in\Delta_{\pi_k},x^{(k)}\in\Theta\},\end{equation}  where all the cylinders involved are restricted to ${cA}$. It is obvious that our cylinder is empty, unless $\Lambda_{\pi_1}\cap {cA}\subset \Theta$ and $\Lambda_{\pi_i}\cap {cA}\subset {c\Theta}$ for $i=2,\ldots,k$.

Suppose that our cylinder is nonempty. Recall that the cylinders $\Lambda_{\pi}$ corresponding to the map $\tab$ are full. Since $\tab(A)\subset A$ and $\tab(\Gamma)\subset A$, we must have $\tab(\Lambda_{\pi}\cap({cA}\setminus \Gamma))={cA}$ for every $\pi\in\Pi_{a,b}$. This, together with (\ref{image}), clearly implies that $P(\Theta_{\pi_1,\ldots,\pi_k})=\Theta$. \end{proof}

Let us slightly alter the notations in order to introduce the sequence of refinements of the partition (\ref{partition}). Since this partition corresponds to the first iterate of the map $P$ (or the first return time of $\tab$ to $\Theta$), we will call its cylinders $\Theta^{(1)}_{\nu}$, where $\nu$ is some finite sequence of permutations from the set $\Pi_{a,b}$.  We may rewrite (\ref{partition}) as 
$$\Theta=\Gamma\cup\bigcup_{k=1}^{\infty} \bigcup_{\nu\in\Pi_{a,b}^k} \Theta^{(1)}_{\nu}.$$

Every cylinder $\Theta^{(1)}_{\sigma}$ may be partitioned further, considering the second iteration of $P$. For every $k\geq 1$ and $\nu' \in\Pi_{a,b}^k$ we define
$$\Theta^{(2)}_{\nu\star\nu'} = \{x\in\Theta^{(1)}_{\nu} \ : \ P(x)\in \Theta^{(1)}_{\nu'} \},$$  which generates the following refinement of the partition (\ref{partition}):
$$\Theta=P^{-1}(\Gamma)\cup \bigcup_{k,m=1}^{\infty} \bigcup_{\nu\in\Pi_{a,b}^k}  \bigcup_{\nu'\in\Pi_{a,b}^m} \Theta^{(2)}_{\nu\star \nu'}.$$
It is not hard to show that the cylinders of this new, finer partition are either empty or full for the map $P^2$. 

Following the same pattern, for every $n\geq 1$ we may consider the $n$th iteration of the map $P$ and define the corresponding $n$th partition of $\Theta$:
\begin{equation} \label{partition-n} \Theta = P^{-(n-1)}(\Gamma)\cup \bigcup_{k_1,k_2,\ldots,k_n=1}^{\infty} \bigcup_{\nu_1\in\Pi_{a,b}^{k_1}} \ldots  \bigcup_{\nu_n\in\Pi_{a,b}^{k_n}} \Theta^{(n)}_{\nu_1\star\ldots\star\nu_n}, \end{equation}  
where 
$$ \Theta^{(n)}_{\nu_1\star\ldots\star\nu_n} = \{x\in \Theta^{(1)}_{\nu_1} \ : \ P(x)\in \Theta^{(1)}_{\nu_2},\ldots,P^{n-1}(x)\in\Theta^{(1)}_{\nu_n}\}.$$
The following result may be deduced by induction from Proposition \ref{part-full} and the definition of the $n$th partition.

\begin{prop} For every $k_1,\ldots,k_n\geq 1$ and every $\nu_i\in\Pi_{a,b}^{k_i}$, $i=1,\ldots,n$, the corresponding cylinder $\Theta^{(n)}_{\nu_1\star\ldots\star\nu_n}$ is either empty or full for the map $P^n$, i.e. $P^n(\Theta^{(n)}_{\nu_1\star\ldots\star\nu_n})=\Theta$ whenever $\Theta^{(n)}_{\nu_1\star\ldots\star\nu_n}$ is nonempty.  \end{prop}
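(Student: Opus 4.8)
The plan is to argue by induction on $n$, with the base case $n=1$ being exactly Proposition \ref{part-full}. The first thing I would record is that the cylinders nest under $P$ in the natural way. Unwinding the definition, for $x\in\Theta^{(1)}_{\nu_1}$ and $y=P(x)$ the conditions $P^{j}(x)\in\Theta^{(1)}_{\nu_{j+1}}$ $(j=1,\dots,n-1)$ read as $P^{j-1}(y)\in\Theta^{(1)}_{\nu_{j+1}}$, which are precisely the defining conditions for $y\in\Theta^{(n-1)}_{\nu_2\star\cdots\star\nu_n}$. Hence
$$\Theta^{(n)}_{\nu_1\star\cdots\star\nu_n}=\Theta^{(1)}_{\nu_1}\cap P^{-1}\bigl(\Theta^{(n-1)}_{\nu_2\star\cdots\star\nu_n}\bigr).$$

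Next I would isolate the only input beyond Proposition \ref{part-full}, namely that the restriction of $P$ to a single first-level cylinder $\Theta^{(1)}_{\nu_1}$ is a bijection onto $\Theta$ (modulo null sets). Surjectivity is the fullness asserted in Proposition \ref{part-full}. Injectivity holds because on $\Theta^{(1)}_{\nu_1}$ the map $P$ acts as the single linear map $x\mapsto L_{\nu_1}x^T$, where $L_{\nu_1}$ is a finite product of the matrices $L_{\pi_i}\in GL(a+b,\Z)$ from Section 3 and is therefore invertible. As a consequence, for every subset $S\subset\Theta$ one has the pushforward identity
$$P\bigl(\Theta^{(1)}_{\nu_1}\cap P^{-1}(S)\bigr)=S\cap P\bigl(\Theta^{(1)}_{\nu_1}\bigr)=S,$$
the last equality using $P(\Theta^{(1)}_{\nu_1})=\Theta\supset S$; moreover this image is nonempty as soon as $\Theta^{(1)}_{\nu_1}\cap P^{-1}(S)$ is.

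With these two facts the inductive step is formal. Assume the statement for $n-1$ and let $\Theta^{(n)}_{\nu_1\star\cdots\star\nu_n}$ be nonempty. Applying the pushforward identity with $S=\Theta^{(n-1)}_{\nu_2\star\cdots\star\nu_n}$ gives $P(\Theta^{(n)}_{\nu_1\star\cdots\star\nu_n})=\Theta^{(n-1)}_{\nu_2\star\cdots\star\nu_n}$, and this cylinder is nonempty by the remark above. The induction hypothesis then yields $P^{n-1}(\Theta^{(n-1)}_{\nu_2\star\cdots\star\nu_n})=\Theta$, so that
$$P^n\bigl(\Theta^{(n)}_{\nu_1\star\cdots\star\nu_n}\bigr)=P^{n-1}\Bigl(P\bigl(\Theta^{(n)}_{\nu_1\star\cdots\star\nu_n}\bigr)\Bigr)=P^{n-1}\bigl(\Theta^{(n-1)}_{\nu_2\star\cdots\star\nu_n}\bigr)=\Theta,$$
which closes the induction.

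I do not expect a serious obstacle here: the substantive work was already carried out in Proposition \ref{part-full}, and what remains is bookkeeping. The only point demanding care is that $P$ is defined merely almost everywhere and that fullness holds modulo Lebesgue-null sets, so both the nesting identity and the pushforward identity must be read almost everywhere. This is harmless because each branch $P|_{\Theta^{(1)}_{\nu_1}}$ is a linear map given by an invertible integer matrix, hence sends null sets to null sets and pulls them back to null sets; the almost-everywhere statements therefore compose cleanly across the $n$ iterations.
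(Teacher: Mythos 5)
Your proof is correct and is essentially the argument the paper intends: the paper gives no written proof, stating only that the result ``may be deduced by induction from Proposition \ref{part-full} and the definition of the $n$th partition,'' and your induction—via the nesting identity $\Theta^{(n)}_{\nu_1\star\cdots\star\nu_n}=\Theta^{(1)}_{\nu_1}\cap P^{-1}(\Theta^{(n-1)}_{\nu_2\star\cdots\star\nu_n})$ and the fullness of first-level cylinders—is exactly that deduction, spelled out. (A minor remark: injectivity of $P$ on $\Theta^{(1)}_{\nu_1}$ is not actually needed, since $f(A\cap f^{-1}(S))=f(A)\cap S$ holds for any map; but invoking it does no harm.)
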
 


\section{Projected dynamics} 

In this section, we will show that some constant proportion of points from every cylinder $\Theta^{(n)}_{\nu_1\star\ldots\star\nu_n}$ falls into $\Gamma$ under the action of $P^n$. To make this statement meaningful, we project our dynamics onto the simplex $\Delta=\{x\in\Lambda^{a+b} \ : \ \sigma(x)=1\}$, which is of  finite $(a+b-1)$-dimensional Lebesgue measure. Let $p:\Lambda^{a+b}\to \Delta$ be the projection given by $x\mapsto x/\sigma(x)$ and consider the new transformation $\tilde{T}_{a,b} :\Delta \to \Delta$ which makes the following diagram commute:

$$\begin{CD}
\mbox{$\Lambda^{a+b}$} @>\mbox{$\tab$}>> \mbox{$\Lambda^{a+b}$}\\
@VpVV @VVpV\\
\mbox{$\Delta$} @>>\mbox{$\tilde{T}_{a,b}$}> \mbox{$\Delta$}
\end{CD}$$

We may define the projected counterparts of the sets $A$ and $D$ which are given by 
$$\tilde{A}=\{ x\in \Delta  :  1>bx_{a+b} \} \ \ \text{and} \ \  \tilde{D}=\{x\in\Delta  : x_1+x_2+\ldots+x_{a+1}\leq x_{a+2}\}.$$ 
It is easy to see that both sets are $\tilde{T}_{a,b}$-forward invariant. We will also consider the projected set 
\begin{equation} \label{theta} \tilde{\Theta}=\{ x\in\Delta  : 1 > bx_{a+b}, \ 2x_a\geq x_{a+b}\}   \end{equation} 
with its subset $\tilde{\Gamma}$, the corresponding first return map $\tilde{P} : \tilde{\Theta}\to\tilde{\Theta}$ and the underlying partitions into cylinders $\tilde{\Theta}^{(n)}_{\nu_1\star\ldots\star\nu_n}$. 

\begin{prop} \label{ratio} 
There exists a constant $\alpha\in(0,1)$ such that for every $n\geq 1$ and every projected cylinder $\tilde{\Theta}_{\nu_1\star\ldots\star\nu_n}^{(n)}$ of the $n$th partition of $\tilde{\Theta}$,  we have
\begin{equation} \label{rapport} \frac{Leb(x\in\tilde{\Theta}_{\nu_1\star\ldots\star\nu_n}^{(n)} : \tilde{P}^n(x)\in \tilde{\Gamma})}{Leb (\tilde{\Theta}_{\nu_1\star\ldots\star\nu_n}^{(n)})}\geq \alpha,\end{equation} 
where $Leb$ stands for $(a+b-1)$-dimensional Lebesgue measure in $\Delta$. 
\end{prop}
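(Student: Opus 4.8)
The plan is to use the fullness of the cylinders to realize each $\tilde{\Theta}^{(n)}_{\nu_1\star\ldots\star\nu_n}$ as an inverse-branch image of $\tilde{\Theta}$, and then to reduce (\ref{rapport}) to a ratio of two integrals that differ only through the Jacobian of a single projective map. First I would record that, by the preceding proposition, $\tilde{P}^n$ restricts to a bijection (modulo null sets) from $\tilde{\Theta}^{(n)}_{\nu_1\star\ldots\star\nu_n}$ onto $\tilde{\Theta}$. On the cylinder this map is the projectivization of a matrix $N$ which is a product of matrices $L_{\pi}$, hence an element of $GL(a+b,\Z)$; writing $M=N^{-1}$ for the corresponding non-negative inverse branch, we have $|\det M|=1$. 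Denoting by $\tilde{M}:\Delta\to\Delta$, $\tilde{M}(y)=My^T/\sigma(My^T)$, the induced projective map (and abbreviating $My$ for $My^T$), fullness gives $\tilde{\Theta}^{(n)}_{\nu_1\star\ldots\star\nu_n}=\tilde{M}(\tilde{\Theta})$ and, since $\tilde{\Gamma}\subseteq\tilde{\Theta}$, also $\{x\in\tilde{\Theta}^{(n)}_{\nu_1\star\ldots\star\nu_n}:\tilde{P}^n(x)\in\tilde{\Gamma}\}=\tilde{M}(\tilde{\Gamma})$. Thus the left-hand side of (\ref{rapport}) equals $Leb(\tilde{M}(\tilde{\Gamma}))/Leb(\tilde{M}(\tilde{\Theta}))$.

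Next I would change variables. The Jacobian of a projective map on $\Delta$, the higher-dimensional analogue of the area formula used in Section 2, is $|J_{\tilde{M}}(y)|=|\det M|\,\sigma(My)^{-(a+b)}=\sigma(My)^{-(a+b)}$, so that (the intrinsic normalization constant of $Leb$ on $\Delta$ cancelling between numerator and denominator)
\[
\frac{Leb(\tilde{M}(\tilde{\Gamma}))}{Leb(\tilde{M}(\tilde{\Theta}))}=\frac{\int_{\tilde{\Gamma}}\sigma(My)^{-(a+b)}\,dLeb(y)}{\int_{\tilde{\Theta}}\sigma(My)^{-(a+b)}\,dLeb(y)}.
\]
Writing $v_j=\sigma(Me_j)$ for the column sums of $M$, one has $\sigma(My)=\sum_j v_j y_j=v\cdot y$. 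The whole estimate therefore rests on a bounded distortion statement: I must produce a universal constant $C$, depending only on $a$ and $b$, such that $\sigma(My)$ oscillates by at most a factor $C$ as $y$ ranges over $\tilde{\Theta}$, uniformly over all admissible inverse branches $M$. Granting this, the integrand $\sigma(My)^{-(a+b)}$ varies by at most $C^{a+b}$, and hence the ratio is at least $C^{-(a+b)}\,Leb(\tilde{\Gamma})/Leb(\tilde{\Theta})=:\alpha$, a quantity positive and independent of $n$ and of the cylinder, since $\tilde{\Gamma}$ and $\tilde{\Theta}$ are fixed sets with $Leb(\tilde{\Gamma})>0$.

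This bounded distortion is the main obstacle, and the idea is to exploit the defining inequality $2y_a\geq y_{a+b}$ of $\tilde{\Theta}$. Together with $y_{a+b}\geq 1/(a+b)$ it forces $y_a,\ldots,y_{a+b}\geq 1/(2(a+b))$, so the large coordinates are bounded below by a universal constant, while $y_j\leq y_{a+b}<1/b$ for $j<a$. Splitting $v\cdot y$ into the sums over $j<a$ and $j\geq a$, one gets the lower bound $\min_{\tilde{\Theta}} v\cdot y\geq \frac{1}{2(a+b)}\sum_{j\geq a}v_j$ and the upper bound $\max_{\tilde{\Theta}} v\cdot y\leq \max_{j<a}v_j+\frac{1}{b}\sum_{j\geq a}v_j$, so that the oscillation is controlled by $\max_{j<a}v_j/\sum_{j\geq a}v_j$, and it suffices to bound this quantity uniformly.

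It remains to bound $\max_{j<a}v_j/\sum_{j\geq a}v_j$ uniformly over the inverse branches of $\tilde{P}^n$. For a single inverse branch $M_\pi$ this is immediate: by (\ref{columns}) the unique heavy column, of sum $b+1$, sits at index $\pi(a)\geq a$ (since $\pi(1)<\ldots<\pi(a)$ forces $\pi(a)\geq a$), while every column of index $<a$ has sum $1$. The work, which I expect to be the hard part, is to propagate this across a composition $M=M_{\pi_1}\cdots M_{\pi_k}$ arising from a single excursion of $T_{a,b}$ leaving and returning to $\Theta$, and then across the $n$-fold composition. I would argue by induction on the length of the excursion, tracking the $\ell^1$-norms of the columns and showing that the return condition $2x_a\geq x_{a+b}$ to $\Theta$ prevents the heavy mass from accumulating in the first $a-1$ columns; this is precisely the role played by $\Theta$, which resets the geometry at each return and thereby replaces the degrading bound (of order $k+1$) encountered for the full inverse images in Section 2 by a uniform one.
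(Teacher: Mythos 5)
Your reduction is exactly the one in the paper: fullness of the cylinder realizes it as the image of $\tilde{\Theta}$ under a single inverse branch $\tilde{M}$, the change of variables produces the Jacobian $\sigma(My)^{-(a+b)}$ (up to a dimensional constant that cancels in the ratio), and the geometry of $\tilde{\Theta}$ --- namely $x_{a+b}\geq 1/(a+b)$ together with $2x_a\geq x_{a+b}$ and $x_{a+b}<1/b$ --- correctly reduces (\ref{rapport}) to a uniform bound on $\max_{j<a}v_j/\sum_{j\geq a}v_j$ over all inverse branches. Up to that point your argument is sound and coincides with the paper's proof, including the observation that for a single elementary matrix $M_\pi$ the heavy column sits at index $\pi(a)\geq a$.

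The gap is that you never prove this uniform bound: you explicitly defer it as ``the hard part,'' and the route you sketch for it is misdirected. You expect a naive induction over products to degrade like $k+1$, as in Section 2, and to be rescued by the return condition $2x_a\geq x_{a+b}$, which you propose to propagate through an induction on excursions. No such rescue is needed, and it would be painful to attempt: the return condition constrains orbits, not matrices, and converting it into a constraint on which products $M_{\pi_1}\cdots M_{\pi_k}$ can occur is precisely the bookkeeping the paper avoids. The paper instead proves that for an \emph{arbitrary} product $M=M_{\pi_1}\cdots M_{\pi_k}$ of the elementary inverse matrices one has $\max_{1\leq j\leq a+b}c_j=\max_{a\leq j\leq a+b}c_j$, hence $\max_{j<a}c_j\leq\sum_{j\geq a}c_j$; the induction is one line: by (\ref{columns}), right-multiplying by $M_\pi$ creates a matrix whose column at index $\pi(a)\geq a$ equals the sum of the old columns $a,\ldots,a+b$ --- so its sum dominates the old maximum by the inductive hypothesis --- while every other new column is just one of the old columns relocated. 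Your worry about degradation conflates the statement of Section 2 (mutual ratios of \emph{all} column norms, which do degrade with $k$) with the much weaker statement needed here (the location of the maximal column sum), which is stable under multiplication. So the skeleton of your proof is right, but the decisive combinatorial lemma is missing, and the plan you offer for it chases a mechanism (the resetting role of $\Theta$) that plays no part in its proof; the role of $\Theta$ is only to pin down the coordinates $x_a,\ldots,x_{a+b}$ in the integrand, which you had already used.
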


\begin{proof} 
Let $\tilde{\Theta}_{\nu_1\star\ldots\star\nu_n}^{(n)}$ be a cylinder. As the initial cylinder ${\Theta}_{\nu_1\star\ldots\star\nu_n}^{(n)}$ is full for the map $P^n$, we easily get that $\tilde{P}^n(\tilde{\Theta}_{\nu_1\star\ldots\star\nu_n}^{(n)})=\tilde{\Theta}$. Moreover, on $\tilde{\Theta}_{\nu_1\star\ldots\star\nu_n}^{(n)}$ the map $\tilde{P}^n$ is defined by 
\begin{equation} \label{P} \tilde{P}^n(x)=\frac{Lx^T}{\sigma(Lx^T)}, \end{equation}
where $L$ is the finite product of matrices $L_{\pi_i}$ corresponding to the cylinders of the original transformation $\tab$ (the projection map $p:\Lambda^{a+b}\to\Delta$ preserves the order of coordinates at any point of $\Lambda^{a+b}$). Let $M=L^{-1}$ and denote by $\tilde{P}_M^{-n}:\tilde{\Theta}\to\tilde{\Theta}_{\nu_1\star\ldots\star\nu_n}^{(n)}$ the corresponding inverse branch of $\tilde{P}^n$. Once again we have 
\begin{equation} \label{PB}  \tilde{P}_M^{-n}(x)=\frac{Mx^T}{\sigma(Mx^T)}.\end{equation}
We get 
$$Leb (\tilde{\Theta}_{\nu_1\star\ldots\star\nu_n}^{(n)})=\int_{\tilde{\Theta}_{\nu_1\star\ldots\star\nu_n}^{(n)}} dx=\int_{\tilde{P}_M^{-n}(\tilde{\Theta})}dx=\int_{\tilde{\Theta}} J(\tilde{P}_M^{-n}) dx$$
and 
$$Leb(x\in\tilde{\Theta}_{\nu_1\star\ldots\star\nu_n}^{(n)} : \tilde{P}^n(x)\in \tilde{\Gamma})=\int_{\tilde{P}_M^{-n}(\tilde{G})}dx=\int_{\tilde{\Gamma}} J(\tilde{P}_M^{-n}) dx,$$
where $J(\tilde{P}_M^{-n})$ stands for the Jacobian of the transformation defined by (\ref{PB}) and $dx$ is $(a+b-1)$-dimensional Lebesgue volume element on $\Delta$. The matrix $M$ is non-negative and $\vert \det(M)\vert =1$. One may check (cf. \cite{arnaldo5}), that in this case the corresponding Jacobian is given by

$$J(\tilde{P}_M^{-n})(x)=\frac{1}{K(c_1x_1+c_2x_2+\ldots+c_{a+b}x_{a+b})^{a+b}},$$
where $c_i$ is the sum of terms over the $i$th column of $M$ and $K>0$ is a constant depending on the dimension $a+b$ but independent of the matrix $M$. 

In order to estimate the integrals above, we have to estimate the Jacobian on the set $\tilde{\Theta}$. Let $x\in \tilde{\Theta}$. Since its coordinates are ordered in the nondecreasing order and their sum is equal to one, we must have $x_{a+b}\geq 1/(a+b)$. Together with (\ref{theta}), we get 
\begin{equation} \frac{1}{2(a+b)} \leq x_a\leq x_{a+1}\leq \ldots\leq x_{a+b}\leq \frac{1}{b}\leq 1.\end{equation} 
This implies that on the set $\tilde{\Theta}$ we have
\begin{equation} \label{jac} \frac{1}{K(c_1+c_2+\ldots+c_{a+b})^{a+b}}\leq J(\tilde{P}^{-n}_M)\leq \frac{2^{a+b}(a+b)^{a+b}}{K(c_a+c_{a+1}+\ldots+c_{a+b})^{a+b}}. \end{equation}

We claim that
\begin{equation} \label{claim} c_1+c_2+\ldots+c_{a-1} \leq (a-1) (c_a+c_{a+1}+\ldots + c_{a+b}).\end{equation}
In order to prove this assertion it is enough to show that
$$\max_{1\leq i\leq a+b} c_i=\max_{a\leq i\leq a+b} c_i.$$
Recall that the matrix $M$ is a finite product of matrices $M_{\pi}$ whose column vectors are described by (\ref{columns}). We will proceed by induction on the number of matrices involved in the product. If $M=M_{\pi_1}$ is just one of the elementary matrices, the claim follows directly from (\ref{columns}). Now suppose that our inequality is true for any product of elementary matrices up to the length $n$ and let $M=M_{\pi_1}M_{\pi_2}\cdots M_{\pi_n} M_{\pi_{n+1}}$.  Let $c_i^{(n+1)}$ be the sum of terms over the $i$th column of $M$ and $c_i^{(n)}$ the sum over the $i$th column of $M_{\pi_1}M_{\pi_2}\cdots M_{\pi_n}$. Once again from (\ref{columns}), we get that there exists $a\leq i\leq a+b$, such that
$$ c_i^{(n+1)}=c_a^{(n)}+c_{a+1}^{(n)}+\ldots+ c_{a+b}^{(n)}\geq \max_{a\leq k\leq a+b}c_k^{(n)}=\max_{1\leq k\leq a+b}c_k^{(n)}.$$ Moreover, for every $j\neq i$ there exists $1\leq k\leq a+b$ such that $c_j^{(n+1)}=c_k^{(n)}$. This easily implies (\ref{claim}). 

Let us complete the proof of Proposition \ref{ratio}. The inequalities (\ref{jac}) and (\ref{claim}) imply together
$$   \frac{1}{Ka^{a+b} (c_a+c_{a+1}+\ldots+c_{a+b})^{a+b}}\leq J(\tilde{P}^{-n}_M)\leq \frac{2^{a+b}(a+b)^{a+b}}{K(c_a+c_{a+1}+\ldots+c_{a+b})^{a+b}} $$
on $\tilde{\Theta}$. Thanks to this estimation we may write
$$ \frac{Leb(x\in\tilde{\Theta}_{\nu_1\star\ldots\star\nu_n}^{(n)} : \tilde{P}^n(x)\in \tilde{\Gamma})}{Leb (\tilde{\Theta}_{\nu_1\star\ldots\star\nu_n}^{(n)})} = \frac{\int_{\tilde{\Gamma}} J(\tilde{P}^{-n}_M)dx}{\int_{\tilde{\Theta}} J(\tilde{P}^{-n}_M)dx}\geq \left(\frac{1}{2a(a+b)}\right)^{a+b}\frac{Leb(\tilde{\Gamma})}{Leb(\tilde{\Theta})}.$$ 

\end{proof}


\section{Proofs of main results}

We are now ready to prove our main results.

\begin{proof}[Proof of Theorem \ref{A}] In order to show that the set $A$ is absorbing for the map $\tab$, it is enough to show that the set $\Gamma$ is absorbing for the first return map $P:\Theta\to\Theta$. We will rather work in the projected space and show that the projected set $\tilde{\Gamma}$ is absorbing for the projected first return map $\tilde{P}$. It is sufficient, since the projection map $p:\Lambda^{a+b}\to \Delta$ sends any set of positive $(a+b)$-dimensional Lebesgue measure in $\Lambda^{a+b}$ onto some set of positive $(a+b-1)$-dimensional Lebesgue measure in $\Delta$.  

We may rewrite the decomposition (\ref{partition-n}) of $\Theta$ in the projected version
$$\tilde{\Theta}=\tilde{P}^{-(n-1)}(\tilde{\Gamma})\cup \bigcup_{k_1,k_2,\ldots,k_n=1}^{\infty} \bigcup_{\nu_1\in\Pi_{a,b}^{k_1}} \ldots  \bigcup_{\nu_n\in\Pi_{a,b}^{k_n}} \tilde{\Theta}^{(n)}_{\nu_1\star\ldots\star\nu_n}.$$
From (\ref{rapport}) we deduce that 
$$Leb(\tilde{\Theta}\setminus \tilde{P}^{-n}(\tilde{\Gamma})) \leq (1-\alpha) Leb(\tilde{\Theta}\setminus \tilde{P}^{-(n-1)}(\tilde{\Gamma})),$$
for every $n\geq 1$. Since $\alpha\in(0,1)$ is a constant independent of $n$, this clearly implies that $\tilde{\Gamma}$ is absorbing for $\tilde{P}$.  \end{proof}

Combining Theorem \ref{A} and Lemma \ref{sigma+}, we prove Theorem \ref{limit} which gives a precise description of the limiting behavior of the orbits.

\begin{proof}[Proof of Theorem \ref{limit}] Fix $a\geq 1$ and $b\geq 2$. Since the set $A$ is absorbing for the map $\tab$, it is enough to show that 
$$
\lim_{k\to\infty}\tab^k(x)=(0,\ldots,0,x_{a+2}^{\infty},\ldots,x_{a+b}^{\infty}),
$$
with $x_{a+2}^{\infty}>0$ for almost every $x\in A$. From Lemma \ref{sigma+} we know that $x_{a+b}^{\infty}>0$ almost surely. If $b=2$ we are done.

If $b\geq 3$ we still have $x_{a+b}^{\infty}>0$ for almost every $x\in A$. Define the  exceptional set $E_{b-1}=\{ x\in A \ : \ x_{a+b-1}^{\infty}=0\}$ and let $x\in E_{b-1}$. It is easy to see that there exists some $k_0\geq 1$ such that for every $k\geq k_0$ we have 
$$ 
x^{(k+1)}=(T_{a,b-1}(x_1^{(k)},\ldots,x_{a+b-1}^{(k)}), x_{a+b}^{(k)}-x_a^{(k)}).
$$
In other terms, starting from some $k_0$th iteration, the last coordinate is the largest one along the whole $\tab$-orbit of $x$. Since $x\in E_{b-1}$, we get
$$
\lim_{k\to\infty} x^{(k_0+k)}=\lim_{k\to\infty} (T_{a,b-1}^{k}(x_1^{(k_0)},\ldots,x_{a+b-1}^{(k_0)}), x_{a+b}^{(k_0)}-\sum_{j=1}^{k-1}x_a^{(k_0+j)})=(0,\ldots,0,x_{a+b}^{\infty}).
$$ 

Define $\Sigma=\{x\in \Lambda^{a+b-1} \ : \ T_{a,b-1}^k(x)\to (0,\ldots,0)\}$. Once again, from Theorem \ref{A} and Lemma \ref{sigma+} we deduce that $\Sigma$ is of zero Lebesgue measure in $\Lambda^{a+b-1}$. The analysis above shows that if $x \in E_{b-1}\subset \Lambda^{a+b}$, then there exists $k_0\geq 1$ such that $(x_1^{(k_0)},\ldots,x_{a+b-1}^{(k_0)})\in\Sigma$. We may write
$$E_{b-1}\subset \bigcup_{k_0=1}^{\infty} T_{a,b}^{-k_0}(\Sigma \times \R_+).$$
Since $\tab$ restricted to any cylinder preserves Lebesgue measure, by Fubini Theorem we get that $E_{b-1}$ is a set of zero measure in $\Lambda^{a+b}$. Equivalently, for almost every $x\in \Lambda^{a+b}$ we have $x_{a+b-1}^{\infty}>0$. If $b=3$ we are done. 

If $b>3$ we may repeat the argument above to show that all the exceptional sets 
$E_{b-i}= \{ x\in A \ : \ x_{a+b-i}^{\infty}=0\}$, $i=2, \ldots, b-2,$ are null measure sets.  \end{proof} 

It is clear from Theorem \ref{limit} that for every $a\geq 1$ and $b\geq 2$ the set $D$ is absorbing for the map $\tab$. Obviously, this is also the case for the projected Schweiger map $S_{a,b}:B\to B$ defined in Section 1. 
\\

Next we will prove Theorem \ref{erg}, which together with Theorem \ref{noterg} give our partial answer to the second Schweiger conjecture. To this end, we recall a result from \cite{arnaldo1}.

\begin{thm}[\cite{arnaldo1}, Corollary 8.2] \label{euclid}
The map $T_{1,1}$  is ergodic with respect to Lebesgue measure.
\end{thm}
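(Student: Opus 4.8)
The plan is to exploit the homogeneity of $T_{1,1}$ and realise it as a skew product over the classical Farey map, whose ergodic theory is well understood. First I would introduce the projective coordinate $t=x_1/x_2\in[0,1]$ and the radial coordinate $s=x_2>0$, so that $(x_1,x_2)=(ts,s)$ and Lebesgue measure becomes $s\,dt\,ds$. A direct computation from the definition of $T_{1,1}$ (distinguishing the cases $t\le \tfrac12$ and $t>\tfrac12$, and reordering) shows that
$$T_{1,1}(t,s)=\bigl(F(t),\,s\,\psi(t)\bigr),\qquad \psi(t)=\max\{t,1-t\},$$
where $F$ is the Farey map, $F(t)=t/(1-t)$ for $t\le\tfrac12$ and $F(t)=(1-t)/t$ for $t>\tfrac12$. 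Thus $T_{1,1}$ is a multiplicative $\R_+$-cocycle extension of $F$, and the problem splits into understanding the base map $F$ and the cocycle $\log\psi$ over it.

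Next I would record the ergodicity of the base. The Farey map preserves the infinite $\sigma$-finite density $1/t$ and is conservative and ergodic: this can be obtained by accelerating $F$ (its jump transformation, replacing each maximal run of the branch on $[0,\tfrac12]$ by a single jump), which is an isomorphic copy of the Gauss map, whose ergodicity with respect to the Gauss measure $\tfrac{1}{\log 2}\tfrac{dt}{1+t}\sim\mathrm{Leb}$ is classical (Knopp's bounded-distortion and $0$--$1$ law argument). Since an $F$-invariant set restricts to a set invariant under the Gauss acceleration and, conversely, pulls back through the full Farey branches, ergodicity transfers from the Gauss map to $F$.

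The main step, and the one I expect to be the real obstacle, is lifting ergodicity from $F$ to the skew product $T_{1,1}$ on the infinite-measure cone. One cannot argue purely by homogeneity, since a priori an invariant set need not be a union of rays, and the naive attempt to write $\log\psi$ as an $F$-coboundary fails because $F$ is two-to-one. The correct route is Schmidt's theory of cocycles: the extension $T_{1,1}$ is ergodic if and only if the additive cocycle $c=\log\psi$ over $(F,\mathrm{Leb})$ has full group of essential values $E(c)=\R$. Recall that $a\in E(c)$ means that for every positive-measure $C\subset[0,1]$ and every $\varepsilon>0$ there exist arbitrarily large $n$ with $\mathrm{Leb}\bigl(C\cap F^{-n}C\cap\{|S_n-a|<\varepsilon\}\bigr)>0$, where $S_n=\sum_{k=0}^{n-1}c\circ F^{k}=\log\bigl(x_2^{(n)}/x_2\bigr)$. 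I would verify $E(c)=\R$ by constructing, for a prescribed $a$ and a Gauss cylinder $C$, return words whose weight $S_n$ is tuned to within $\varepsilon$ of $a$; the fact that $c$ takes a whole interval of values together with the mixing of the Gauss acceleration permits such fine tuning, so $E(c)$ is a closed subgroup of $\R$ with points arbitrarily close to every real, hence $E(c)=\R$. Combined with the ergodicity of $F$, this forces every $T_{1,1}$-invariant measurable function to be constant. A more structural alternative would be to identify $T_{1,1}$ with a cross-section map of the geodesic flow on $PSL(2,\Z)\backslash PSL(2,\R)$ (equivalently, to use the natural extension of the continued fraction algorithm and the ergodicity of the linear $SL(2,\Z)$-action on $(\R^2,\mathrm{Leb})$) and to deduce ergodicity of $T_{1,1}$ from that of the flow; I expect the essential-value computation to remain the delicate point in either approach.
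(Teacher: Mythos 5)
First, a remark on the ground truth: the paper contains \emph{no proof} of this statement. Theorem \ref{euclid} is imported from \cite{arnaldo1} (Corollary 8.2 there) and used as a black box in the proof of Theorem \ref{erg}. So your proposal must stand on its own — and it does not, because its central step is not merely unproved but false. Your reduction is fine as far as it goes: in the coordinates $(t,s)=(x_1/x_2,x_2)$ one indeed has $T_{1,1}(t,s)=(F(t),s\psi(t))$ with $F$ the Farey map and $\psi(t)=\max\{t,1-t\}$, Lebesgue measure on the cone is equivalent to $dt\times ds/s$, and the ergodicity of $F$ for the Lebesgue measure class (via its Gauss jump transformation) is classical. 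The gap is the claimed equivalence ``$T_{1,1}$ ergodic $\Leftrightarrow E(\log\psi)=\R$''. Schmidt's essential-value criterion characterizes \emph{conservative} ergodic skew products; it is developed for invertible nonsingular systems, where on a nonatomic space ergodicity forces conservativity. Here the skew product is \emph{totally dissipative}: $c=\log\psi$ takes values in $[-\log 2,0)$, so the Birkhoff sums $S_nc$ are strictly decreasing, and in fact $S_nc\to-\infty$ almost everywhere — this is exactly the statement that almost every $T_{1,1}$-orbit converges to the origin, which the paper proves in Section 4 (its first lemma there, with $a=b=1$). Consequently $E(c)=\R$ is impossible. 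Concretely, take $C=\{2/5\le t\le 3/5\}$: on $C$ one has $c\le\log(3/5)<-1/2$, hence $S_nc\le\log(3/5)$ for every $n\ge1$, so $\mathrm{Leb}\bigl(C\cap F^{-n}C\cap\{|S_nc|<1/2\}\bigr)=0$ for all $n\ge1$, and $0\notin E(c)$; in fact $E(c)=\emptyset$. The computation you propose to carry out cannot succeed, and the criterion you rely on, were it applicable, would yield that $T_{1,1}$ is \emph{not} ergodic — the opposite of the theorem.

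What your plan misses is that $T_{1,1}$ is ergodic \emph{and} totally dissipative at the same time (the paper asserts precisely this, for the conjugate map $S_{1,2}$, in Theorem \ref{erg}). This phenomenon is possible only because $T_{1,1}$ is non-invertible, and it is invisible to essential values, which detect only recurrent behaviour. Your ``structural alternative'' is the route that actually works, and is in substance what the cited reference does: measurable $T_{1,1}$-invariant sets correspond, after unfolding the cone $\Lambda^2$ by the group generated by the inverse branches, to Lebesgue-measurable subsets of $\R^2$ invariant under the linear action of $SL(2,\Z)$, and the classical ergodicity of that action (provable by duality with the geodesic flow on the modular surface, or by Hedlund/Moore-type arguments) gives the theorem. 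But contrary to your closing sentence, no essential-value computation remains in that approach — nor could it, since this cocycle has no finite essential values at all.
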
 

We use it to show that $S_{1,2}:B\to B$ is ergodic.

\begin{proof}[Proof of Theorem \ref{erg}] Recall that the transformation $S_{1,2}$ is defined as a projection of the map $T_{1,2}:\Lambda^{3}\to\Lambda^{3}$ on the set $B=\{x\in\Lambda^{2} \ : \  x_{2}\leq 1\}$. On the projected set $D=\{ x\in B \ : \ x_1+x_{2}\leq 1\}$, which is absorbing for $S_{1,2}$, we have $T_{1,2}(x)=(T_{1,1}(x_1,x_{2}), x_{3}-x_1)$. Thus
$$S_{1,2}(x)=\frac{1}{1-x_1}T_{1,1}(x).$$
Moreover, as $k\to\infty$, for almost every $x\in D$ we have $T_{1,2}^k(x)\to (0,0,x_{3}^{\infty})$ where $x_{3}^{\infty}>0$. This implies that for almost every $x\in D$ the corresponding $S_{1,2}$-orbit converges to the origin. We get that $S_{1,2}$ is totally dissipative with respect to Lebesgue measure. 

Now consider the map $\phi:\Lambda^2 \to D$ defined by  
$$\phi(x_1,x_2)=\left(\frac{x_1}{1+x_1+x_2},\frac{x_2}{1+x_1+x_2}\right).$$ It is invertible and sends $\Lambda^2$ onto the set $\{x\in D, x_1+x_2<1\}$ of full Lebesgue measure in $D$. Moreover, $\phi$ and $\phi^{-1}$ send null measure sets to null measure sets. One may check that 
$$T_{1,1} = \phi^{-1}\circ S_{1,2}\circ \phi.$$
This together with Theorem \ref{euclid} imply the ergodicity of $S_{1,2}$ on the set $D$. Since $D$ is almost surely absorbing, this also implies the ergodicity on the larger set $B$.   \end{proof}

Just as the map $S_{1,2}$ is related to the Euclidean algorithm, for $a\geq 2$ the dynamics of the transformation $S_{a,2}:D\to D$ is related to the Brun algorithm. Indeed, the subset $\Delta= \{x\in D : \sigma(x)=1\}$ is invariant under $S_{a,2}$ and its dynamics restricted to this set coincides with the projection of the Brun algorithm $T_{a,1}$. Our transformation $S_{a,2}$ may be thus seen as an extension of the ergodic dynamics of the Brun algorithm on $\Delta$ to the larger set $\{x : \sigma(x)\leq 1\}$. It is thus natural to conjecture that its dynamics is ergodic also in the case $a\geq 2$.

To show that $S_{a,b}:B\to B$ is not ergodic for $b\geq 3$ we use the limiting behavior of orbits to exhibit an invariant nonconstant function.  

\begin{proof}[Proof of Theorem \ref{noterg}] For every $x\in B$ we have 
$$\lim_{k\to\infty} S_{a,b}^k(x)=(x_1^{\infty},\ldots, x_{a+b-1}^{\infty}).$$
Define $f:B\to \R$ by $f(x)=x_{a+b-1}^{\infty}$. It is measurable and $S_{a,b}$-invariant. For almost every $x\in D$, an argument similar to one used in the proof of Lemma 4.7 gives that 
$$x_{a+b-1}^{\infty}=\frac{x_{a+b-1}-(x_1+\ldots+x_{a+1})}{1-(x_1+\ldots+x_{a+1})}.$$
This shows that $f$ is nonconstant. The map $S_{a,b}$ is not ergodic for $b\geq 3$. \end{proof} 


\section{Other subtractive algorithms} Fix $a\geq 2$, $b\geq 1$ and $1\leq i\leq a-1$.  We may alter slightly the definition of the transformation $\tab$ by subtracting the coordinate $x_i$ instead of $x_a$. The new transformation $T:\Lambda^{a+b}\to\Lambda^{a+b}$ is given by the formula
$$ T(x_1,x_2,\ldots,x_{a+b})=\pi(x_1,\ldots,x_a,x_{a+1}-x_i,\ldots,x_{a+b}-x_i),$$
where $\pi$ is a permutation which arranges the coordinates in ascending order. This new family of transformations also contains some well studied ones. For example, if $i=1$ and $b=1$ we get the Selmer algorithm. 

We may remark the following properties of the set $A$ with respect to the transformation $T$ defined above.

\begin{lemma} 
\begin{itemize}
\item[(1)] For every $a\geq 2$, $b\geq 1$ and $1\leq i \leq a-1$ we have $T(A)\subset A$.
\item [(2)] If $b \leq a+1-i$ then $T^{-1}(A)\subset A$.
\end{itemize}
\end{lemma}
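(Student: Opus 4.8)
The plan is to reduce both parts to two elementary identities describing how $T$ acts on the quantities defining $A$. Writing $y=(x_1,\ldots,x_a,x_{a+1}-x_i,\ldots,x_{a+b}-x_i)$ for the unsorted image, the permutation $\pi$ only reorders coordinates, so the coordinate sum is preserved and
$$\sigma(T(x))=\sigma(y)=\sigma(x)-bx_i,$$
while the last (largest) coordinate of $T(x)$ is
$$(T(x))_{a+b}=\max\{x_a,\ x_{a+b}-x_i\},$$
since $x_a$ is the largest untouched coordinate and $x_{a+b}-x_i$ the largest modified one (all modified coordinates are nonnegative because $i\le a-1$ forces $x_i\le x_a\le x_{a+1}$). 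Both parts then reduce to comparing $\sigma(x)-bx_i$ with $b\max\{x_a,x_{a+b}-x_i\}$, and in each I would split according to which term realises the maximum.

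For part (1) I would take $x\in A$, so $\sigma(x)\le bx_{a+b}$, and show $\sigma(T(x))\le b(T(x))_{a+b}$. If the maximum is $x_{a+b}-x_i$, the target $\sigma(x)-bx_i\le b(x_{a+b}-x_i)$ is exactly the hypothesis $\sigma(x)\le bx_{a+b}$. If instead the maximum is $x_a$, then $x_{a+b}\le x_a+x_i$, whence $\sigma(x)\le bx_{a+b}\le b(x_a+x_i)$, which rearranges to $\sigma(x)-bx_i\le bx_a$, as required. Thus $T(A)\subset A$ holds for all admissible $a,b,i$, with no extra hypothesis.

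For part (2) I would argue by contraposition: assuming $\sigma(x)>bx_{a+b}$ I want $\sigma(T(x))>b(T(x))_{a+b}$. When the maximum is $x_{a+b}-x_i$ the inequality again collapses to the hypothesis. The substantial case is when the maximum is $x_a$, i.e.\ $x_{a+b}<x_a+x_i$; here the target $\sigma(x)>b(x_a+x_i)$ does not follow from $\sigma(x)>bx_{a+b}$ alone, and this is where $b\le a+1-i$ enters. The idea is to bound $\sigma(x)$ from below directly using the ordering $x_1\le\cdots\le x_{a+b}$: the $a-i+1$ coordinates $x_i,\ldots,x_a$ are each at least $x_i$, and since $a-i+1\ge b$ their sum is at least $bx_i$; the $b$ coordinates $x_{a+1},\ldots,x_{a+b}$ are each at least $x_a$, so their sum is at least $bx_a$. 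Adding these two disjoint blocks gives $\sigma(x)\ge b(x_a+x_i)$, which is exactly what is needed.

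The main obstacle is precisely this last case: isolating why $b\le a+1-i$ is the right hypothesis (it is what guarantees at least $b$ coordinates lie at or to the right of index $i$, enough to absorb the $bx_i$ term) and arranging the two coordinate blocks $\{x_i,\ldots,x_a\}$ and $\{x_{a+1},\ldots,x_{a+b}\}$ so they do not overlap. A minor subtlety is that the block estimate yields $\sigma(x)\ge b(x_a+x_i)$ with equality only on the Lebesgue-null set where $x_1=\cdots=x_{i-1}=0$ and $x_i=\cdots=x_{a+b}$; off this set the inequality is strict, so $T^{-1}(A)\subset A$ holds up to a null boundary, which is all the subsequent metric arguments require.
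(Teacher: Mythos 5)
Your proposal is correct and follows essentially the same route as the paper: part (1) reduces to $b\max\{x_a,x_{a+b}-x_i\}\geq b(x_{a+b}-x_i)\geq\sigma(x)-bx_i=\sigma(T(x))$, and part (2) rests on exactly the paper's two block estimates $\sum_{j=a+1}^{a+b}x_j\geq bx_a$ and $\sum_{j=i}^{a}x_j\geq(a-i+1)x_i\geq bx_i$, which is where $b\leq a+1-i$ enters; your case split over which term realises the maximum is only a cosmetic reorganisation of the paper's argument, which bounds both terms at once. Your remark about the non-strict inequality on a null boundary set is a point the paper itself glosses over, and it does not affect the subsequent measure-theoretic use of the lemma.
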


\begin{proof}
\begin{itemize}
\item[(1)] Let $x\in A$ and $y=T(x)$. We have
$$by_{a+b} = b\max\{x_a,x_{a+b}-x_i\} \geq b(x_{a+b}-x_i) \geq  \sigma(x)-bx_i = \sigma(y).$$
This implies $y\in A$. 

\item[(2)] Let $x\in ^cA$ and $y=T(x)$. Suppose that $b \leq a+1-i$ . We have 
$$
b(x_{a+b}-x_i)\leq \sigma(x)-bx_i=\sigma(y)
$$ 
and 
$$
bx_a\leq \sum_{j=a+1}^{a+b}x_j\leq \sum_{j=1}^{i-1}x_j+\sum_{j=i}^{a}(x_j-x_i)+\sum_{j=a+1}^{a+b}x_j\leq \sum_{j=1}^{a+b}x_j -bx_i=\sigma(y).
$$ 
Together this gives $by_{a+b}=b\max\{x_a,x_{a+b}-x_i\}\leq \sigma(y)$.
\end{itemize}

\end{proof}

In virtue of the last lemma, when looking for the globally absorbing set one has to assume that $$b\geq a+3-i.$$ Under this assumption, we may ask if $A$ is absorbing for the new algorithm $T$. The difficulty of this question comes from the fact that the cylinders for the map $T$ are not full as it was the case for $\tab$. This is the main reason why the argument developed in Section 5 does not work in this case. We are no longer able to estimate the proportion of points absorbed by $A$ from every cylinder. 
  
However, it is rather straightforward to check that the arguments presented in Section 3, used to study limiting properties of orbits, are still valid for the new transformation $T$. 

\begin{prop} Let $a\geq 2$, $b\geq 1$ and $1\leq i\leq a-1$. 

\begin{itemize}
\item[(1)] For almost every $x\in \Lambda^{a+b}$ the corresponding orbit converges to $(x_1^{\infty}, x_{2}^{\infty},\ldots,x_{a+b}^{\infty})$ with $x_1^{\infty}=\ldots=x_{a+1}^{\infty}=0$. 
\item[(2)] For almost every $x\in A$ we have $x_{a+b}^{\infty}>0$.
\item[(3)] For almost every $x\in\Lambda^{a+b}$ whose orbit does not converge to the origin, there exists $k\geq 1$ such that $T^k(x)\in A$.
\end{itemize}
\end{prop}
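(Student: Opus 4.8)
The plan is to establish the three assertions as exact counterparts of the three limiting results proved above (the first lemma of Section~4, Lemma~\ref{sigma+} and Proposition~\ref{origin}), with the coordinate $x_a$ systematically replaced by the subtracted coordinate $x_i$. First I would record the two structural facts that survive the change unchanged. Since the new values produced at each step are the untouched block $x_1,\dots,x_a$ together with the shifts $x_{a+1}-x_i,\dots,x_{a+b}-x_i$, every new value is dominated by a corresponding old value (we subtract the nonnegative quantity $x_i$); hence each order statistic $x_j^{(k)}$ is nonincreasing in $k$ and the limits $x_j^\infty$ exist and remain ordered. Likewise $\sigma(x^{(k+1)})=\sigma(x^{(k)})-b\,x_i^{(k)}$, so $\sigma(x^{(k)})$ is nonincreasing and bounded below; the series $\sum_k x_i^{(k)}$ therefore converges, $x_i^{(k)}\to 0$, and this already yields $x_1^\infty=\dots=x_i^\infty=0$ (the analogue of Lemma~\ref{sum}).

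The heart of assertion~(1), and the step where the naive transcription of the Section~4 argument fails, is the passage from $x_i^\infty=0$ up to $x_{a+1}^\infty=0$. In the original map one exploited that the smallest shift $x_{a+1}^{(k)}-x_a^{(k)}$ tends to $0$; here the smallest shift is $x_{a+1}^{(k)}-x_i^{(k)}$, which tends to $x_{a+1}^\infty$ and need not vanish, so that mechanism is unavailable. Instead I would argue by contradiction, using that $x_i$ is a low coordinate. Suppose $x_{a+1}^\infty=\alpha>0$. Then for all large $k$ we have $x_{a+1}^{(k)}>\alpha/2$ while $x_i^{(k)}<\alpha/4$, so the smallest shift $x_{a+1}^{(k)}-x_i^{(k)}$ — and a fortiori every shift — strictly exceeds $x_i^{(k)}$. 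Consequently the $i$ smallest entries of the new multiset are exactly $x_1^{(k)},\dots,x_i^{(k)}$, i.e. $x_j^{(k+1)}=x_j^{(k)}$ for $j\le i$. Thus $x_i^{(k)}$ would be eventually constant and positive (rational independence keeps all coordinates positive), contradicting $x_i^{(k)}\to 0$. Hence $x_{a+1}^\infty=0$, and the ordering forces $x_1^\infty=\dots=x_{a+1}^\infty=0$. This freezing argument is the main obstacle; everything else is bookkeeping.

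For assertion~(2) I would follow Lemma~\ref{sigma+} line by line. From $x_{a+b}^{(k+1)}=\max\{x_a^{(k)},x_{a+b}^{(k)}-x_i^{(k)}\}\ge x_{a+b}^{(k)}-x_i^{(k)}$ one obtains $x_{a+b}^{(k)}\ge x_{a+b}-\sum_{j=0}^{k-1}x_i^{(j)}$. If a point $x\in A$ satisfied $x^{(k)}\to 0$, then $\sigma(x^{(k)})\to 0$ would give $\sigma(x)=b\sum_{k}x_i^{(k)}$, while $x\in A$ gives $x_{a+b}\ge\sigma(x)/b=\sum_k x_i^{(k)}$; the displayed inequality then squeezes $x_{a+b}=\sum_k x_i^{(k)}=\sigma(x)/b$, which is the null condition $\sigma(x)=b x_{a+b}$. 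Hence for almost every $x\in A$ the orbit does not tend to the origin, i.e. $x_{a+b}^\infty>0$. Finally, assertion~(3) is obtained by transcribing Proposition~\ref{origin} verbatim: that argument never refers to the subtracted coordinate, using only that the coordinates below the first nonvanishing limit $x_{a+m}^\infty$ (with $m=\min\{\,l:x_{a+l}^\infty\neq 0\,\}$) tend to $0$ — which assertion~(1) and the minimality of $m$ guarantee — together with the ordering and the defining inequality $\sigma\le b x_{a+b}$ of $A$; the same averaging estimate then produces a $k$ with $T^k(x)\in A$. (For $b=1$ both (2) and (3) are vacuous, since $A$ is then a null set and, by (1), every orbit converges to the origin.)
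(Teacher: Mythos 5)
Your proof is correct, but it is worth noting that on this point the paper itself offers no argument at all: it merely asserts that the limiting-behaviour arguments of Section~4 ``are still valid'' for the modified map $T$. Your treatment of assertions (2) and (3) confirms this is so for Lemma~\ref{sigma+} and Proposition~\ref{origin}, which indeed transcribe verbatim with $x_a$ replaced by $x_i$ (the latter never mentions the subtracted coordinate). For assertion (1), however, you correctly identify that the transcription is \emph{not} straightforward: the proof of the first lemma of Section~4 rests on the fact that the smallest shift $x_{a+1}^{(k)}-x_a^{(k)}$ tends to zero, which is obtained from $x_a^{\infty}=x_{a+1}^{\infty}$ via the repeated-subtraction argument; for the modified map the smallest shift is $x_{a+1}^{(k)}-x_i^{(k)}\to x_{a+1}^{\infty}$, which is exactly the quantity one wishes to prove vanishes, so that mechanism is circular here. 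Your replacement is sound: the identity $\sigma(x^{(k+1)})=\sigma(x^{(k)})-b\,x_i^{(k)}$ forces $\sum_k x_i^{(k)}<\infty$, hence $x_i^{(k)}\to 0$; and if $x_{a+1}^{\infty}=\alpha>0$ then for all large $k$ every shift exceeds $x_i^{(k)}$ (since $x_{a+1}^{(k)}\geq\alpha$ while $x_i^{(k)}<\alpha/2$), so the $i$ smallest coordinates freeze, and rational independence (a full-measure condition guaranteeing all iterates have positive coordinates) makes the frozen value positive, contradicting $x_i^{(k)}\to 0$. This ``freezing'' dichotomy is the genuinely new ingredient needed to justify the paper's claim, and as a bonus it specializes (taking $i=a$) to a shorter proof of the original lemma for $T_{a,b}$ itself, bypassing the $m$-step subtraction argument entirely.
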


\bibliographystyle{plain}

\bibliography{hsa3}

\begin{thebibliography}{10}

\bibitem{avila}
A.~Avila and G.~Forni.
\newblock Weak mixing for interval exchange transformations and translation
  flows.
\newblock {\em Ann. of Math. (2)}, 165(2):637--664, 2007.

\bibitem{fkn}
R.~Fokkink, C.~Kraaikamp, and H.~Nakada.
\newblock On schweiger's problems on fully subtractive algorithms.
\newblock {\em to appear in Israel J. Math.}

\bibitem{kraaikamp-meester}
C.~Kraaikamp and R.~Meester.
\newblock Ergodic properties of a dynamical system arising from percolation
  theory.
\newblock {\em Ergodic Theory Dynam. Systems}, 15(4):653--661, 1995.

\bibitem{levitt}
G.~Levitt.
\newblock La dynamique des pseudogroupes de rotations.
\newblock {\em Invent. Math.}, 113(3):633--670, 1993.

\bibitem{meester-nowicki}
R.~Meester and T.~Nowicki.
\newblock Infinite clusters and critical values in two-dimensional circle
  percolation.
\newblock {\em Israel J. Math.}, 68(1):63--81, 1989.

\bibitem{arnaldo5}
A.~Messaoudi, A.~Nogueira, and F.~Schweiger.
\newblock Ergodic properties of triangle partitions.
\newblock {\em Monatsch. Math.}, 157(3):283--299, 2009.

\bibitem{arnaldo1}
A.~Nogueira.
\newblock The three-dimensional {P}oincar\'e continued fraction algorithm.
\newblock {\em Israel J. Math.}, 90(1-3):373--401, 1995.

\bibitem{arnaldo2}
A.~Nogueira and D.~Rudolph.
\newblock Topological weak-mixing of interval exchange maps.
\newblock {\em Ergodic Theory Dynam. Systems}, 17(5):1183--1209, 1997.

\bibitem{rauzy}
G.~Rauzy.
\newblock \'{E}changes d'intervalles et transformations induites.
\newblock {\em Acta Arith.}, 34(4):315--328, 1979.

\bibitem{schweiger}
F.~Schweiger.
\newblock {\em Multidimensional continued fractions}.
\newblock Oxford Science Publications. Oxford University Press, Oxford, 2000.

\bibitem{veech1}
W.~A. Veech.
\newblock Gauss measures for transformations on the space of interval exchange
  maps.
\newblock {\em Ann. of Math. (2)}, 115(1):201--242, 1982.

\bibitem{veech2}
W.~A. Veech.
\newblock The metric theory of interval exchange transformations. {III}. {T}he
  {S}ah-{A}rnoux-{F}athi invariant.
\newblock {\em Amer. J. Math.}, 106(6):1389--1422, 1984.

\end{thebibliography}

\vskip1cm
\begin{flushleft}
Tomasz Miernowski \;\; and \;\; Arnaldo Nogueira\\
Institut de Math\'ematiques de Luminy\\
163, avenue de Luminy, Case 907\\
13288 Marseille Cedex 9, France\\

\smallskip
E-mail: {\tt miernow@iml.univ-mrs.fr\;\;} and {\;\;\tt nogueira@iml.univ-mrs.fr}

\end{flushleft}

\end{document}